\newcommand{\restr}{%
  \,\raisebox{-.127ex}{\reflectbox{\rotatebox[origin=br]{-90}{$\lnot$}}}\,%
}
\theoremstyle{definition}
\newtheorem{definition}{Definition}[section]
\theoremstyle{remark}
\newtheorem{remark}[definition]{Remark}
\theoremstyle{theorem}
\newtheorem{theorem}[definition]{Theorem}
\theoremstyle{theorem}
\newtheorem{lemma}[definition]{Lemma}
\theoremstyle{remark}
\theoremstyle{theorem}
\newtheorem{corollary}[definition]{Corollary}
\theoremstyle{theorem}
\newtheorem{proposition}[definition]{Proposition}
\theoremstyle{theorem}
\numberwithin{equation}{section}
\def\Xint#1{\mathchoice
    {\XXint\displaystyle\textstyle{#1}}%
    {\XXint\textstyle\scriptstyle{#1}}%
    {\XXint\scriptstyle\scriptscriptstyle{#1}}%
    {\XXint\scriptscriptstyle\scriptscriptstyle{#1}}%
    \!\int}
\def\XXint#1#2#3{{\setbox0=\hbox{$#1{#2#3}{\int}$}
      \vcenter{\hbox{$#2#3$}}\kern-.5\wd0}}
\def\mint{\Xint-}
\definecolor{ao}{rgb}{0.0, 0.5, 0.0}
\DeclareMathOperator*{\aplim}{ap-\lim}
\DeclareMathOperator*{\aplims}{ap-\limsup}
\DeclareMathOperator*{\aplimi}{ap-\liminf}
\newcommand{\Om}{\Omega}
\newcommand{\R}{\mathbb{R}}
\newcommand{\HH}{\mathcal{H}}
\newcommand{\di}{\mathrm{d}}
\newcommand{\UUU}{\color{black}}
\newcommand{\EEE}{\color{black}}
\def\namedlabel#1#2{\begingroup
    #2%
    \def\@currentlabel{#2}%
    \phantomsection\label{#1}\endgroup
}
\title[Jump set slicing and applications]{A general criterion for jump set slicing \\ and applications}
\author[S. Almi]{Stefano Almi}
\address[Stefano Almi]{Institute of Analysis and Scientific Computing, TU Wien, Wiedner-Hauptstrasse 8-10, 1040 Vienna, Austria \& Universit\'a di Napoli Federico II, Dipartimento di Matematica e Applicazioni R. Caccioppoli, via Cintia, Monte S. Angelo, 80126 Naples, Italy. }
\email{stefano.almi@unina.it}
\author[E. Tasso]{Emanuele Tasso}
\address[Emanuele Tasso]{Institute of Analysis and Scientific Computing, TU Wien, Wiedner-Hauptstrasse 8-10, 1040 Vienna, Austria}
\email{emanuele.tasso@tuwien.ac.at}
 \subjclass[2020]{49Q20, 
 			   26B30. 
			   	   }
\keywords{One-dimensional slicing, Jump set, Integralgeometric measures, Bounded deformation, Riemannian manifolds}
\begin{document}

\maketitle

\begin{abstract}
In this paper a novel criterion for the slicing of the jump set of a function is provided, which bypasses the codimension-one and the parallelogram law techniques developed in $BD$-spaces. The approach builds upon a recent rectifiability result of integralgeometric measures and is further applied to the study of the structure of the jump set of functions with generalized bounded deformation in a Riemannian setting.
\end{abstract}

\section{Introduction}
\label{s:intro}
In the setting of free discontinuity problem and free boundary problems, a well established technique in investigating the structure of the gradient of functions $u$ is the so called \emph{slicing}. Such a technique allows to reconstruct the properties of the gradient by studying the slices of $u$ which are lower-dimensional and simpler to handle. One of the most challenging steps consists in establishing a precise relation between the one-dimensional slices of the jump set $J_u$ of $u$ and the jump of its one-dimensional slices. More precisely, given a direction $\xi \in \mathbb{S}^{n-1}$ and $y \in \xi^\bot$, one is interested in showing that the restriction of $J_u$ along the line $\ell:= \{y+t\xi : t \in \mathbb{R}\}$ coincides with the jump set of $u |_\ell$. Such a result can be obtained in a $BV$-setting via Coarea Formula (see, e.g.,~\cite[Theorem~3.108]{afp}). The $BD$-case, as well as the more general $BV^{\mathcal{A}}$-case, required a new approach mainly relying 
 on the following two facts: the rectifiability of $\Theta_u$, namely, the set of strictly positive $(n-1)$-dimensional upper density of $|Eu|$ or $|\mathcal{A}u|$, and a combination of codimension-one slicing analyisis together with the so called parallelogramm law (cf.~\cite[Formula (5.4)]{MR1480240} for the $BD$-space and \cite[Formula (30)]{arr} for the $BV^{\mathcal{A}}$-space). The proof of the rectifiability of $\Theta_u$ is originally due to Kohn \cite{Kohn} and relies on the celebrated Federer's Structure Theorem, while the relation between the jump points of the codimension-one slices and the set $\Theta_u$ has been first pointed out by Ambrosio-Coscia-Dal Maso in~\cite{MR1480240}.  

The aim of this paper is to provide a unifying slicing criterion for the jump set $J_u$ of a measurable function $u$, which also presents applications in a non-Euclidean framework. With this motivation we will not only deal with slices along straight lines, as done in \cite{MR1480240, afp, arr}, but we will rather work with one-dimensional slices along solutions~$\gamma$ of a second order ODE driven by a sufficiently smooth field $F \colon \R^{n} \times \R^{n} \to \R^{n}$ which is $2$-homogeneous in the second variable (cf.~\eqref{e:quadratic}). \EEE In order to better explain the difficulties due to our general approach let us introduce some notation. Fix $\Omega$ an open set in $\mathbb{R}^n$ and $u \colon \Omega \to \mathbb{R}^m$ a measurable function. The first novelty of our work is the notion of \emph{families of curvilinear projections} $(P_\xi)_{\xi \in \mathbb{S}^{n-1}}$ where $P_\xi \colon \Omega \to \xi^\bot$ (cf.~Definition~\ref{d:CP}). The main feature of
 such a family is that it satisfies a transversality property in the sense of Definition~\ref{d:transversal} and it admits a parametrization $\varphi \colon \{(y+t\xi,\xi) \in \mathbb{R}^n \times \mathbb{S}^{n-1} \ : \ (y,t) \in [\xi^\bot \cap \mathrm{B}_{\rho}(0)] \times (-\tau,\tau) \} \to \mathbb{R}^n$ (cf.~Definition~\ref{d:param}). In particular, the level sets~$P_\xi^{-1}(y)$ are the images of $t \mapsto \varphi_\xi(y + t\xi):= \varphi((y+t\xi,\xi))$ \UUU solutions \EEE of
\begin{equation}
\label{e:ODE}
\ddot{\gamma} = F(\gamma,\dot{\gamma}).
\end{equation}
For $E \in \Omega$, $\xi \in \mathbb{S}^{n-1}$, and $y \in \xi^\bot$, we define the one-dimensional slices 
\begin{align}
\label{e:int1}
 E^{\xi}_{y}  &:= \{ t \in \R: \gamma(t) \in E\}\,,\\
\label{e:int2}
\hat{u}^{\xi}_{y} (t)  &:= u(\gamma(t)) \cdot g(\gamma(t),\dot{\gamma}(t)) \ \ \text{for $t \in \Om^{\xi}_{y}$}\,,
\end{align}
where $\gamma(t)=\varphi_\xi(y+t\xi)$ and $g \colon \Omega \times \mathbb{R}^n \to \mathbb{R}^m$ is a given continuous map. For later use we further denote $\dot{\varphi}_\xi(y+t\xi)$ the derivative in time of $t \mapsto \varphi_\xi(y + t\xi)$ and the velocity field $\xi_\varphi \colon \Omega \to \mathbb{R}^n$ as 
\[
\xi_\varphi(x):= \dot{\varphi}_\xi(P_\xi(x)+t_x \xi),
\]
where $t_x$ is the unique time satisfying $x= \varphi_\xi(P_\xi(x)+t_x \xi)$. We notice that with the choice
\begin{align}
    \label{e:int1100}
 F(x,v) & := - \bigg( \sum_{i,j=1}^n\Gamma^1_{ij}(x)v_iv_j,\dotsc,\sum_{i,j=1}^n\Gamma^n_{ij}(x)v_iv_j \bigg)  \qquad (x,v) \in \Omega \times \mathbb{R}^n\,,
\end{align}
the solutions $\gamma$ of \eqref{e:ODE} are the geodesics in coordinates of a Riemannian manifold with Christoffel symbols equal to $\Gamma^\ell_{ij}$.

The non-linear nature of our setting leads to a crucial point, which is the lack of symmetry required to exploit the parallelogram law technique and the possibility to perform codimension-one slicing on which the $BD$ and $BV^{\mathcal{A}}$ theories hinge. In order to overcome this difficulty, we develop a new approach which circumvents the rectifiability of $\Theta_u$ as well as Ambrosio-Coscia-Dal Maso's argument. At the best of our knowledge, this is the first time in which one dimensional slices involving projections on non-constant vector fields are considered.

The two fundamental conditions,  \UUU upon \EEE which our slicing criterion is based, read as follows:
\begin{enumerate}[label=(\roman*)]
\item the finiteness of a suitable measure $\mathscr{I}_{u,p}$ related to the jump points of $\hat{u}^\xi_y$;
\item a control on the size of the set of points $x \in \Omega$ around which certain \emph{oscillations} of $u$ do not vanish.
\end{enumerate}
For $1 \leq p \leq \infty$ the measure $\mathscr{I}_{u,p}$ is defined by a Carath\'eodory's contruction \UUU \cite[Section~2.10.1]{fed}. \EEE Namely, we set
\begin{equation*}
   \eta_\xi(B) := \int_{\xi^\bot} \sum_{ t \in B^\xi_y } \big( |[\hat{u}^\xi_y(t)]| \wedge 1 \big) \, \di \mathcal{H}^{n-1}(y) \qquad  B \in \mathcal{B}(\Omega),
   \end{equation*}
   where $[\hat{u}^\xi_y(t)]$ denotes the jump of $\hat{u}^\xi_y$ at $t$, $\mathcal{B}(\Omega)$ denotes the Borel $\sigma$-algebra of $\Omega$, and we use as a Gauge function $\zeta_p \colon \mathcal{B}(\Omega) \to [0,\infty]$ the $L^p$-norm of the map $\xi \mapsto \eta_\xi(B)$ with respect to the $(n-1)$-dimensional Hausdorff measure $\mathcal{H}^{n-1}$. As for (ii), for $\rho>0$ we define the set $\text{Osc}_u (\rho)$ (see also Definitions~\ref{d:oscillation} and~\ref{d:oscillation2}) as those points $x \in \Omega$ such that
\[
   \limsup_{r \searrow 0} \int_{\mathbb{S}^{n-1}} \inf_{\{\theta: {\rm Lip}(\theta) \leq 1\}} \bigg(\int_{-\rho/4}^{\rho/4} |\mathring{u}^\xi_x(rt) -\theta (t) |t^{n-1} \,  \di t \bigg) \di  \mathcal{H}^{n-1}(\xi) >0\,,
\]
where $\mathring{u}^\xi_x$ is given by \eqref{e:int2} with $\gamma$ being the unique solution of 
\begin{equation}
\label{e:int1000}
\begin{cases}
\ddot{\gamma} =F(\gamma,\dot{\gamma}), &\\
\gamma(0)=x,&\\ 
\dot{\gamma}(0)=\xi\,.&
\end{cases}
\end{equation}
We will then ask for the set $\text{Osc}_u (\rho)$ to be $\sigma$-finite with respect to the curvilinear version of Farvard's integralgeometric measure $\tilde{\mathcal{I}}^{n-1}$ (see Definition~\ref{d:Favard} for the precise definition).

With the above notation at hands we can state the main contribution of this paper.
\begin{theorem}
\label{t:int1}
Let $u \colon \Omega \to \mathbb{R}^m$ be measurable, let $F\in C^{\infty}(\mathbb{R}^n \times \mathbb{R}^n;\mathbb{R}^n)$ be $2$-homogeneous in the second variable, let $g \in C(\Omega \times \mathbb{R}^n;\mathbb{R}^m)$, and let $(P_\xi)_{\xi \in \mathbb{S}^{n-1}}$ be a family of curvilinear projections on $\Omega$. Suppose that
\begin{enumerate}
    \item  There exists $p \in (1, +\infty]$ such that $\mathscr{I}_{u,p}(\Omega) < +\infty$;   
    \item There exists $\rho>0$ such that $\emph{Osc}_u (\rho)$ is $\sigma$-finite w.r.t. to $\tilde{\mathcal{I}}^{n-1}$.
    \end{enumerate}
    Then, there exists a countably $(n-1)$-rectifiable set $R  \subseteq  \Omega$ such that
    \begin{equation}
        \label{e:int6.1}
         J_{\hat{u}^\xi_y}  \subseteq  R^\xi_y \qquad  \text{ for $\mathcal{H}^{n-1}$-a.e.~$\xi \in \mathbb{S}^{n-1}$, $\mathcal{H}^{n-1}$-a.e.~$y \in \xi^\bot$}.
    \end{equation}
    In particular, for every $x \in R$
    \begin{equation}
\label{e:mainslicepro9}
  \HH^{n-1} \big( \big\{ \xi \in \mathbb{S}^{n-1}: \, t_x^{\xi}  \in J_{\hat u^\xi_{P_\xi(x)}} \big\} \big) >0   
 \end{equation}
     \end{theorem}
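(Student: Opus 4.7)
The plan is to extract the rectifiable set $R$ directly from the statement \eqref{e:mainslicepro9} itself. Set
\[
R_0 := \bigl\{ x \in \Om : \HH^{n-1}\bigl(\{\xi \in \mathbb{S}^{n-1} : t_x^\xi \in J_{\hat u^\xi_{P_\xi(x)}} \}\bigr) > 0 \bigr\},
\]
and take $R := R_0 \cup \text{Osc}_u(\rho)$. Property \eqref{e:mainslicepro9} then holds on $R$ by construction, so two tasks remain: the slicing inclusion \eqref{e:int6.1} and the countable $(n-1)$-rectifiability of $R$.

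For the slicing inclusion, consider the set
\[
A := \bigl\{ (x,\xi) \in \Om \times \mathbb{S}^{n-1} : t_x^\xi \in J_{\hat u^\xi_{P_\xi(x)}} \bigr\}.
\]
By the very definition of $R_0$, the $\xi$-section of $A$ at every $x \notin R_0$ is $\HH^{n-1}$-null. I would then apply Fubini in the product structure $(y,t,\xi)$ induced by the parametrization $\varphi$: after the change of variables $x = \varphi_\xi(y+t\xi)$, which is legitimate by the transversality condition of Definition~\ref{d:transversal}, one concludes that for $\HH^{n-1}$-a.e.~$\xi$ and $\HH^{n-1}$-a.e.~$y \in \xi^\bot$, the set of $t \in J_{\hat u^\xi_y}$ with $\varphi_\xi(y+t\xi) \notin R_0$ is $\HH^0$-negligible, hence empty since $J_{\hat u^\xi_y}$ is at most countable. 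This gives \eqref{e:int6.1}.

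The core work lies in showing that $R$ is $\sigma$-finite with respect to the curvilinear integralgeometric measure $\tilde{\mathcal{I}}^{n-1}$, after which the recent rectifiability criterion for integralgeometric measures advertised in the introduction yields the countable $(n-1)$-rectifiability of $R$. The portion $\text{Osc}_u(\rho)$ is $\sigma$-finite by assumption (2), so the task reduces to $R_0$. Here I exploit assumption (1): the Carathéodory construction produces $\mathscr{I}_{u,p}$ from the gauge $\zeta_p = \| \eta_{\cdot}(B) \|_{L^p(\mathbb{S}^{n-1})}$, and Hölder's inequality converts this $L^p$ bound into an $L^1$-type estimate once one localizes to the positive-$\HH^{n-1}$-measure set of \emph{jumping directions} attached to each point $x \in R_0$. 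This localization furnishes covering estimates for $R_0$ that translate, via the definition of $\tilde{\mathcal{I}}^{n-1}$, into its $\sigma$-finiteness on compact subsets of $\Om$.

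The main obstacle is precisely this last step, namely quantitatively transferring the Carathéodory-type $L^p$-finiteness of $\mathscr{I}_{u,p}$ into an $L^1$-type integralgeometric finiteness on $R_0$. The exponent $p>1$ is essential: Hölder's inequality on the positive-measure set of good directions at each $x \in R_0$ is what converts $\zeta_p$ into an effective bound for the gauge of $\tilde{\mathcal{I}}^{n-1}$, whereas $p=1$ would not suffice. A second delicate point, due to the curvilinear nature of the slicing, is that the Fubini and push-forward computations must be run along the solutions of \eqref{e:ODE} rather than along straight lines, so both the transversality of $(P_\xi)_\xi$ and the smoothness of $F$ (entering through the $C^\infty$ parametrization $\varphi$) are used in an essential way to justify the required changes of variables.
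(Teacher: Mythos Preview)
Your proposal has two genuine gaps, and the order of attack is essentially reversed relative to the paper.

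\textbf{The Fubini step fails.} You observe that $\mathcal{H}^{n-1}(A_x)=0$ for $x\notin R_0$ and want to conclude, by Fubini in $(y,t,\xi)$, that for a.e.~$\xi$ and a.e.~$y$ the set $J_{\hat u^\xi_y}\setminus (R_0)^\xi_y$ is empty. But Fubini with Lebesgue measure on $\Omega$ only yields $\mathcal{L}^n(A_\xi\setminus R_0)=0$ for a.e.~$\xi$, which is vacuous since $A_\xi=J_{\hat u_\xi}$ is already Lebesgue-null. To reach a conclusion at the level of counting measure on one-dimensional fibres you need to integrate against a measure on $\Omega$ that actually \emph{charges} jump points, and the right object is $\mathscr{I}_{u,1}$ itself. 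The paper carries this out in Proposition~\ref{p:keyprop}, whose proof rests on the disintegration $\hat{\mathscr{I}}_u=(f_x\,\mathcal{H}^{n-1})\otimes\mathscr{I}_{u,1}$ of Proposition~\ref{p:fproposition}; this disintegration is where hypothesis~(1) with $p>1$ is genuinely used, not via a H\"older localization as you sketch.

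\textbf{$\sigma$-finiteness of a set does not yield its rectifiability.} Theorem~\ref{t:rectheorem} is a criterion for rectifiability of the \emph{measure} $\mathscr{I}^{n-1}_1$, granted that $\mathscr{I}^{n-1}_p$ is finite and integralgeometric in the sense of Definition~\ref{d:igm}; it says nothing about rectifiability of a set that merely happens to be $\sigma$-finite w.r.t.~$\tilde{\mathcal{I}}^{n-1}$. (Indeed, hypothesis~(2) already asserts that $\text{Osc}_u(\rho)$ is $\sigma$-finite, yet the theorem does not claim $\text{Osc}_u(\rho)$ is rectifiable.) The paper proceeds in the opposite order: Proposition~\ref{p:keyprop} together with the elementary inclusion $R_0\subseteq\text{Osc}_u(\rho)$ (this is~\eqref{e:rectiu1.1}) gives $\mathscr{I}_{u,1}(\Omega\setminus\text{Osc}_u(\rho))=0$; hypothesis~(2) then makes $\mathscr{I}_{u,p}$ integralgeometric; Theorem~\ref{t:rectheorem} produces a rectifiable $R$ with $\mathscr{I}_{u,1}(\Omega\setminus R)=0$; and Proposition~\ref{p:coincidence} converts this into $\eta_\xi(\Omega\setminus R)=0$ for a.e.~$\xi$, which is precisely~\eqref{e:int6.1}. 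A minor further slip: your choice $R=R_0\cup\text{Osc}_u(\rho)$ fails~\eqref{e:mainslicepro9} on $\text{Osc}_u(\rho)\setminus R_0$; one must intersect the rectifiable set with $R_0$, not unite.
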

The proof of Theorem \ref{t:int1} is obtained by exploiting a fundamental property of the measure $\mathscr{I}_{u,p}$. Namely, condition (1) implies that $\mathscr{I}_{u,p}$ is concentrated on points $x \in \Omega$ such that \eqref{e:mainslicepro9} holds true (cf.~Proposition \ref{p:keyprop}). The combination of such property with condition (2) allows us to apply the rectifiability criterion given in Theorem \ref{t:rectheorem} (see also \cite[Theorem 1.5]{Tas22}) and infer the $(n-1)$-rectifiability of the measure $\mathscr{I}_{u,1}$. In particular, this provides a countably $(n-1)$-rectifiable set $R$ such that \eqref{e:int6.1} holds true. 
 
 The inclusion in \eqref{e:int6.1} can be improved to an equality when $R$ is replaced by the jump set $J_\mathfrak{u}$ of $\mathfrak{u} \colon \Omega \to \mathbb{R}^m$, where $\mathfrak{u}(x):=\pi(x,u(x))$ and $\pi(x,\cdot)$ is the projection of~$\mathbb{R}^m$ onto the vector space generated by the image of $g(x,\cdot)$. The precise statement reads as follows.
 \begin{corollary}
     \label{c:int2}
     Let $u \colon \Omega \to \mathbb{R}^m$ be measurable, let $F\in C^{\infty}(\mathbb{R}^n \times \mathbb{R}^n;\mathbb{R}^n)$ be $2$-homogeneous in the second variable, let $g \in C(\Omega \times \mathbb{R}^n;\mathbb{R}^m)$ satisfy \eqref{G2}, and let $(P_\xi)_{\xi \in \mathbb{S}^{n-1}}$ be a family of curvilinear projections on $\Omega$. Suppose that 
     \begin{enumerate}
    \item There exists $p \in (1, +\infty]$ such that $\mathscr{I}_{u,p}(\Omega) < +\infty$\EEE;   
    \item There exists $\rho>0$ such that $\emph{Osc}_u(\rho)$ is $\sigma$-finite w.r.t. to $\tilde{\mathcal{I}}^{n-1}$;
    \item There exists a diffeomorphism $\tau \colon \mathbb{R} \to (-1,1)$ such that, setting $u_\xi(x):= u(x) \cdot g(x,\xi_\varphi(x))$ for $(x,\xi) \in \Om \times \mathbb{S}^{n-1}$, we have 
    \[
    D_\xi (\tau (u_\xi \circ \varphi_\xi )) \in \mathcal{M}^+_b(\varphi_\xi^{-1}(\Omega)) \qquad \text{for $\mathcal{H}^{n-1}$-a.e. $\xi \in \mathbb{S}^{n-1}$}.
    \]
    \end{enumerate}
 Then, it holds 
\begin{equation}
    \label{e:int6}
    J_{\hat{u}^\xi_y}=(J_{\mathfrak{u}})^\xi_y \qquad \text{ for $\mathcal{H}^{n-1}$-a.e.~$\xi \in \mathbb{S}^{n-1}$, $\mathcal{H}^{n-1}$-a.e.~$y \in \xi^\bot$}.
\end{equation}
\end{corollary}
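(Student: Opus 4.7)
The plan is to split \eqref{e:int6} into the two inclusions, leveraging Theorem \ref{t:int1} for the rectifiable structure and condition (3) for the missing slice regularity. Unpacking (3) via Fubini in the parametrized domain $\varphi_\xi^{-1}(\Omega)$, one finds that for $\HH^{n-1}$-a.e.\ $\xi$ and $\HH^{n-1}$-a.e.\ $y \in \xi^\bot$ the real-valued map $t \mapsto \tau(\hat{u}^\xi_y(t))$ is non-decreasing; since $\tau$ is a diffeomorphism onto $(-1,1)$, the slice $\hat{u}^\xi_y$ itself is monotone, hence admits one-sided limits at every interior point of its domain. I will also exploit the identity $\hat{u}^\xi_y(t) = \mathfrak{u}(\gamma(t)) \cdot g(\gamma(t),\dot{\gamma}(t))$, which follows from the definition of $\mathfrak{u}$ as the projection of $u$ onto $\mathrm{span}\{g(x,\eta): \eta \in \R^n\}$ together with the fact that $g(\gamma(t),\dot{\gamma}(t))$ lies in that span.

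For the inclusion $(J_\mathfrak{u})^\xi_y \subseteq J_{\hat{u}^\xi_y}$, I would fix $x \in J_\mathfrak{u}$ with approximate normal $\nu_x$ and one-sided traces $\mathfrak{u}^\pm(x)$, and first verify that for $\HH^{n-1}$-a.e.\ such $x$ and $\HH^{n-1}$-a.e.\ $\xi \in \mathbb{S}^{n-1}$ the tangent direction $\xi_\varphi(x)$ is transverse to $\nu_x^\bot$ and $g(x,\xi_\varphi(x))\cdot(\mathfrak{u}^+(x)-\mathfrak{u}^-(x))\neq 0$; the second statement should reduce to hypothesis \eqref{G2} on $g$, which is designed to make the projection $\pi(x,\cdot)$ faithful in $\HH^{n-1}$-a.e.\ direction. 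The one-sided limits of the monotone slice at the unique $t_x$ with $\gamma(t_x)=x$ are then $\mathfrak{u}^\pm(x) \cdot g(x,\xi_\varphi(x))$, and these are distinct, giving $t_x \in J_{\hat{u}^\xi_{P_\xi(x)}}$. A Fubini/transversality argument in the spirit of the proof of Theorem \ref{t:int1} then lifts this from the single curve through $x$ to $\HH^{n-1}$-a.e.\ $y \in \xi^\bot$.

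For the reverse inclusion, Theorem \ref{t:int1} already yields a countably $(n-1)$-rectifiable set $R$ with $J_{\hat{u}^\xi_y}\subseteq R^\xi_y$ for a.e.\ $(\xi,y)$, and monotonicity forces the slice to have genuine one-sided limits at each jump point. At an $\HH^{n-1}$-typical $x \in R$, a blow-up based on the approximate tangent plane of $R$, combined with the monotonicity of slices in an $\HH^{n-1}$-positive set of directions, should produce approximate one-sided traces of $\mathfrak{u}$ across that tangent plane; if the particular slice $\hat{u}^\xi_y$ jumps at $t$ with $\gamma(t)=x$, these traces cannot coincide, since otherwise the continuity of $g$ would force the two one-sided limits of $\hat{u}^\xi_y$ at $t$ to agree. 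Hence $x \in J_\mathfrak{u}$. The main obstacle I foresee is precisely this blow-up step: promoting monotone one-dimensional traces along curved slicing trajectories into genuine approximate multi-dimensional traces of $\mathfrak{u}$ on the merely rectifiable set $R$, which forces a simultaneous linearization of the family $(P_\xi)_{\xi \in \mathbb{S}^{n-1}}$ from Definition \ref{d:param} and of the map $\xi \mapsto g(x,\xi_\varphi(x))$ at $x$, combined with the transversality property of Definition \ref{d:transversal}.
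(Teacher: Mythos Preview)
Your overall architecture (two inclusions, rectifiable $R$ from Theorem~\ref{t:int1}, monotonicity of slices from~(3)) matches the paper's, but the paper avoids precisely the blow-up obstacle you flag by inserting an intermediate object and a general trace lemma. Concretely, the paper first proves $J_{\hat u^\xi_y}=(J_{u_\xi})^\xi_y$ for a.e.\ $\xi,y$ (this is Theorem~\ref{t:slicecoe}), where $u_\xi(x)=u(x)\cdot g(x,\xi_\varphi(x))$ is a scalar function on~$\Omega$; then it shows $(J_{u_\xi})^\xi_y=(J_{\mathfrak u})^\xi_y$. The first equality is obtained from Proposition~\ref{c:relje}: under hypothesis~(3), for \emph{any} countably $(n-1)$-rectifiable $R\subseteq\Omega$ and $\HH^{n-1}$-a.e.\ $x\in R$, the one-sided $n$-dimensional traces of $u_\xi$ across $\nu_R(x)$ coincide with the one-sided limits of the slice $\hat u^\xi_{P_\xi(x)}$ at $t_x$. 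This is exactly what replaces your blow-up: applied once with $R$ from Theorem~\ref{t:int1} and once with $R=J_{u_\xi}$ (rectifiable by Theorem~\ref{t:delnin}), it gives both inclusions in Theorem~\ref{t:slicecoe} without any simultaneous linearization of the $P_\xi$'s or of $g$.

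There is also a hidden gap in your ``easy'' inclusion. You assert that the one-sided limits of the monotone slice at $t_x$ are $\mathfrak u^\pm(x)\cdot g(x,\xi_\varphi(x))$; monotonicity only tells you the one-sided limits \emph{exist}, not that they equal the ambient traces. This identification is again Proposition~\ref{c:relje}, now applied to the rectifiable set $J_{\mathfrak u}$. Once you accept this lemma, the paper's argument for $J_{u_\xi}\leftrightarrow J_{\mathfrak u}$ is algebraic rather than geometric: from Proposition~\ref{p:prodmeas} and Fubini, for $\HH^{n-1}$-a.e.\ $x\in R$ one has $\HH^{n-1}(\{\xi:x\in J_{u_\xi}\})>0$; then~\eqref{G2} together with property~(4) of Definition~\ref{d:CP} produces finitely many $\xi^1,\ldots,\xi^k$ and continuous coefficients $\alpha_j$ with $\mathfrak u(z)=\sum_j\alpha_j(z)\,u(z)\cdot g(z,\xi^j_\varphi(z))$ near~$x$, whence $x\in J_{\mathfrak u}$. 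The reverse uses the same spanning trick in contrapositive. Note that your reading of~\eqref{G2} (``$g(x,\xi_\varphi(x))\cdot(\mathfrak u^+-\mathfrak u^-)\neq 0$ for a.e.\ $\xi$'') is not what~\eqref{G2} literally says; the paper instead uses~\eqref{G2} to \emph{select} a finite spanning set from any positive-measure $\Sigma$, which is strictly weaker but sufficient for the linear-combination argument.
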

 Corollary \ref{c:int2} is obtained as a consequence of \eqref{e:int6} together with the following two facts: \emph{(i)} by an adaptation of the argument in the proof of \cite[Theorem 5.2]{dal}, hypothesis~(3) of Corollary \ref{c:int2} leads to a precise relation between the traces of $u_\xi$ and the traces of its one dimensional slices $\hat{u}^\xi_y$ for $y \in \xi^\bot$ on any $(n-1)$-rectifiable set $R  \subseteq  \Omega$ (cf.~\eqref{e:corollary-relje}), and~\emph{(ii)} the jump set of any measurable function $u \colon \mathbb{R}^n \to \mathbb{R}^m$ is countably $(n-1)$-rectifiable, as it was remarkably pointed out in~\cite{del}. Combining all this ingredients in the proper way we end up with Corollary~\ref{c:int2}. We refer to Propositions~\ref{c:relje} and~\ref{p:prodmeas}. 
 
 Let us comment on the applicability of Theorem~\ref{t:int1} and Corollary~\ref{c:int2}. Conditions~(1) and~(3) \EEE  of Corollary~\ref{c:int2} are usually satisfied in spaces admitting a slicing representation. For instance, by choosing~$g$ equal to the orthogonal projection on the second component, the $BD$-theory can be recovered and property~(1) is valid with $p=\infty$. This is guaranteed from inequality $\eta_\xi(B) \leq |Eu|(B)$ valid for every $\xi \in \mathbb{S}^{n-1}$ and for every Borel set~$B$. Moreover,~(3) follows  by a control on $|D_\xi u_\xi|$. A similar argument leads to the validity of~(1) and~(3) in the $BV^{\mathcal{A}}$-framework with a different choice of $g$ dictated by the notion of spectral pairs (see, e.g.,~\cite{arr}). 
 
 The most delicate assumption is item (2) in Corollary \ref{c:int2}. In this regard we show in Section~\ref{s:applications}  that the $\sigma$-finiteness of $\text{Osc}_u (\rho) $ follows from the $\sigma$-finiteness of the set of points $x \in \Omega$ satisfying 
 \begin{align}
\label{e:voscillation20000}
       \limsup_{r \searrow 0 } \bigg(\inf_{a \in \Xi (\mathrm{B}_1(0))} \int_{\mathrm{B}_{1}(0)} |u_{r,x}-a| \wedge 1 \, \di z \bigg) >0,
\end{align} 
where $u_{r,x} \colon \mathrm{B}_1(0) \to \mathbb{R}^n$ is defined as $u_{r,x}(z):=u(x+rz)$ and, loosely speaking, the space $\Xi (\mathrm{B}_1(0))$ consists of all functions $a \in C^{\infty}(\mathrm{B}_1(0); \mathbb{R}^m)$ whose one-dimensional restrictions $a(\gamma(t)) \cdot g(\gamma(t),\dot{\gamma}(t))$ are $1$-Lipschitz continuous whenever $\gamma$ solves $\ddot{\gamma}= F(\gamma,\dot{\gamma})$. Given a first-order constant coefficients operator $\mathcal{A}$, when investigating the $\sigma$-finiteness of \eqref{e:voscillation20000} with respect to $\mathcal{H}^{n-1}$ for $u \in BV^{\mathcal{A}}$ one can make use of Poincar\'e type of inequalities (cf.~\cite[Theorem~3.1]{MR1480240} and \cite{Gme19}) and appeal to the standard density estimates for Radon measures (cf.~\cite{afp}). 
This implies in particular the (weaker) $\sigma$-finiteness required in (2). 

In Section \ref{s:GBD} we apply the above strategy to a novel space of functions with generalised bounded deformation $GDB_F(\Om)$ containing those maps $u$ whose one-dimensional slices~$t \mapsto u (\gamma(t)) \cdot \dot{\gamma} (t)$ have bounded variation when computed on solutions~$\gamma$ of the ODE \eqref{e:int1000}. Inspired by \cite{dal}, we say that $u \in GDB_F(\Om)$ if there exists $\lambda \in \mathcal{M}^+_b(\Om)$ such that for every Borel subset $B\subseteq \Om$ and every curvilinear projection $P_\xi \colon \Om \to \xi^\bot$
\begin{align*}
\int_{\xi^{\bot}} \Big( |{\rm D} \hat{u}^{\xi}_{y} | (B^{\xi}_{y} \setminus J^{1}_{\hat{u}^{\xi}_{y}}) & + \HH^{0} (B^{\xi}_{y} \cap J^{1}_{\hat{u}^{\xi}_{y} } ) \Big)\, \di \HH^{n-1}(y) 
\leq \|\dot{\varphi}_\xi\|^2_{L^\infty}{\rm Lip}(P_{\xi};\Om)^{n-1} \lambda(B)\,,
\end{align*}
where $J^1_{\hat{u}^{\xi}_{y}}:= \{t \in \Om^\xi_y : \, |[\hat{u}^{\xi}_y (t)]| > 1 \}$ (see Definition \ref{d:GBD}). In order to conclude the $\sigma$-finiteness of \eqref{e:voscillation20000}, we show in Theorem \ref{t:poincare} a weak Poincar\'e inequality involving functions in $\Xi ({\rm B}_{1}(0))$ and the measure $\lambda$. To this purpose, we have to make a stronger assumption on the field~$F$. Namely, we suppose $F$ to satisfy a condition which we call \emph{Rigid Interpolation} \eqref{hp:F2}. Such a condition requires a (local) control on the $L^\infty$-norm of the curvilinear symmetric gradient, seen as an operator acting on smooth vector fields, in terms of a discrete semi-norm which takes into account the values of vector fields on the vertices of $n$-dimensional simplexes of $\mathbb{R}^n$ (see \eqref{e:rip5}). In the Riemmannian setting, where~$F$ takes the form~\eqref{e:int1100}, in order to ensure the validity of~\eqref{hp:F2} we have to perform a careful analysis based on a blow-up argument regarding Christoffel symbols around a point, allowing for a (local) control of the kernel of the curvilinear symmetric gradient (see Section \ref{sub:Riemann}). This analysis will also serve as a starting point for investigating further structural properties of the space $GBD_F(\Om)$ as well as compactness and lower semicontinuity issues.  This will be the subject of investigation in a forthcoming paper~\cite{AT_23}.

\subsection*{Plan of the paper.} In Section~\ref{s:preliminaries} we present the basic notation and preliminaries, as well as recall the definition of integralgeometric measure and a related rectifiability criterion from~\cite{Tas22}. In Section~\ref{s:curvilinear} we discuss the notion of (family of) curvilinear projections and present its main properties. Section~\ref{s:directional} is devoted to the proofs of Theorem~\ref{t:int1} and Corollary~\ref{c:int2}. In Section~\ref{s:applications} we investigate the relation between~\eqref{e:voscillation20000} and condition~(2) of Theorem~\ref{t:int1}. Finally, in Section \ref{s:GBD} we define the space $GBD_F(\Om)$ and prove a Poincar\'e inequality in such setting (see Theorem \ref{t:poincare}), which, in turn, guarantees the applicability of Corollary \ref{c:int2}.

\section{Preliminaries and notation}
\label{s:preliminaries}

\subsection{Basic notation}
For $n , k \in \mathbb{N}$, we denote by~$\mathcal{L}^{n}$ and by~$\mathcal{H}^{k}$ the Lebesgue and the $k$-dimensional Hausdorff measure in~$\R^{n}$, respectively. The symbol $\mathbb{M}^n$ stands for the space of square matrices of order~$n$ with real coefficients, while~$\mathbb{M}^n_{sym}$ denotes its subspace of symmetric matrices. The set $\{e_{i}\}_{i=1}^{n}$ denotes the canonical basis of~$\R^{n}$ and $| \cdot|$ is the Euclidean norm on~$\R^{n}$. For every $\xi \in \R^{n}$, the map~$\pi_{\xi} \colon \R^{n} \to \R^{n}$ is the orthogonal projection over the hyperplane orthogonal to~$\xi$, which will be indicated with~$\xi^{\bot}$. For $x \in \R^{n}$ and~$\rho>0$, ${\rm B}_{\rho}(x)$ stands for the open ball of radius~$\rho$ and center~$x$ in~$\R^{n}$.

Given~$U_{j}$ a sequence of open subsets of~$\R^{n}$, $\Omega$ open subset of~$\R^{n}$, and~$f_{j} \in C^{\infty}(U_{j}; \R^{k})$, we say that $f_{j} \to f$ in~$C^{\infty}_{loc} (\Om; \R^{k})$ if~$f \in C^{\infty}(\Om; \R^{k})$, $U_{j} \nearrow \Om$, and $f_{j} \to f$ in~$C^{\infty}(W; \R^{k})$ for every $W \Subset \Om$. For a Lipschitz function  \UUU $f\colon \Om \to \R^{k}$, \EEE we denote by ${\rm Lip} (f;\Om)$ the least Lipschitz constant of~$f$ on~$\Om$. We will drop the dependence on the set whenever it is clear from the context.

Given an open subset~$U$ of~$\R^{n}$,~$\mathcal{M}_{b}(U)$ (resp.~$\mathcal{M}_{b}^{+}(U)$) is the space of bounded Radon measures on~$U$ (resp. bounded and positive Radon measures on~$U$). Given a Borel map~$\psi\colon U \to V \subseteq \R^{k}$ and a measure~$\mu \in \mathcal{M}_{b}(U)$, the push-forward measure of~$\mu$ through~$\psi$ is denoted by~$\psi_{\sharp}(\mu) \in \mathcal{M}_{b}(V)$. The set of all Borel \UUU subsets \EEE of~$U$ is indicated by~$\mathcal{B}(U)$. For every $A \subseteq \R^{n} \times \mathbb{S}^{n-1}$, every $\xi \in \mathbb{S}^{n-1}$, and every $x \in \R^{n}$ we will denote 
$$ A_{\xi} := \{ (x \in \R^{n}: (x, \xi) \in A\} \qquad A_{x} := \{ \xi \in \mathbb{S}^{n-1}: (x, \xi) \in A\}\,.$$

For every $p \in [1, +\infty]$, $L^{p}(U; \R^{k})$ stands for the space of $p$-summable functions from~$u$ with values in~$\R^{k}$. The usual $L^{p}$-norm is denoted by~$\| \cdot\|_{L^{p}(U)}$. We will drop the set~$U$ in the notation of the norm when there is no chance of misunderstanding.

\subsection{A rectifiability criterion for a class of integralgeometric measures}

This section is based on the techniques developed in \cite{Tas22}.

\begin{definition}[Countably rectifiable set]
 We say that a set $R \subseteq \Omega$ is countably $(n-1)$-rectifiable if and only if $R$ equals a countable union of images of Lipschitz maps~$(f_i)_i$ from some bounded sets $E_i \UUU \subseteq \EEE \mathbb{R}^{n-1}$ to $\Omega$. 
\end{definition}

 \begin{definition}[Rectifiable measure]
 Let $\mu$ be a measure on $\Omega$. We say that~$\mu$ is $(n-1)$-rectifiable if there exist an $(n-1)$-rectifiable set $R$ and a real-valued measurable function~$\theta$ such that
\begin{equation*}
\mu = \theta \, \mathcal{H}^{n-1} \restr R\,.
\end{equation*}
 \end{definition}


The notion of \emph{transversal family of maps} will play a fundamental role along this section. The following definition is an adaptation of \cite[Definition 2.4]{hov}. 

\begin{definition}[Transversality]
\label{d:transversal}
Let $\Omega \subseteq \mathbb{R}^n$ be open and let $S_i:=\{\xi \in \mathbb{S}^{n-1}  :  |\xi\cdot e_i| \geq 1/\sqrt{n}  \}$ for $i=1,\dotsc,n$. We say that a family of Lipschitz maps $P_\xi \colon \Omega \to \xi^\bot$ for $\xi \in \mathbb{S}^{n-1}$ is a transversal family of maps on~$\Omega$ if for every $i=1,\dotsc,n$ the maps
\begin{align*}
P^i_\xi(x) & := \pi_{e_i}\circ P_\xi(x) \qquad \text{for } \xi \in S_i, \  x \in  \Omega\,,
\\
 T^{i}_{xx'}(\xi) & := \frac{P^i_\xi(x) - P^i_\xi(x')}{|x-x'|} \qquad \text{for } \xi \in S_{i}, \ x, x' \in \Om \text{ with $x \neq x'$}
\end{align*}
satisfy the following properties:
\begin{enumerate}[label=(H.\arabic*),ref=H.\arabic*]
    \item For every $x \in \Omega$ the map $\xi \mapsto P^i_\xi(x)$ belongs to $C^2(S_i;\mathbb{R}^{n-1})$ and
    \begin{equation}
    \label{e:h1}
    \sup_{(\xi,x) \in S_i \times \Omega} |D^j_\xi P^i_\xi(x)| < \infty  \qquad \text{for }j=1,2\, ;
    \end{equation}
    \item \label{hp:H2} There exists a constant $C' >0$ such that for every $\xi \in S_i$ and $x,x' \in \Omega$ with $x \neq x'$ 
    \begin{equation}
    \label{e:h2}
        |T^{i}_{xx'}(\xi)| \leq C' \ \ \ \text{ implies } \ \ \
        |\text{J}_\xi T^{i}_{xx'}(\xi)| \geq C';
    \end{equation}
   \item \label{hp:H3} There exists a constant $C'' >0$ such that 
   \begin{equation}
   \label{e:h3}
       | D^j_\xi T^{i}_{xx'}(\xi) | \leq C''\qquad  \text{for }j=1,2\,
   \end{equation}
   for $\xi \in S_i$ and $x,x' \in \Omega$ with $x \neq x'$.
\end{enumerate}
\end{definition}

\begin{remark}\label{r:'}
Hypothesis \eqref{hp:H2} is equivalent to the following:
\begin{enumerate}[label=(H.2'), ref=H.2']
\item \label{hp:'}
There exists two constants $C'_1,C'_2 >0$ such that for every $\xi \in S_i$ and $x,x' \in \Omega$ with $x \neq x'$ 
    \begin{equation}
    \label{e:h2'}
        |T^{i}_{xx'}(\xi)| \leq C'_1 \ \ \ \text{ implies } \ \ \
        |\text{J}_\xi T^{i}_{xx'}(\xi)| \geq C'_2;
    \end{equation}
    \end{enumerate}
    Indeed ,\eqref{hp:H2} clearly implies \eqref{hp:'}. Viceversa if \eqref{hp:'} holds true with $C'_1 \leq C'_2$ we can simply replace $C'_2$ with $C'_1$ in \eqref{e:h2'} and the implication remains true. Similarly if $C'_1 > C'_2$ any triple $(\xi,x,x')$ satisfying $|T^{i}_{xx'}(\xi)| \leq C'_2$ satisfies $|T^{i}_{xx'}(\xi)| \leq C'_1$ as well, therefore implication \eqref{e:h2'} remains true by replacing $C'_1$ with $C'_2$. 
\end{remark}

\begin{remark}
\label{r:observation}
The definition of transversality above slightly differs from that in \cite{Tas22}. Indeed, it turns out that for every $i=1,\dotsc,n$ $(P^i_\xi)_{\xi \in S_i }$ is transversal in the sense of \cite[Definition 3.3]{Tas22}, while the entire family $(P_\xi)_{\xi \in \mathbb{S}^{n-1}}$ is not. However, the relevant property of transversality contained in \cite{Tas22} can be easily transferred to $(P_\xi)_{\xi \in \mathbb{S}^{n-1}}$ simply by writing $(P_\xi)_{\xi \in \mathbb{S}^{n-1}} = \bigcup_{i=1}^n (P^i_\xi)_{\xi \in S_i }$.
\end{remark}

We are now in position to introduce a curvilinear version of codimension one Farvard's integralgeometric measure.

\begin{definition}
\label{d:Favard}\UUU Let $(P_\xi)_{\xi \in \mathbb{S}^{n-1}}$ be a family of transversal maps on~$\Om$. We \EEE define the Borel regular measure $\tilde{\mathcal{I}}^{n-1}$ on $\Om$ as
\begin{align}
    \label{e:farvard1}
    \tilde{\mathcal{I}}^{n-1}(B)&:= \int_{\mathbb{S}^{n-1}} \bigg( \int_{\xi^\bot} \mathcal{H}^0(B \cap P^{-1}_\xi(y)) \, \di \mathcal{H}^{n-1}(y) \bigg) \di \mathcal{H}^{n-1}(\xi) \qquad B \in \mathcal{B}(\Omega)\,, \\
    \label{e:farvard2}
     \tilde{\mathcal{I}}^{n-1}(E) &:= \inf \{ \tilde{\mathcal{I}}^{n-1}(B) : E \subset B, \ B \in \mathcal{B}(\Omega) \}  \qquad E \UUU \subseteq \EEE \Omega\,.
\end{align}
Notice that the required measurability for \eqref{e:farvard1} can be obtained by means of the measurable projection theorem \cite[Section 2.2.13]{fed} with a similar argument as in \cite[Section 5]{Tas22}.
\end{definition}

Furthermore we need to define the following additional class of measures.

\begin{definition}
\UUU Let $(P_\xi)_{\xi \in \mathbb{S}^{n-1}}$ be a family of transversal maps on~$\Om$ and let $(\eta_\xi)_{\xi \in \mathbb{S}^{n-1}}$ be a family of Borel regular measures of~$\mathbb{R}^n$ satisfying
\begin{equation}
    \label{e:condigm2000}
    \xi \mapsto \eta_\xi(A_\xi) \text{ is $\mathcal{H}^{n-1}$-measurable  for every $A \in \mathcal{B} (  \Omega \times \mathbb{S}^{n-1} )$.}
\end{equation}
Let moreover $f_B(\xi):= \eta_\xi(B)$ for every $\xi \in \mathbb{S}^{n-1}$ and $B \in \mathcal{B}( \Omega)$. For $p \in [1, +\infty]$, \EEE we define the set function 
\begin{equation}
    \label{e:condigm1000}
    \zeta_p(B) := \|f_B  \|_{L^p(\mathbb{S}^{n-1})}.
\end{equation}
Via the classical Caratheodory's construction we define the measure
\begin{equation}
     \label{e:caratheodoryc2}
    \mathscr{I}_p^{n-1}(E) := \sup_{\delta>0} \, \inf_{G_\delta} \sum_{B \in G_\delta} \zeta_p(B),
 \end{equation}
whenever $E \subseteq \Om$ and where $G_\delta$ is the family of all countable Borel coverings of $E$ made of sets having diameter less than or equal to~$\delta$. 
\end{definition}

\begin{definition}
\UUU Let  $(P_\xi)_{\xi \in \mathbb{S}^{n-1}}$ be a family of transversal maps on~$\Om$ and let $(\eta_\xi)_{\xi \in\mathbb{S}^{n-1}}$ be as in \eqref{e:condigm2000}. We \EEE  define the set functions
 \begin{align}
     \label{e:condigm1.0.0}
     \hat{\zeta}(A) & := \int_{\mathbb{S}^{n-1}} \eta_\xi(A_\xi) \, \di \mathcal{H}^{n-1}(\xi) \qquad \text{for $A \in \mathcal{B}( \Omega \times \mathbb{S}^{n-1})$}\,,
     \\
    \label{e:caratheodoryc2.1.0}
    \hat{\mathscr{I}}_{n-1}(F) & :=  \sup_{\delta>0} \, \inf_{G'_\delta} \sum_{B \in G'_\delta} \hat{\zeta}(B) \qquad \text{for $F \subseteq \Omega \times \mathbb{S}^{n-1}$}\,,
 \end{align}
 where~$G'_\delta$ is the family of all countable Borel coverings of~$F$ made of sets having diameter less than or equal to $\delta$.
\end{definition}

We have the following general representation result for $\mathscr{I}^{n-1}_{1}$ and~$\hat{\mathscr{I}}_{n-1}$.

\begin{proposition}
\label{p:coincidence}
Under the previous assumption the measures $\mathscr{I}^{n-1}_1$ and $\hat{\mathscr{I}}_{n-1}$ satisfy
\begin{align*}
    \mathscr{I}^{n-1}_1(E) &= \inf_{\substack{E \subseteq B \\ B \in \mathcal{B}(\Om)}} \int_{\mathbb{S}^{n-1}} \eta_\xi(B) \, \di \mathcal{H}^{n-1}(\xi) \qquad \text{for every  $E \subseteq \Omega$\,,} \\
    \hat{\mathscr{I}}_{n-1}(F) &= \inf_{\substack{F \subseteq A \\ A \in \mathcal{B} ( \Om \times \mathbb{S}^{n-1}) }} \int_{\mathbb{S}^{n-1}} \eta_\xi(A_\xi) \, \di \mathcal{H}^{n-1}(\xi) \qquad  \text{for every  $F \subseteq \Omega \times \mathbb{S}^{n-1}$}.
\end{align*}
\end{proposition}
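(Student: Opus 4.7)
The strategy is to show that the two gauges $\zeta_1$ and $\hat\zeta$ are already countably subadditive (and in fact countably additive on disjoint Borel families), so that the Carath\'eodory construction collapses to the infimum over a single Borel covering. The two identities can be proved by the same argument, only the ambient space changes.

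First I would record the key additivity of the gauges. Each $\eta_\xi$ is a Borel regular measure on $\R^n$, and the measurability assumption \eqref{e:condigm2000} makes all the integrals over $\mathbb{S}^{n-1}$ meaningful. For a disjoint Borel partition $B=\bigsqcup_i B_i$ in $\Om$, countable additivity of $\eta_\xi$ together with monotone convergence yields
\[
\zeta_1(B)=\int_{\mathbb{S}^{n-1}} \eta_\xi(B)\,\di\mathcal{H}^{n-1}(\xi)=\sum_i \zeta_1(B_i),
\]
hence $\zeta_1\big(\bigcup_i B_i\big)\le \sum_i \zeta_1(B_i)$ for any countable Borel family. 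Exactly the same identity holds for $\hat\zeta$ on Borel subsets of $\Om\times\mathbb{S}^{n-1}$, since a disjoint decomposition $A=\bigsqcup_i A_i$ induces the disjoint decomposition $A_\xi=\bigsqcup_i (A_i)_\xi$ fiberwise.

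For the inequality $\mathscr{I}^{n-1}_1(E)\ge \inf_{B\supseteq E} \int_{\mathbb{S}^{n-1}}\eta_\xi(B)\,\di\mathcal{H}^{n-1}(\xi)$, I would fix $\delta>0$ and any $G_\delta$-cover $\{B_i\}$ of $E$. The union $B:=\bigcup_i B_i$ is a Borel superset of $E$, and by countable subadditivity $\zeta_1(B)\le \sum_i \zeta_1(B_i)$. Taking first the infimum over $G_\delta$ and then the supremum over $\delta$ gives the desired bound. For the reverse inequality, given any Borel $B\supseteq E$ and any $\delta>0$, I would partition $B$ into countably many Borel pieces $B_i$ of diameter at most $\delta$ by intersecting it with a dyadic grid of side $<\delta/\sqrt n$. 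The countable additivity recorded above yields $\sum_i \zeta_1(B_i)=\zeta_1(B)$, so $\inf_{G_\delta}\sum \zeta_1 \le \zeta_1(B)$ for every $\delta$; taking the supremum in $\delta$ and then the infimum over $B\supseteq E$ finishes the proof.

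The proof of the identity for $\hat{\mathscr{I}}_{n-1}$ is verbatim the same after replacing Borel subsets of $\Om$ by Borel subsets of $\Om\times\mathbb{S}^{n-1}$ and $\zeta_1$ by $\hat\zeta$: the same dyadic-type partition argument works in the product space, and the slice-wise additivity of $\hat\zeta$ guarantees that the telescoping step still gives equality. The main (mild) obstacle I anticipate is a clean Borel partition into small-diameter pieces; a standard dyadic covering handles this, so no serious difficulty is expected beyond writing the dyadic construction carefully.
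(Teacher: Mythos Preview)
Your proposal is correct. The paper states this proposition without proof, treating it as a standard consequence of the fact that for $p=1$ the gauge $\zeta_1$ (and likewise $\hat\zeta$) is itself a countably additive Borel set function, so that the Carath\'eodory construction collapses to its Borel regular extension; your argument spells out precisely this mechanism.
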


In the next two propositions we state two properties regarding the disintegration of~$\hat{\mathscr{I}}_{n-1}$ w.r.t.~$ \mathscr{I}^{n-1}_1$ and the measures~$\eta_{\xi}$. In particular, we fix $(P_{\xi})_{\xi \in \mathbb{S}^{n-1}}$ a transversal family of maps in~$\Om$.

\begin{proposition}
\label{p:fproposition}
Let $(P_{\xi})_{\xi \in \mathbb{S}^{n-1}}$ be a transversal family of maps in~$\Om$, let $(\eta_{\xi})_{\xi \in \mathbb{S}^{n-1}}$ be a family of Radon measures as in~\eqref{e:condigm2000}--\eqref{e:caratheodoryc2.1.0}. 
Assume that there exists $p \in (1, +\infty]$ such that $\mathscr{I}_p^{n-1}$ is finite and that
\begin{equation}
\label{e:abscont}
(P_{\xi})_{\sharp} \, \eta_\xi \ll \mathcal{H}^{n-1}  \restr \xi^{\bot}    \  \text{for $\mathcal{H}^{n-1}$-a.e. } \xi \in \mathbb{S}^{n-1}\,.
\end{equation}
Then, the measure $\hat{\mathscr{I}}_{n-1}$ can be disintegrated as 
\begin{equation*}
   \hat{\mathscr{I}}_{n-1} = (f_x \, \mathcal{H}^{n-1} \restr \mathbb{S}^{n-1}) \otimes  \mathscr{I}^{n-1}_1 ,  
\end{equation*}
where $f_x \colon \mathbb{S}^{n-1} \to \mathbb{R}$ are $\mathcal{H}^{n-1}$-measurable functions with $\int_{\mathbb{S}^{n-1}}f_x \, \di \mathcal{H}^{n-1}=1$ for $\mathscr{I}^{n-1}_{1}$-a.e. $x \in \Omega$. Moreover, the family of functions $(f_x)_{x \in \Omega}$ can be chosen in such a way that, defining $f \colon \Omega \times \mathbb{S}^{n-1} \to \mathbb{R}$ as $f(x,\xi):=f_x(\xi)$, then $f$ is a Borel function.
\end{proposition}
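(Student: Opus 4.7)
The plan is to apply the classical disintegration theorem to $\hat{\mathscr{I}}_{n-1}$ along the projection onto $\Omega$, extract a Borel family of probability conditional measures $(\mu_x)_{x\in\Omega}$ on $\mathbb{S}^{n-1}$, and then invoke~\eqref{e:abscont} together with the transversality of $(P_\xi)$ to show that $\mu_x\ll\mathcal{H}^{n-1}$ for $\mathscr{I}^{n-1}_1$-a.e.~$x$. The density $f_x:=d\mu_x/d\mathcal{H}^{n-1}$ is then the desired object.

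First I would verify finiteness. H\"older's inequality gives $\zeta_1(B)\leq \mathcal{H}^{n-1}(\mathbb{S}^{n-1})^{1-1/p}\zeta_p(B)$ for every Borel $B\subseteq\Omega$, so $\mathscr{I}^{n-1}_1(\Omega)\leq C\,\mathscr{I}^{n-1}_p(\Omega)<+\infty$. Proposition~\ref{p:coincidence} then yields $\hat{\mathscr{I}}_{n-1}(\Omega\times\mathbb{S}^{n-1}) = \mathscr{I}^{n-1}_1(\Omega) < +\infty$ and the marginal identity $\pi_{1\sharp}\hat{\mathscr{I}}_{n-1}=\mathscr{I}^{n-1}_1$, where $\pi_1\colon\Omega\times\mathbb{S}^{n-1}\to\Omega$ denotes projection onto the first factor. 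Since $\Omega\times\mathbb{S}^{n-1}$ is a Polish space and $\hat{\mathscr{I}}_{n-1}$ is finite, the classical disintegration theorem (see, e.g., \cite[Theorem~2.28]{afp}) produces a $\mathscr{I}^{n-1}_1$-a.e.~unique Borel family of probability measures $(\mu_x)_{x\in\Omega}$ on $\mathbb{S}^{n-1}$ satisfying
\begin{equation*}
\int g\,d\hat{\mathscr{I}}_{n-1} = \int_\Omega\Bigl(\int_{\mathbb{S}^{n-1}}g(x,\xi)\,d\mu_x(\xi)\Bigr)d\mathscr{I}^{n-1}_1(x)
\end{equation*}
for every bounded Borel $g$, with $x\mapsto\mu_x(B)$ Borel for every Borel $B\subseteq\mathbb{S}^{n-1}$.

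The main technical step is to show $\mu_x\ll\mathcal{H}^{n-1}$ for $\mathscr{I}^{n-1}_1$-a.e.~$x$. For each fixed Borel $\mathcal{H}^{n-1}$-null $N\subseteq\mathbb{S}^{n-1}$, testing the disintegration formula with $g=\mathbf{1}_B\otimes\mathbf{1}_N$ and applying Proposition~\ref{p:coincidence} gives $\int_B\mu_x(N)\,d\mathscr{I}^{n-1}_1(x)=\int_N\eta_\xi(B)\,d\mathcal{H}^{n-1}(\xi)=0$, hence $\mu_x(N)=0$ for $\mathscr{I}^{n-1}_1$-a.e.~$x$; this is insufficient, and the quantifiers must be exchanged. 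To this end I would introduce, for $\varphi\in C(\mathbb{S}^{n-1})$, the Radon measure $\nu_\varphi(B):=\int_{\mathbb{S}^{n-1}}\varphi(\xi)\,\eta_\xi(B)\,d\mathcal{H}^{n-1}(\xi)$. By Proposition~\ref{p:coincidence} one has $\nu_\varphi\ll\mathscr{I}^{n-1}_1$ with $d\nu_\varphi/d\mathscr{I}^{n-1}_1(x)=\int\varphi\,d\mu_x$, and by Besicovitch differentiation on $\mathbb{R}^n$
\begin{equation*}
\int_{\mathbb{S}^{n-1}}\varphi\,d\mu_x=\lim_{r\searrow 0}\frac{\int_{\mathbb{S}^{n-1}}\varphi(\xi)\,\eta_\xi(\mathrm{B}_r(x))\,d\mathcal{H}^{n-1}(\xi)}{\int_{\mathbb{S}^{n-1}}\eta_\xi(\mathrm{B}_r(x))\,d\mathcal{H}^{n-1}(\xi)}
\end{equation*}
for $\mathscr{I}^{n-1}_1$-a.e.~$x$. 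Exploiting~\eqref{e:abscont} and the transversality conditions~\eqref{hp:H2}-\eqref{hp:H3}, a blow-up analysis yields $\eta_\xi(\mathrm{B}_r(x))\sim r^{n-1}\,a(x,\xi)$ as $r\searrow 0$ for $\mathscr{I}^{n-1}_1$-a.e.~$x$ and $\mathcal{H}^{n-1}$-a.e.~$\xi\in\mathbb{S}^{n-1}$, with $a(x,\xi)$ proportional to the density $h_\xi:=d(P_\xi)_\sharp\eta_\xi/d\mathcal{H}^{n-1}$ evaluated at $P_\xi(x)$, weighted by a Jacobian factor dictated by the transversality. Substituting and passing to the limit identifies $\varphi\mapsto\int\varphi\,d\mu_x$ as integration against the normalized $\mathcal{H}^{n-1}$-density $\xi\mapsto a(x,\xi)/\int a(x,\xi')\,d\mathcal{H}^{n-1}(\xi')$, yielding $\mu_x\ll\mathcal{H}^{n-1}$. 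The hard part is precisely this blow-up, where~\eqref{hp:H2}-\eqref{hp:H3} are essential to control $P_\xi(\mathrm{B}_r(x))$ uniformly in $\xi$ and to identify the Jacobian factor; this step is in the spirit of the arguments in \cite[Theorem~1.5]{Tas22}.

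Once $\mu_x\ll\mathcal{H}^{n-1}$ has been secured, Radon-Nikodym produces $f_x:=d\mu_x/d\mathcal{H}^{n-1}\in L^1(\mathcal{H}^{n-1})$ with $\int_{\mathbb{S}^{n-1}}f_x\,d\mathcal{H}^{n-1}=\mu_x(\mathbb{S}^{n-1})=1$ for $\mathscr{I}^{n-1}_1$-a.e.~$x$. A jointly Borel representative of $(x,\xi)\mapsto f(x,\xi):=f_x(\xi)$ is provided by
\begin{equation*}
f(x,\xi):=\limsup_{r\searrow 0}\frac{\mu_x(\mathrm{B}_r(\xi))}{\mathcal{H}^{n-1}(\mathrm{B}_r(\xi))},
\end{equation*}
which is Borel in $(x,\xi)$ since $x\mapsto\mu_x$ is Borel in the weak sense and $\xi\mapsto\mu_x(\mathrm{B}_r(\xi))$ is upper semicontinuous. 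Feeding $\mu_x=f_x\,\mathcal{H}^{n-1}\restr\mathbb{S}^{n-1}$ back into the disintegration formula delivers $\hat{\mathscr{I}}_{n-1}=(f_x\,\mathcal{H}^{n-1}\restr\mathbb{S}^{n-1})\otimes\mathscr{I}^{n-1}_1$, completing the proof.
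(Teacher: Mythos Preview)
Your overall architecture---apply the disintegration theorem with respect to the projection $\pi_1$, obtain conditional probabilities $\mu_x$ on $\mathbb{S}^{n-1}$, then prove $\mu_x\ll\mathcal{H}^{n-1}$---is exactly the route the paper takes. The paper's own proof is a one-line citation: it combines the disintegration theorem with \cite[Proposition~4.7]{Tas22} and \cite[Remark~4.3]{Tas22}, the former being precisely the absolute continuity of the conditionals and the latter addressing the joint Borel selection. So at the level of strategy you match the paper.

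The gap is in your blow-up step. You claim that \eqref{e:abscont} together with \eqref{hp:H2}--\eqref{hp:H3} yields $\eta_\xi(\mathrm{B}_r(x))\sim r^{n-1}a(x,\xi)$ with $a(x,\xi)$ proportional to the density $h_\xi=d(P_\xi)_\sharp\eta_\xi/d\mathcal{H}^{n-1}$ at $P_\xi(x)$. This does not follow: \eqref{e:abscont} controls only the push-forward $(P_\xi)_\sharp\eta_\xi$, not $\eta_\xi$ itself, and without information on how $\eta_\xi$ is distributed along the fibers $P_\xi^{-1}(y)$ there is no reason $\eta_\xi(\mathrm{B}_r(x))$ should be comparable to $(P_\xi)_\sharp\eta_\xi$ of a ball around $P_\xi(x)$. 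The transversality conditions concern the dependence of $P_\xi$ on $\xi$, not the fiber structure of $\eta_\xi$. More seriously, your argument invokes the hypothesis $p\in(1,+\infty]$ only to deduce finiteness of $\mathscr{I}^{n-1}_1$, yet this hypothesis is exactly what drives the absolute continuity of $\mu_x$: it furnishes a uniform $L^p$-type control on the maps $\xi\mapsto\eta_\xi(B)$ that prevents the normalized weights $\eta_\xi(\mathrm{B}_r(x))/\mathscr{I}^{n-1}_1(\mathrm{B}_r(x))$ from concentrating on $\mathcal{H}^{n-1}$-null sets as $r\searrow 0$. With $p=1$ the conclusion can fail, so any correct argument must exploit $p>1$ at this point. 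The content of \cite[Proposition~4.7]{Tas22} is precisely this step; your reference to \cite[Theorem~1.5]{Tas22} (the rectifiability criterion, quoted in the paper as Theorem~\ref{t:rectheorem}) is not the right tool here.
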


\begin{proof}
It follows by a combination of disintegration theorem with \cite[Proposition 4.7]{Tas22} and \cite[Remark 4.3]{Tas22}.
\end{proof}




We present the definition of \emph{integralgeometric measure}. 

\begin{definition}
\label{d:igm}
 Let $(P_\xi)_{\xi \in \mathbb{S}^{n-1}}$ be a family of transversal maps on~$\Om$, let the family~$(\eta_{\xi})_{\xi \in \mathbb{S}^{n-1}}$ be as in~\eqref{e:condigm2000}, and let the measure $\mathscr{I}^{n-1}_p$ be as in~\eqref{e:condigm1000}--\eqref{e:caratheodoryc2}. We say that~$\mathscr{I}^{n-1}_p$ is integralgeometric if and only if \eqref{e:abscont} holds true and there exists $E \in \mathcal{B}( \Omega)$ such that
\begin{align}
\label{e:condigm5}
&\eta_\xi(\Omega \setminus E)=0 \ \ \text{for $\mathcal{H}^{n-1}$-a.e. $\xi \in \mathbb{S}^{n-1}$}\\
\label{e:condigm6}
&\mathcal{H}^0(E \cap P^{-1}_\xi(y)) <  +\infty  \text{ for $\mathcal{H}^{n-1}$-a.e. }\xi \in \mathbb{S}^{n-1} \text{, $(P_{\xi})_{\sharp}\eta_\xi$-a.e. }y \in \xi^\bot.
\end{align}
\end{definition}

We conclude this section with a fundamental rectifiability criterion for integralgeometric measures (see \cite[Theorem 1.5]{Tas22}).

\begin{theorem}
\label{t:rectheorem}
 Let $\Om$ be an open subset of~$\R^{n}$, let $(P_\xi)_{\xi \in \mathbb{S}^{n-1}}$ be a family of transversal maps on~$\Om$, and let $(\eta_{\xi})_{\xi \in \mathbb{S}^{n-1}}$ and $\mathscr{I}^{n-1}_{p}$ be as in~\eqref{e:condigm2000}--\eqref{e:caratheodoryc2} for $p \in [1, +\infty]$. Assume that there exists $p \in (1, +\infty]$ such that $\mathscr{I}_p^{n-1}$ is a finite integralgeometric measure. Then, $\mathscr{I}^{n-1}_1$ is $(n-1)$-rectifiable.
\end{theorem}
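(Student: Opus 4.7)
The plan is to argue via a decomposition-and-contradiction scheme, combining the disintegration of Proposition~\ref{p:fproposition} with a curvilinear analog of the Besicovitch--Federer projection theorem in order to rule out any purely $(n-1)$-unrectifiable component of $\mathscr{I}^{n-1}_1$. Since $p>1$, H\"older's inequality on the sphere gives $\zeta_1 \leq \mathcal{H}^{n-1}(\mathbb{S}^{n-1})^{1/p'}\zeta_p$, hence $\mathscr{I}^{n-1}_1$ is also finite, and, together with~\eqref{e:abscont}, this allows Proposition~\ref{p:fproposition} to be applied, producing
\[
\hat{\mathscr{I}}_{n-1} = (f_x\,\mathcal{H}^{n-1}\restr \mathbb{S}^{n-1}) \otimes \mathscr{I}^{n-1}_1,
\]
with $\int_{\mathbb{S}^{n-1}} f_x\,\di\mathcal{H}^{n-1}=1$ for $\mathscr{I}^{n-1}_1$-a.e.\ $x$. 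In particular, for such $x$ the set $\{\xi\in\mathbb{S}^{n-1}:f_x(\xi)>0\}$ has positive $\mathcal{H}^{n-1}$-measure, so each typical point is ``seen'' by the fibers of $\eta_\xi$ in many directions.

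Next, I would decompose the finite measure as $\mathscr{I}^{n-1}_1 = \mu_r + \mu_u$ into its $(n-1)$-rectifiable part $\mu_r$, concentrated on a countably $(n-1)$-rectifiable set, and a part $\mu_u$ concentrated on a purely $(n-1)$-unrectifiable Borel set $N$; the goal becomes $\mu_u=0$. The technical core is a curvilinear Besicovitch--Federer projection theorem for transversal families: under the hypotheses \eqref{e:h1}--\eqref{e:h3}, and using the decomposition $(P_\xi)_{\xi\in\mathbb{S}^{n-1}}=\bigcup_i (P^i_\xi)_{\xi\in S_i}$ from Remark~\ref{r:observation}, every purely $(n-1)$-unrectifiable Borel set $N$ satisfies $\mathcal{H}^{n-1}(P_\xi(N))=0$ for $\mathcal{H}^{n-1}$-a.e.\ $\xi\in\mathbb{S}^{n-1}$. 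This is the curvilinear counterpart of Federer's structure theorem, obtainable in the spirit of the Peres--Schlag and Hovila--J\"arvenp\"a\"a--J\"arvenp\"a\"a--Ledrappier transversality frameworks, where the Jacobian lower bound \eqref{e:h2} plays the role of the transversality constant.

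To close the argument, suppose $\mu_u(N)>0$. Applying Proposition~\ref{p:coincidence} to $F = N\times\mathbb{S}^{n-1}$ with competitor $A=N\times\mathbb{S}^{n-1}$ (so that $A_\xi=N$ for every $\xi$) and combining with the disintegration above, one obtains
\[
\int_{\mathbb{S}^{n-1}}\eta_\xi(N)\,\di\mathcal{H}^{n-1}(\xi)=\hat{\mathscr{I}}_{n-1}(N\times\mathbb{S}^{n-1})=\mu_u(N)>0,
\]
so that $\eta_\xi(N)>0$ on a $\mathcal{H}^{n-1}$-positive set of directions. The absolute continuity~\eqref{e:abscont} then gives $\mathcal{H}^{n-1}(P_\xi(N))\geq (P_\xi)_\sharp\eta_\xi(P_\xi(N))>0$ on the same set, contradicting the curvilinear projection theorem. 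Hence $\mu_u=0$ and $\mathscr{I}^{n-1}_1$ is $(n-1)$-rectifiable. The finite-fiber condition~\eqref{e:condigm6} enters implicitly: it ensures that the integralgeometric measure has the right ``codimension one'' character, so that the Federer-type argument bounds the multiplicity of the projections $P_\xi$ on the support of $\eta_\xi$.

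The main obstacle is, unsurprisingly, the curvilinear Besicovitch--Federer theorem alluded to above. Classical proofs hinge on the linearity of orthogonal projections and on Fourier-analytic arguments that must be replaced, in the present nonlinear setting, by a quantitative blow-up/covering argument exploiting the Jacobian lower bound~\eqref{e:h2} together with the uniform $C^2$-bounds~\eqref{e:h1}, \eqref{e:h3}. The hypothesis $p>1$ is essential precisely here: via H\"older duality applied to \eqref{e:condigm1000}, it provides enough $L^{p'}$-integrability in the $\xi$-variable to upgrade the a.e.\ vanishing of $\mathcal{H}^{n-1}(P_\xi(N))$ into a quantitative estimate that can be contradicted. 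A secondary difficulty, handled by Remark~\ref{r:observation}, is that transversality holds only on the subsets $S_i\subseteq\mathbb{S}^{n-1}$, so one must run the projection theorem separately for each $(P^i_\xi)_{\xi\in S_i}$ and then patch.
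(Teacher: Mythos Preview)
The paper does not prove Theorem~\ref{t:rectheorem}; it is quoted verbatim from \cite[Theorem~1.5]{Tas22} as an external input. So there is no proof in the paper to compare against, and your proposal is really a sketch of how one might attack the result from scratch.

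That said, your sketch has a genuine gap beyond the acknowledged one. You reduce to showing $\mu_u(N)=0$ for a purely $(n-1)$-unrectifiable Borel set $N$ carrying the unrectifiable part of $\mathscr{I}^{n-1}_1$, and then appeal to a curvilinear Besicovitch--Federer theorem to get $\mathcal{H}^{n-1}(P_\xi(N))=0$ for a.e.\ $\xi$. But even the classical Besicovitch--Federer theorem requires the set to have $\sigma$-finite $\mathcal{H}^{n-1}$-measure, and nothing in your argument guarantees this for $N$: finiteness of $\mathscr{I}^{n-1}_1(N)$ does not imply $\mathcal{H}^{n-1}(N)<\infty$. This is precisely where the integralgeometric hypothesis of Definition~\ref{d:igm} does real work, not merely ``implicitly'': conditions~\eqref{e:condigm5}--\eqref{e:condigm6} supply a Borel set $E$ on which all the $\eta_\xi$ are concentrated and whose fibers $E\cap P_\xi^{-1}(y)$ are finite for a.e.\ $(\xi,y)$, and it is this $E$ (intersected with $N$) that must be fed into the structure-theorem machinery. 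Without explicitly passing to $E$ and using the finite-fiber condition to obtain the needed $\sigma$-finiteness with respect to the curvilinear integralgeometric measure $\tilde{\mathcal{I}}^{n-1}$, the projection argument cannot start.

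A minor point: the inequality $\mathcal{H}^{n-1}(P_\xi(N))\geq (P_\xi)_\sharp\eta_\xi(P_\xi(N))$ is not meaningful as written, since the two sides are values of different measures. What you want is the implication coming from~\eqref{e:abscont}: if $(P_\xi)_\sharp\eta_\xi(P_\xi(N))>0$ then $\mathcal{H}^{n-1}(P_\xi(N))>0$. That implication is correct and suffices for the contradiction, but the displayed inequality should be replaced by it.
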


\section{Curvilinear projections}
\label{s:curvilinear}
 
 This section is dedicated to the basic definitions of curvilinear projections (cf.~Definitions~\ref{d:param} and~\ref{d:CP}) and to the local construction of a family of curvilinear projections (see Section~\ref{sub:curvpro}). Further properties of curvilinear projections are studied in Section~\ref{s:technical}.

 \subsection{Assumptions and basic definitions}
 \label{sub:basic}
 
 From now on we fix a field $F \in C^{\infty} (\R^{n} \times \R^{n}; \R^{n})$ which is $2$-homogeneous in the second variable, namely, for every $x,v \in \mathbb{R}^n$ and $\alpha \in \mathbb{R}$ we have
\begin{equation}
    \label{e:quadratic}
    F(x,\alpha v) = \alpha^2 F(x,v) \,.
\end{equation}


We now give the definitions of parametrized \UUU maps and of curvilinear projections. \EEE

\begin{definition}[Parametrized maps]
\label{d:param-maps}
Let $\Omega$ be an open subset of~$\mathbb{R}^n$ and $\xi \in \mathbb{S}^{n-1}$. We say that a map $P \colon \Omega \to \xi^{\bot}$ is {\em parametrized} on~$\Omega$ if and only if there exist $\rho,\tau>0$ and a smooth Lipschitz map $ \varphi  \colon \{ y + t\xi: (y, t) \in [\xi^{\bot} \cap \mathrm{B}_{\rho}(0)] \times  (-\tau, \tau)\}  \to \R^{n}$  such that
\begin{enumerate}[label=(\arabic*), ref=(\arabic*)]
    \item $\Omega \subseteq \text{Im}(\varphi  )$;
    \item $\varphi^{-1}  \restr \Om$ is a bi-Lipschitz diffeomorphism with its image;
    \item $ P(\varphi ( y +  t\xi )) = y$ for every $(y, t) \in [\xi^{\bot} \cap \mathrm{B}_{\rho}(0)] \times  (-\tau, \tau)$ such that $y + t\xi \in  \varphi^{-1} (\Om)$;
\end{enumerate}
\end{definition}

\begin{remark}
\label{r:notation-velocity}
Given $\varphi \colon \{ y + t\xi: (y, t) \in [\xi^{\bot} \cap \mathrm{B}_{\rho}(0)] \times  (-\tau, \tau)\}  \to \R^{n}$ as in Definition~\ref{d:param-maps}, for simplicity of notation we denote \UUU by \EEE $\dot{\varphi}$ and $\ddot{\varphi}$ the first and second derivatives w.r.t.~$t$ of the function $t \mapsto \varphi (y + t\xi)$.
\end{remark}

\begin{remark}
 Conditions (2) \UUU and \EEE (3) of parametrized map imply
\begin{enumerate}[label=(\arabic*), ref=(\arabic*), ]
\setcounter{enumi}{3}
    \item
    $ \varphi ( P (x) +  (\varphi^{-1}(x) \cdot \xi) \xi \big) = x$ for every $x \in \Omega$.
\end{enumerate}
 We will more compactly denote by~$t^\xi_x$ the real number $ \varphi^{-1}(x) \cdot \xi$ for every $x \in \Omega$ and every $\xi \in \mathbb{S}^{n-1}$. Whenever~$\xi$ is fixed and there is no misunderstanding, we simply drop the index~$\xi$ and therefore write~$t_x$ instead of~$t^\xi_x$. 
\end{remark}

\begin{definition}[Velocity field]
Let~$\Om$ be a bounded open subset of~$\R^{n}$, $\xi \in \mathbb{S}^{n-1}$, and let $P \colon \Om \to \xi^{\bot}$ be a map parametrized by~$\varphi_{\xi}$ on~$\Om$. For every $x \in \Omega$ we define the {\rm velocity field} 
\begin{equation*}
\xi_{\varphi}(x):= \dot{\varphi}(P (x) +t_x \xi).
\end{equation*}
\end{definition}


\begin{definition}[Curvilinear projections]
\label{d:CP-maps}
Let $\Omega$ be an open subset of~$\mathbb{R}^n$ and $\xi \in \mathbb{S}^{n-1}$. We say that a smooth Lipschitz map $P \colon \Omega \to \xi^{\bot}$  is a \emph{curvilinear projection} (with respect to $F$) on~$\Om$ if \UUU the following holds: \EEE
\begin{enumerate}
\item $P$ is parametrized on~$\Om$ by $\varphi \colon \{ y + t\xi: (y, t) \in [\xi^{\bot} \cap \mathrm{B}_{\rho}(0)] \times  (-\tau, \tau)\}  \to \R^{n}$;
\item For every $(y, t) \in [\xi^{\bot} \cap {\rm B}_\rho(0)] \times (-\tau,\tau)$ we have
\begin{equation*}
    \ddot{\varphi} (y + t\xi) = F(\varphi(y + t\xi) ,\dot{\varphi}(y + t\xi) )\,.
    \end{equation*}
\end{enumerate}
\end{definition}


In our analysis, we will further need the following notions of parametrized \UUU family and of family of curvilinear projections. \EEE

\begin{definition}[Parametrized family]
\label{d:param}
Let $\Omega$ be an open subset of~$\mathbb{R}^n$. We say that a family $P_\xi \colon \Omega \to \xi^\bot$ for $\xi \in \mathbb{S}^{n-1}$ is {\em parametrized} on~$\Omega$ if and only if there exist $\rho,\tau>0$, an open subset~$A$ of~$\R^{n} \times \mathbb{S}^{n-1}$, and a smooth Lipschitz map $\varphi \colon  A \to \R^{n}$  such that
\begin{enumerate}[label=(\arabic*), ref=(\arabic*)]
    \item for every $\xi \in \mathbb{S}^{n-1}$ we have $A_{\xi} = \{ y + t\xi: (y, t) \in   [\xi^\bot \cap {\rm B}_\rho(0)] \times (-\tau,\tau) \} $;
    \item for every $\xi \in \mathbb{S}^{n-1}$, $P_{\xi}$ is parametrized on~$\Om$ by the map $\varphi_{\xi} := \varphi (\cdot, \xi) \colon A_{\xi} \to \R^{n}$. 
\end{enumerate}
\end{definition}



We also give the definition of families of curvilinear projections.

\begin{definition}[Family of curvilinear projections]
\label{d:CP}
Let $\Omega$ be an open subset of~$\mathbb{R}^n$. We say that a family of maps $P_\xi \colon \Omega \to \xi^\bot$ for $\xi \in \mathbb{S}^{n-1}$ is a family of \emph{curvilinear projections} on~$\Om$ if the following conditions hold:
\begin{enumerate}[label=(\arabic*), ref=(\arabic*)]
\item  the family $(P_{\xi})_{\xi \in \mathbb{S}^{n-1}}$ is parametrized by $\varphi \colon  A \to \R^{n}$; 
\item  for every $\xi \in \mathbb{S}^{n-1}$, $P_\xi$ is a curvilinear projection on~$\Om$ with parametrization $\varphi_{\xi} = \varphi(\cdot, \xi)$; 
\item $(P_\xi)_{\xi \in \mathbb{S}^{n-1}}$ is a transversal family of maps on~$\Om$;
\item for every $x \in \Om$, the map $\xi \mapsto \xi_{\varphi}(x)/|\xi_{\varphi}(x)|$ is a diffeomorphism from $\mathbb{S}^{n-1}$ onto itself.
\end{enumerate}
\end{definition}

We conclude this section by defining suitable slices of a measurable function~$u \colon \Om \to \R^{m}$ and of a subset~$B$ of~$\Om$ w.r.t.~a curvilinear projection. For this purpose we fix a map $g \colon \Omega \times \mathbb{R}^n \to \mathbb{R}^m$  satisfying the following properties:
\begin{enumerate}[label=(G.\arabic*),ref=G.\arabic*]
\item \label{G1} $g \in C(\Om \times \R^{n} ; \R^{m})$;
\item \label{G2} 
For every $x \in \Omega$ and $\Sigma \subset \mathbb{S}^{n-1}$ with $\mathcal{H}^{n-1}(\Sigma)>0$ we find an open neighborhood $U$ of $x$, an integer $0 \leq k \leq m$, and vectors $\{\xi_1,\dotsc,\xi_k\} \subset \Sigma$ such that
\begin{align*}
    \text{span}\{g(z,\xi_1), \dotsc,g(z,\xi_k)\} =\UUU \text{span}\{g(z,v) : \EEE v \in \mathbb{R}^n\} \qquad \text{for every $z \in U$}.
\end{align*} 
\end{enumerate}

\begin{remark}
We notice that for most of our arguments we only need~\eqref{G1}, while \eqref{G2} will be used in Corollary~\ref{c:int2} and in the corresponding definitions.
\end{remark}

We now give two definitions.

\begin{definition}
\label{d:mathfraku}
Let $\Om$ be an open subset of~$\R^{n}$ and $g \colon \Omega \times \mathbb{R}^n \to \mathbb{R}^m$ satisfying~\eqref{G1}--\eqref{G2}. Then, we define a continuous map $\pi \colon \Omega \times \mathbb{R}^m \to \mathbb{R}^m$ in such a way that $\pi(x,\cdot)$ coincides with the orthogonal projection of $\mathbb{R}^m$ onto $\text{span}\{ \text{Im}(g(x,\cdot))\}$ for every $x \in \Omega$. Moreover, for every $u \colon \Omega \to \mathbb{R}^m$ we define $\mathfrak{u} \colon \Omega \to \mathbb{R}^m$ as
\begin{equation*}
\mathfrak{u} (x):=\pi(x,u(x))\,.
\end{equation*}
\end{definition}

\begin{definition}[Slices]
\label{d:slices}
Let $\Om$ be an open subset of~$\R^{n}$, $\xi \in \mathbb{S}^{n-1}$, let $P \colon \Om \to \xi^{\bot}$ be a curvilinear projection on~$\Om$ parametrized by $\varphi \colon \{ y + t\xi: (y, t) \in [\xi^{\bot} \cap {\rm B}_{\rho} (0)] \times (-\tau, \tau)\} \to \R^{n}$, and let $g \colon \Om \times \R^{n} \to \R^{m}$ satisfy~\eqref{G1}. For every $B \subseteq \Om$ and every $y \in \xi^{\bot} \cap {\rm B}_{\rho} (0)$ we define
\begin{displaymath}
B^{\xi}_{y}:= \{ t \in \R: \, \varphi (y +  t\xi ) \in B\}\,.
\end{displaymath}
For every measurable function $u \colon \Om \to \R^{m}$, we define $\hat{u}^{\xi}_y \colon \Om^{\xi}_{y} \to \mathbb{R}$ as 
\begin{equation*}
\hat{u}^{\xi}_{y} (t) := u(\varphi (   y+ t\xi )) \cdot g(\varphi (y +  t\xi ),\dot{\varphi} (y + t\xi))\,,
\end{equation*}
We further define $u_\xi \colon \Omega \to \mathbb{R}$ by
\begin{equation*}
u_\xi(x) := u(x) \cdot g(x, \xi_\varphi(x)) \,,
\end{equation*}
and we notice the following identity
\begin{equation}
    \label{e:sliceide}
    u_\xi(\varphi ( y + t \xi ) ) = \hat{u}^\xi_y(t) \qquad  \text{for }\xi \in \mathbb{S}^{n-1} \text{ and } (y,t) \in [\xi^\bot \cap \mathrm{B}_\rho(0)] \times (-\tau,\tau).
\end{equation}
Eventually, for a measurable function $v \colon \Om \to \R^{m}$ we also set $v^{\xi}_{y} (t) := v(\varphi (y + t\xi))$ for $t \in \Om^{\xi}_{y}$.


\end{definition}

\EEE
\subsection{A technical result on curvilinear projections}
\label{s:technical}

This section is devoted to a technical result concerning curvilinear projections. 
We start by introducing the \emph{exponential map}.

\begin{definition}
\label{d:exp}
 Let $F \in C^{\infty} (\R^{n} \times \R^{n}; \R^{n})$ satisfy~ \eqref{e:quadratic}. For every $x \in \R^{n}$ we define, where it exists, the {\em exponential map} $\text{exp}_{x} \colon \mathbb{R}^n \to \mathbb{R}^n$ as $\text{exp}_{x}(\xi):= v_{\xi, x} (1)$, where $t\mapsto v_{\xi, x} (t)$ solves
\begin{equation}
\label{e:system-exponential}
\begin{cases}
\ddot{u}(t) = F(u(t),\dot{u}(t)), \ &t \in \mathbb{R}\,, \\
u(0)=x\,,&\\
\dot{u}(0)=\xi\,.&
\end{cases}
\end{equation}
\end{definition}

In the next definition we introduce the concept of injectivity radius.

\begin{definition}
\label{d:inj}
For every $x \in \mathbb{R}^n$ we define the injectivity radius $\text{inj}_{x}\in  [0, + \infty)$ as the supremum of all $r>0$ for which $\text{exp}_{x} \restr {\rm B}_{\overline{r}}(0)$ is well defined and $\text{exp}^{-1}_{x} \restr {\rm B}_{\overline{r}}(x)$ is a diffeomorphism with its image. 
\end{definition}

The definition of~$\text{exp}_{x}$ in a small ball ${\rm B}_{r} (0)$ is justified by the following lemma.

\begin{lemma}
\label{l:exp}
Let $F \in C^{\infty}( \mathbb{R}^n \times \mathbb{R}^n ; \mathbb{R}^n)$ satisfy~ \eqref{e:quadratic}. 
For every~$x \in \Omega$ we have $\emph{inj}_{x}>0$. 
\end{lemma}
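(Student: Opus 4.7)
The plan is to exploit the $2$-homogeneity \eqref{e:quadratic} of~$F$ in the second variable in order to obtain a rescaling identity for the flow, and then to combine classical smooth dependence of ODE solutions on initial data with the inverse function theorem. The key rescaling identity reads $v_{\lambda \xi, x}(t) = v_{\xi, x}(\lambda t)$ whenever either side is defined: setting $w(t) := v_{\xi,x}(\lambda t)$, \eqref{e:quadratic} yields $\ddot w(t) = \lambda^2 F(w(t), \lambda^{-1} \dot w(t)) = F(w(t), \dot w(t))$, while $w(0) = x$ and $\dot w(0) = \lambda \xi$, so uniqueness in \eqref{e:system-exponential} gives the claim. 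Choosing $\xi = 0$ in \eqref{e:quadratic} also gives $F(y, 0) = 0$, so the constant map $t \mapsto x$ is the unique solution with $\xi = 0$.

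First I would show that $\exp_x$ is well defined and smooth on a small open ball around the origin. The maximal existence time $T(\eta) \in (0, +\infty]$ of $v_{\eta, x}$ is lower semi-continuous on~$\mathbb{S}^{n-1}$ by continuous dependence on initial data, hence attains a positive minimum $T_0$ on this compact set. For any $\delta \in (0, T_0)$ and any $\xi \in \mathrm{B}_\delta(0)$, writing $\xi = r \eta$ with $r = |\xi|$ and $|\eta| = 1$, the rescaling identity gives $\exp_x(\xi) = v_{\eta, x}(r)$, which is well defined since $r < T(\eta)$ and smooth in $\xi \in \mathrm{B}_\delta(0)$ by standard smooth dependence of the flow of \eqref{e:system-exponential} on the initial velocity.

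Next I would compute, for every $\eta \in \R^n$,
\begin{equation*}
D \exp_x(0)[\eta] = \lim_{t \to 0^+} \frac{v_{t\eta, x}(1) - x}{t} = \lim_{t \to 0^+} \frac{v_{\eta, x}(t) - x}{t} = \dot v_{\eta, x}(0) = \eta,
\end{equation*}
where the second equality uses the rescaling identity with $\lambda = t$. Hence $D \exp_x(0)$ is the identity map, and the inverse function theorem produces $r_0 \in (0, \delta)$ such that $\exp_x \restr \mathrm{B}_{r_0}(0)$ is a smooth diffeomorphism onto an open neighborhood of~$x$. Choosing $r \in (0, r_0)$ small enough that $\overline{\mathrm{B}_r(x)}$ is contained in this neighborhood produces the inverse branch required in Definition~\ref{d:inj} and gives $\text{inj}_x \geq r > 0$.

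The main subtle point is the regularity of~$\exp_x$ at the origin: a priori the map looks degenerate there, since every integral curve starts at~$x$ and as $|\xi| \to 0$ the trajectories collapse. This is resolved precisely by the rescaling identity, which allows one to view $\exp_x(\xi) = v_{\xi, x}(1)$ as a smooth function of~$\xi$ via the joint smoothness of $(\xi, t) \mapsto v_{\xi, x}(t)$ on a compact subset of the parameter space, and to identify its differential at $0$ with the identity.
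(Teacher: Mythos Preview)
Your proof is correct and follows essentially the same approach as the paper: both establish the rescaling identity $v_{\lambda\xi,x}(t)=v_{\xi,x}(\lambda t)$ from the $2$-homogeneity of~$F$, use it together with local well-posedness to obtain well-definedness of $\exp_x$ near the origin, compute $D\exp_x(0)=\mathrm{Id}$ via this identity, and conclude with the inverse function theorem. Your version is slightly more detailed in arguing well-definedness (via compactness of the sphere and lower semicontinuity of the maximal existence time), but the structure and key ideas are the same.
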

\begin{proof}
 By the $2$-homogeneity of $F(x,\cdot)$ we get that 
\begin{equation}
\label{e:retr17}
v_{s\xi, x} (t) = v_{\xi, x} (st)\qquad  \text{for $s,t \in [0, + \infty)$,  $\xi \in \mathbb{R}^n$.}
\end{equation}
Hence, by the local well-posedness of ODEs we have that there exists~$r>0$ such that~$\text{exp}_{x}$ is well-defined on~${\rm B}_{r}(0)$. For every $i \in \{1,\dotsc,n \}$ we have that
\[
D\exp_{x}(0) e_i = \lim_{t \to 0^+} \frac{v_{te_{i}, x} (1) - v_{0, x} (1)}{t} = \lim_{t \to 0^+} \frac{v_{e_{i}, x} (t) - v_{e_{i}, x} (0) }{t} = \dot{v}_{e_{i}, x} ( 0) = e_i\,.
\]
Thus, the differential of $\text{exp}_{x}$ at $0$ is the identity. Applying the implicit function theorem, we find a sufficiently small $\tilde r>0$ such that $\text{exp}^{-1}_{x} \restr {\rm B}_{\tilde r}(x)$ is a diffeomorphism with its image. Therefore setting $\text{inj}_{x} \geq \min\{ r, \tilde{r}\} >0$.
\end{proof}

\begin{definition}
 Let $F \in C^{\infty}( \mathbb{R}^n \times \mathbb{R}^n ; \mathbb{R}^n)$ satisfy~ \eqref{e:quadratic}. and let $(P_\xi)_{\xi \in \mathbb{S}^{n-1}}$ be a family of curvilinear projections on~$\Omega$. Thanks to Lemma~\ref{l:exp} we may define for $0 < \overline{r} < \text{inj}_{x}$ the map $\phi_{x}\colon \mathrm{B}_{\overline{r}}(x) \setminus\{x\} \to \mathbb{S}^{n-1}$ as
\begin{equation}
\label{e:retr12}
\phi_{x}(z):=\psi_{x}^{-1}( \phi_0(\text{exp}^{-1}_{x}(z)))\qquad\text{for every }z \in \mathrm{B}_{\overline{r}}(x) \setminus \{x\}\,,
\end{equation}
where we set $\phi_0(z):=\frac{z}{|z|}$ and $\psi_{x}(\xi):=\phi_0(\xi_\varphi(x))$.
\end{definition}
The following proposition holds.

\begin{proposition}
\label{p:retr}
Let $F \in C^{\infty}( \mathbb{R}^n \times \mathbb{R}^n ; \mathbb{R}^n)$ satisfy~ \eqref{e:quadratic} and let~$(P_\xi)_{\xi \in \mathbb{S}^{n-1}}$ be a family of curvilinear projections on~$\Om$. For every $x \in \Om$, let $0 < \overline{r} < \emph{inj}_{x}$. Then, $\phi_{x}  \in C^{1}(  \mathrm{B}_{\overline{r}}(x) \setminus \{x\} ; \mathbb{S}^{n-1})$ and
\begin{align}
\label{e:chi1.1}
     &P_{\xi}(z)=P_{\xi}(x) \ \ \text{if and only if $\xi = \phi_{x}(z)$ for every $z \in \mathrm{B}_{\overline{r}}(x)\setminus \{x\}$.}\\
     \label{e:retr2}
     &  |\emph{J}\phi_{x}(z)| \leq \frac{C_{x}}{|z-x|^{n-1}} \qquad \text{for every $z \in \mathrm{B}_{\overline{r}}(x) \setminus \{x\}$},
\end{align} 
for some constant $C_{x}>0$. Moreover, if we assume that
\begin{equation}
\label{e:retr9.1.3}
\inf_{(z,\xi) \in {\rm B}_{\overline{r}}(x)\times \mathbb{S}^{n-1}}|\text{J}_z P_{\xi}(z)|>0 \ \,
\end{equation}
then we find a constant $C'_{x}>0$ such that
\begin{equation}
    \label{e:retr2.1}
    \frac{C'_{x}}{|z-x|^{n-1}}\leq |\emph{J}\phi_{x}(z)|  \qquad \text{for every $z \in \mathrm{B}_{\overline{r}}(x) \setminus \{x\}$.}
\end{equation}
\end{proposition}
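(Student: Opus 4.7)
The plan is to write $\phi_x$ as the composition $\psi_x^{-1} \circ \phi_0 \circ \exp_x^{-1}$ and to read off each claim from a property of one of the three factors. First, for the regularity: since $\overline{r} < \mathrm{inj}_x$, Lemma~\ref{l:exp} and the definition of injectivity radius imply that $\exp_x \restr \mathrm{B}_{\overline{r}}(0)$ is a smooth diffeomorphism onto an open set containing $\mathrm{B}_{\overline{r}}(x)$, so $\exp_x^{-1}$ is smooth on the latter ball and vanishes only at~$x$. Then $\phi_0$ is smooth on $\R^n \setminus \{0\}$ and $\psi_x^{-1}$ is a diffeomorphism of $\mathbb{S}^{n-1}$ by Definition~\ref{d:CP}(4), so the composition yields $\phi_x \in C^{\infty}(\mathrm{B}_{\overline{r}}(x) \setminus \{x\};\mathbb{S}^{n-1})$.

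For~\eqref{e:chi1.1}, the key observation is that the fiber $P_\xi^{-1}(P_\xi(x))$ is the image of the curve $t \mapsto \varphi_\xi(P_\xi(x) + t\xi)$, which solves $\ddot\gamma = F(\gamma,\dot\gamma)$ and passes at $t = t_x$ through $x$ with velocity $\xi_\varphi(x)$. Uniqueness for this Cauchy problem, together with the scaling identity $v_{s\eta,x}(t) = v_{\eta,x}(st)$ used in Lemma~\ref{l:exp} (extended to all $s \in \R$ via $v_{-\eta,x}(t) = v_{\eta,x}(-t)$, a direct consequence of~\eqref{e:quadratic}), identifies this fiber with $\{\exp_x(s\,\xi_\varphi(x)) : s \in \R\}$. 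Hence $P_\xi(z) = P_\xi(x)$ is equivalent to $\exp_x^{-1}(z)$ being a scalar multiple of $\xi_\varphi(x)$; taking the sign corresponding to the parametrization and applying $\psi_x^{-1} \circ \phi_0$ rewrites this as $\xi = \phi_x(z)$.

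The upper bound~\eqref{e:retr2} will come from a chain-rule calculation for the $(n-1)$-Jacobian. A direct computation gives $|\mathrm{J}\phi_0(w)| = |w|^{-(n-1)}$ for the radial projection. The differentials $D\exp_x^{-1}$ and $D\psi_x^{-1}$ are bounded on the compact sets $\overline{\mathrm{B}_{\overline{r}}(x)}$ and $\mathbb{S}^{n-1}$ respectively, while $D\exp_x(0) = I$ (as noted in the proof of Lemma~\ref{l:exp}) yields $|\exp_x^{-1}(z)| \geq c_x |z-x|$ on $\mathrm{B}_{\overline{r}}(x)$. Multiplying these three estimates yields~\eqref{e:retr2} with a constant $C_x$ depending on~$x$.

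The lower bound~\eqref{e:retr2.1} is the main obstacle, and this is where hypothesis~\eqref{e:retr9.1.3} enters essentially. The factor $|\mathrm{J}\phi_0(\exp_x^{-1}(z))| = |\exp_x^{-1}(z)|^{-(n-1)}$ is already controlled from below by $C^{-(n-1)}|z-x|^{-(n-1)}$ via smoothness of $\exp_x$. The delicate point is obtaining, uniformly in $z \in \mathrm{B}_{\overline{r}}(x)$ and $\xi \in \mathbb{S}^{n-1}$, a positive lower bound on the Jacobians of the remaining two factors restricted to the subspace orthogonal to the kernel. Injectivity of $\exp_x$ on $\mathrm{B}_{\overline{r}}(0)$ already controls $|\mathrm{J}\exp_x^{-1}|$ from below there. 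For the $\psi_x^{-1}$ factor, I would differentiate the identity $P_\xi \circ \varphi_\xi(y + t\xi) = y$ in $(y,t)$, so that the chain rule converts the uniform lower bound on $|\mathrm{J}_z P_\xi(z)|$ given by~\eqref{e:retr9.1.3} into a uniform upper bound on the Jacobian of $\varphi$ in both arguments; combined with Definition~\ref{d:CP}(4) this forces $|\mathrm{J}\psi_x^{-1}|$ to be bounded away from zero uniformly on $\mathbb{S}^{n-1}$. Assembling the three factors then produces~\eqref{e:retr2.1}.
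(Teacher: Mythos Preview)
Your approach is correct and genuinely different from the paper's. You work directly with the explicit composition $\phi_x=\psi_x^{-1}\circ\phi_0\circ\exp_x^{-1}$ and bound the $(n-1)$-Jacobian of each factor via the chain rule and compactness. The paper instead characterises $\phi_x$ implicitly: by~\eqref{e:chi1.1} the map $\xi\mapsto P_\xi(z)-P_\xi(x)$ vanishes at $\xi=\phi_x(z)$, and the transversality hypothesis~\eqref{hp:H2} gives $|\mathrm{J}_\xi(P_{\phi(z)}(z)-P_{\phi(z)}(x))|\ge C|z-x|^{n-1}$, so the implicit function theorem yields $C^1$-regularity together with the closed formula
\[
\mathrm{J}\phi_x(z)=\frac{\mathrm{J}_zP_{\phi(z)}(z)}{\mathrm{J}_\xi\bigl(P_{\phi(z)}(z)-P_{\phi(z)}(x)\bigr)}\,.
\]
The upper bound then comes from the uniform Lipschitz constant of $P_\xi$ in the numerator and~\eqref{hp:H2} in the denominator; the lower bound comes from~\eqref{e:retr9.1.3} in the numerator and~\eqref{hp:H3} in the denominator.

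One point worth noting: in your route the hypothesis~\eqref{e:retr9.1.3} is actually \emph{not needed}. You invoke it to control $|\mathrm{J}\psi_x^{-1}|$ from below, but this already follows for free from Definition~\ref{d:CP}(4): $\psi_x$ is a diffeomorphism of the compact manifold~$\mathbb{S}^{n-1}$, so $|\mathrm{J}\psi_x^{-1}|$ is a continuous positive function on a compact set and hence uniformly bounded below. The same compactness argument handles $\sigma_{\min}(D\exp_x^{-1})$ on $\overline{\mathrm{B}_{\overline r}(x)}$. So your chain-rule argument in fact proves~\eqref{e:retr2.1} unconditionally, which is slightly stronger than what the paper states; the paper's implicit-function route genuinely needs~\eqref{e:retr9.1.3} because there it appears as the numerator of the Jacobian formula.
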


\begin{proof}
For simplicity of notation, we drop the index~$x$ in the function~$\phi_{x}$. For every $z \in \mathrm{B}_{\overline{r}}(x) \setminus \{x\}$, choosing $\xi_{z} = \text{exp}^{-1}_{x}(z)$ in~\eqref{e:system-exponential} we get that 
\begin{equation}
\label{e:retr18}
v_{\xi_{z}, x} (1) = z\,.
\end{equation}
Therefore, arguing as in~\eqref{e:retr17}, the solution~$u$ of
\begin{equation*}
\begin{cases}
\ddot{u}(t) = F(u(t),\dot{u}(t))  &t \in \mathbb{R}\,, \\
u(0)=x\,,&\\
\dot{u}(0)=\phi_0(\text{exp}^{-1}_{x}(z))\,,&
\end{cases}
\end{equation*}
 satisfies $u(| \xi_{z} |t)= v _{\xi_{z}, x} (t)$.  Setting $\eta:=\psi_{x}^{-1}(\phi_0(\text{exp}^{-1}_{x}(z))$, from the definition of velocity field we have that
 \[
 \varphi_{\eta}( P_{\eta} ( x ) + t_{x}^{\eta} \eta)  = x \ \ \text{ and } \ \ \phi_0(\text{exp}^{-1}_{x}(z)) = \phi_0(\dot{\varphi}_{\eta}( P_{\eta} (x) + t_{x}^{\eta}  \eta)).
 \]
  Since the curve  $\gamma(t):= \varphi_{\eta}(P_{\eta}  (x) + t\eta)$ satisfies $\ddot{\gamma}(t) = F(\gamma(t),\dot{\gamma}(t)), \ \  t \in \mathbb{R}$, we infer from the $2$-homogeneity of $F(z,\cdot)$ the existence of a constant $\alpha$ for which $u(|\xi_z|t)=\varphi_{\eta}(P_{\eta}(x) + \alpha (t + t_{x}^{\eta}) \eta)$ for every~$t$. By~\eqref{e:retr18} and by the definition of~$\phi(z)$ we thus deduce  
 \[
  \varphi_{\phi(z)} ( P_{\phi(z) }(x) + \alpha( 1 + t_{x}^{\phi(z)}) \phi(z))= z\,.
 \]
Therefore, we can make use of (3) of Definition~\ref{d:param-maps} to infer the validity of the \emph{if} part of implication \eqref{e:chi1.1}. The \emph{only if} part of \eqref{e:chi1.1} can be obtained in a similar way.

From property \eqref{hp:H2} of transversal maps we have the existence of a constant $C>0$ such that
\begin{equation}
    \label{e:retr3}
    |\text{J}_\xi( P_{\phi(z)}(z)- P_{\phi(z)}(x))| \geq C |z-x|^{n-1} \qquad \text{for } \UUU (\xi,z) \EEE \in \mathbb{S}^{n-1}  \times {\rm B}_{\overline{r}}(x).
\end{equation}
 Therefore, we are in a position to apply the implicit function theorem and deduce that $z \mapsto \phi(z)$ is $C^1$-regular on the open set ${\rm B}_{\overline{r}}(x) \setminus \{x\}$. We can thus compute the jacobian of $\phi$ as 
\begin{equation}
\label{e:retr15}
    \text{J}\phi(z) = \frac{\text{J}_z P_{\phi(z)}(z)}{\text{J}_\xi( P_{\phi(z)}(z)- P_{\phi(z)}(x))} \qquad  \text{for every }z \in {\rm B}_{\overline{r}}(x)\setminus \{x\}.
\end{equation}
Since the maps $ z \mapsto P_{\xi}(z)$ are Lipschitz continuous in~$\Omega$ we can make use of property~\eqref{e:h3} to deduce that their Lipschitz constants are uniformly bounded with respect to $\xi \in \mathbb{S}^{n-1}$. Therefore, properties~\eqref{e:retr3}--\eqref{e:retr15} yield
\begin{equation*}
    |\emph{J}\phi(z)| \leq \frac{C_{x}}{|z-x|^{n-1}}\qquad \text{ for every } z \in {\rm B}_{\overline{r}}(x) \setminus \{x\},
\end{equation*}
for some constant $C_{x}>0$ depending on~$x$. This proves~\eqref{e:retr2}.

Now using property \eqref{hp:H3} of transversal maps, we have 
\begin{equation}
\label{e:retr9.1.2}
|\text{J}_\xi( P_{\phi(z)}(z)- P_{\phi(z)}(x))| \leq C'|z-x|^{n-1}\qquad \text{for every }z \in {\rm B}_{\overline{r}}(x)\setminus \{x\}.
\end{equation}
Therefore, by virtue of~\eqref{e:retr9.1.3} and of~\eqref{e:retr15}--\eqref{e:retr9.1.2}, we find $C'_{x}>0$ such that 
\begin{equation*}
    |\text{J} \phi(z)| \geq  \frac{C'_{x}}{|z-x|^{n-1}} \qquad  \text{for every }z \in {\rm B}_{\overline{r}}(x) \setminus \{x\}.
\end{equation*} 
This concludes the proof of~\eqref{e:retr2.1} and of the proposition.
\end{proof}

\subsection{Local existence of families of curvilinear projections}
\label{sub:curvpro}
In this subsection we show that it is always possible to locally construct a family of curvilinear projections. This is done by considering suitable flows of solutions of second order ODEs associated to the field $F$. A crucial point in our analysis is that $F$ is $2$-homogeneous in the second variable, which allows us to properly rescale the solution to the ODEs system driven by~$F$.

\begin{definition}
\label{d:curvpro}
Let $x_0 \in \mathbb{R}^n$ and $\rho_{0}>0$. For every $\xi \in {\rm B}_{2}(0)$ and every $y \in \xi^{\bot}\cap {\rm B}_{\rho_{0}}(0)$ we consider the solution $t \mapsto u_{\xi, y}(t)$ of the ODE system
\begin{equation*}
    \begin{cases}
    \ddot{u}(t) = F(u(t), \dot{u}(t)) &  t \in \mathbb{R},\\
    u(0)=y+x_0\,,\\
    \dot{u}(0)=\xi\,,
    \end{cases}
\end{equation*}
which is well-defined for $t \in (-\tau, \tau)$, for a suitable $\tau >0$ depending only on~$x_{0}$ and~$\rho_{0}$, but not on~$\xi$ and~$y$. Then, we define $\varphi_{\xi, x_{0}} \colon \mathbb{R}^n \to \mathbb{R}^n$ as follows: for every $x \in \R^{n}$, if $x = y + t\xi$ with $y \in  \xi^{\bot}\cap {\rm B}_{\rho_{0}}(0)$ and $t \in (-\tau, \tau)$, we set $\varphi_{\xi, x_{0}} (x) := u_{\xi, y}(t)$.

We further define $\varphi_{x_{0}} \colon \R^{n} \times \mathbb{S}^{n-1} \to \R^{n}$ as $\varphi_{x_{0}} (x, \xi) := \varphi_{\xi, x_{0}} (x)$ for $x \in \R^{n}$ and $\xi \in \mathbb{S}^{n-1}$.
\end{definition}

\begin{remark}\label{r:rx0}
Under the assumptions of Definition~\ref{d:curvpro}, the map $\varphi_{\xi, x_{0}}$ is well defined on the open ball ${\rm B}_{r_{x_{0}}} (0)$, for a suitable $r_{x_{0}}>0$ which only depends on~$x_{0}$, but not on~$\xi \in {\rm B}_{2}(0)$.
\end{remark}

\begin{remark}
\label{r:Axi}
Let $\tau$ and~$\rho_{0}$ as in Definition~\ref{d:curvpro}. For every $\xi \in \mathbb{S}^{n-1}$ we may set 
\begin{align*}
A_{\xi} & := \{ y + t\xi: (t, y) \in (-\tau, \tau) \times  (\xi^{\bot}\cap {\rm B}_{\rho_{0}}(0))\} \subseteq \R^{n}\,,
\\
 A&:= \{(x, \xi) \in \R^{n} \times \mathbb{S}^{n-1} : \, x \in A_{\xi},\, \xi \in \mathbb{S}^{n-1}\}\,.
 \end{align*}
Then, $\varphi_{x_{0}}$ is well defined on~$A \times \mathbb{S}^{n-1}$. 
\end{remark}


\begin{definition}
\label{d:varphi-xi-r}
Let $x_{0} \in \R^{n}$ and $r_{x_{0}}>0$ as in Remark~\ref{r:rx0}, and $r>0$. For every $\xi \in {\rm B}_{2}(0)$,  and every $x \in {\rm B}_{\frac{r_{x_{0}}}{ r}} (0)$ we define
\begin{align}
\label{e:varphi-xi-r}
\varphi_{\xi,x_0,r}(x) &  :=r^{-1}(\varphi_{ \xi, x_{0}} (rx) - x_0)\,,\\
\label{e:varphi-xi-r-2}
\Phi_{x_{0}, r} (x, \xi) &:= \big( \varphi_{\xi,x_0,r}(x) , \xi \big)\,.
\end{align}
\end{definition}

We prove a basic convergence property of~$\Phi_{ x_0, r}$.

\begin{lemma}
\label{l:curvpro1.3}
For every $x_{0} \in \Om$ it holds
\begin{align}
    \label{e:curvpro1.3}
  &  \Phi_{x_0,r} \to  id  
\end{align}
in $C^{\infty}_{loc} (\R^{n} \times {\rm B}_{2}(0) ; \R^{n}\times \R^{n})$ as $r\searrow0$.
\end{lemma}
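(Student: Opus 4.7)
The plan is to reduce the claim to the standard smooth-dependence-on-parameters theorem for ODEs, exploiting the $2$-homogeneity of $F$ in its second variable.

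First, I would fix $(x,\xi)\in\R^{n}\times(\mathrm{B}_{2}(0)\setminus\{0\})$ and decompose $x = y + t\xi$ with $y = x - t\xi\in\xi^{\bot}$ and $t = (x\cdot\xi)/|\xi|^{2}$; both $y$ and $t$ depend smoothly on $(x,\xi)$. Since $rx = ry + rt\xi$, Definition~\ref{d:curvpro} gives $\varphi_{\xi,x_{0}}(rx) = u_{\xi,ry}(rt)$, and setting
\begin{equation*}
W_{r}(\tau;y,\xi) := r^{-1}\bigl(u_{\xi,ry}(r\tau) - x_{0}\bigr)
\end{equation*}
produces $\varphi_{\xi,x_{0},r}(x) = W_{r}(t;y,\xi)$. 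A direct computation using $F(z,\alpha v)=\alpha^{2}F(z,v)$ from~\eqref{e:quadratic} shows that $W_{r}$ solves the initial value problem
\begin{equation*}
\ddot W_{r} = r\, F\bigl(x_{0} + rW_{r},\, \dot W_{r}\bigr), \qquad W_{r}(0) = y,\qquad \dot W_{r}(0) = \xi.
\end{equation*}

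Next, I would observe that the right-hand side of this IVP is $C^{\infty}$ jointly in $(\tau,y,\xi,r)$, including at $r=0$ where it vanishes identically. The standard theorem on smooth dependence of ODE solutions on initial data and parameters then yields that the flow $(\tau,y,\xi,r)\mapsto W_{r}(\tau;y,\xi)$ is $C^{\infty}$ on any relatively compact subset of its natural domain, and at $r=0$ it reduces to the linear flow $W_{0}(\tau;y,\xi) = y + \tau\xi$. Hence $W_{r}\to W_{0}$ in $C^{\infty}_{loc}$ as $r\searrow 0$. Composing with the smooth change of variables $(x,\xi)\mapsto(t(x,\xi),y(x,\xi),\xi)$ and using $y+t\xi=x$ gives $\varphi_{\xi,x_{0},r}(x)\to x$ in $C^{\infty}_{loc}$; since the $\xi$-component of $\Phi_{x_{0},r}$ is already the identity, \eqref{e:curvpro1.3} follows.

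The main technical point I expect will be verifying that the exhausting sequence of open sets on which $\Phi_{x_{0},r}$ is defined indeed covers $\R^{n}\times\mathrm{B}_{2}(0)$ in the sense of the $C^{\infty}_{loc}$ topology of Section~\ref{s:preliminaries}, which one reads off from Remark~\ref{r:rx0} together with the explicit rescaling~\eqref{e:varphi-xi-r}. The only remaining care concerns the degeneracy at $\xi=0$, but there the identity $F(\cdot,0)\equiv 0$---itself a direct consequence of $2$-homogeneity---forces $u_{0,y}\equiv y + x_{0}$, so that $\varphi_{0,x_{0},r}=\mathrm{id}$ for every $r>0$ and the convergence is trivial.
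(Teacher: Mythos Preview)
Your proposal is correct and essentially identical to the paper's argument: the paper also rewrites $u_r(t):=\varphi_{\xi,x_0,r}(y+t\xi)$ as the solution of the rescaled system $\ddot u_r = F_{r,x_0}(u_r,\dot u_r)$ with $F_{r,x_0}(z,v)=rF(rz+x_0,v)$, invokes smooth dependence of the ODE flow on initial data and on the vanishing right-hand side to obtain $G_r\to G$, $G(t,y,\xi)=y+t\xi$, in $C^\infty_{loc}$, and then composes with $(x,\xi)\mapsto(\xi\cdot x,\pi_\xi(x),\xi)$. Your explicit treatment of the degeneracy at $\xi=0$ via $F(\cdot,0)\equiv 0$ is a point the paper leaves implicit.
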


\begin{proof}
Let $r_{x_{0}}>0$ be as in Remark~\ref{r:rx0}. We notice that for every $\xi \in \mathbb{S}^{n-1}$ and for $r>0$ the function $u_r(t):=\varphi_{\xi,x_0,r}(y+t\xi)$ satisfies
\begin{equation}
    \label{e:curvpro12}
    \begin{cases}
    \ddot{u}_r(t) =F_{r, x_{0}} (u_{r} (t) , \dot{u}_{r} (t)) &  t \in \mathbb{R}\,,\\
     u_r(0)=y  \,,\\
    \dot{u}_r(0)=\xi \,,
    \end{cases}
\end{equation}
where  $F_{r, x_{0}} (x, v):= rF(rx + x_{0} , v)$  for every $(x,v) \in \mathbb{R}^n \times \mathbb{R}^n$. Denoting by
\begin{align*}
& U_{r}:= \big \{ (t, y, \xi) \in \R \times \R^{n} \times \mathbb{S}^{n-1}:  y \in \xi^{\bot} \text{ and system~\eqref{e:curvpro12} admits solution }
\\
&
\qquad \qquad \qquad \qquad \qquad \qquad \qquad \quad \text{in~$[-|t|, |t|]$ with initial conditions~$(y, \xi)$} \big\}\,,
\\
& U_{\infty}:= \{ (t, y, \xi) \in \R \times \R^{n} \times \mathbb{S}^{n-1}: \, y \in \xi^{\bot}\}\,,
\end{align*}
we have that $U_{r} \nearrow U_{\infty}$ as $r \searrow 0$. Let $G_r \colon U_{r} \to \R^{n}$ be the flow relative to system~\eqref{e:curvpro12}, i.e., the map that to each $(t, y, \xi) \in U_{r}$ associates~$v_{r}(t)$, where $v_{r}$ is the unique solution of~\eqref{e:curvpro12} with initial data $(y, \xi)$. Since $F \in C^\infty(\mathbb{R}^n \times \mathbb{R}^n; \R^{n})$, $F_{r, x_{0}} \to 0$ in $C^{\infty}_{loc}(\mathbb{R}^n \times \mathbb{R}^n; \R^{n})$ as $r \searrow 0$. Thus, by the continuous and differentiable dependence of solutions of ODEs from the data of the system, we have that 
\begin{equation}
\label{e:curvpro14}
G_{r} \to G \qquad \text{ in $C^{\infty}_{loc} (U; \R^{n})$ as $r \searrow 0$,}
\end{equation}
 where $G \colon U \to \R^{n}$ is defined as $G(t, y, \xi) := y + t\xi$.  For every $x \in {\rm B}_{\frac{r_{x_{0}}}{r}} (0)$ we can write
\begin{equation}
\label{e:curvpro13}
\Phi_{x_{0}, r} (x, \xi) = \big( \varphi_{\xi,x_0,r}(x) , \xi \big) = \big( G_r(\xi \cdot x, \pi_\xi(x),\xi), \xi\big) \,.
\end{equation}
From \eqref{e:curvpro14} and \eqref{e:curvpro13} we immediately deduce~\eqref{e:curvpro1.3}.
\end{proof}

\begin{corollary}
\label{c:curvpro1.3}
For every $x_{0} \in \Om$ there exists $R_{x_{0}}>0$ such that for every $\xi \in {\rm B}_{2}(0)$ the map $\varphi_{\xi, x_{0}}^{-1} \lfloor {\rm B}_{R_{x_{0}}} (x_{0})$ is a diffeomorphism with its image.
\end{corollary}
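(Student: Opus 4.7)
The plan is to combine the convergence $\Phi_{x_{0},r}\to \mathrm{id}$ established in Lemma \ref{l:curvpro1.3} with a quantitative form of the inverse function theorem. First, I observe that by construction $\varphi_{\xi,x_{0}}(0)=u_{\xi,0}(0)=x_{0}$, so \eqref{e:varphi-xi-r} gives $\varphi_{\xi,x_{0},r}(0)=0$ for every admissible $\xi$ and $r$. Moreover, applied to the compact set $\overline{{\rm B}_{1}(0)}\times K$, where $K$ is any compact subset of ${\rm B}_{2}(0)$ (for instance $K=\overline{{\rm B}_{3/2}(0)}$, which contains $\mathbb{S}^{n-1}$), Lemma \ref{l:curvpro1.3} yields that $\varphi_{\xi,x_{0},r}\to \mathrm{id}_{\R^{n}}$ in $C^{\infty}$-norm on $\overline{{\rm B}_{1}(0)}$, uniformly in $\xi\in K$, as $r\searrow 0$. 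In particular $\|D\varphi_{\xi,x_{0},r}-I\|_{L^{\infty}({\rm B}_{1}(0))}\to 0$ uniformly in $\xi\in K$.

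I next invoke the following quantitative form of the inverse function theorem, which follows from a standard contraction mapping argument applied to $T_{z}(x):=x+(z-f(x))$: there exist dimensional constants $\delta,\varepsilon\in (0,1/2)$ such that every smooth map $f\colon {\rm B}_{1}(0)\to \R^{n}$ with $f(0)=0$ and $\|Df-I\|_{L^{\infty}({\rm B}_{1}(0))}\leq \varepsilon$ is a $C^{\infty}$-diffeomorphism from ${\rm B}_{\delta}(0)$ onto an open set containing ${\rm B}_{\delta/2}(0)$. Applied with $f=\varphi_{\xi,x_{0},r_{0}}$ for $r_{0}>0$ sufficiently small so that the $C^{1}$-tolerance $\varepsilon$ is uniformly achieved on $K$, this yields that $\varphi_{\xi,x_{0},r_{0}}\lfloor {\rm B}_{\delta}(0)$ is a $C^{\infty}$-diffeomorphism onto an open set containing ${\rm B}_{\delta/2}(0)$ for every $\xi\in K$.

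Finally, I translate the conclusion back to $\varphi_{\xi,x_{0}}$ via the rescaling identity $\varphi_{\xi,x_{0}}(y)=r_{0}\,\varphi_{\xi,x_{0},r_{0}}(y/r_{0})+x_{0}$, valid for $|y|<r_{0}\cdot r_{x_{0}}$ with $r_{x_{0}}$ as in Remark \ref{r:rx0} (after shrinking $r_{0}$ if necessary). This shows that $\varphi_{\xi,x_{0}}$ is a $C^{\infty}$-diffeomorphism from ${\rm B}_{r_{0}\delta}(0)$ onto an open set containing ${\rm B}_{r_{0}\delta/2}(x_{0})$; hence $\varphi_{\xi,x_{0}}^{-1}\lfloor {\rm B}_{R_{x_{0}}}(x_{0})$ is a $C^{\infty}$-diffeomorphism with its image for $R_{x_{0}}:=r_{0}\delta/2$, for every $\xi\in K$. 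The extension to every $\xi\in {\rm B}_{2}(0)\setminus\{0\}$ then follows from the $2$-homogeneity of $F$ in the second variable (cf.~\eqref{e:quadratic}), which via the identity $v_{\lambda\xi,x}(t)=v_{\xi,x}(\lambda t)$ in \eqref{e:retr17} gives that $\varphi_{\lambda\xi,x_{0}}$ and $\varphi_{\xi,x_{0}}$ agree on the intersection of their domains for every $\lambda>0$, reducing the claim to $\xi\in \mathbb{S}^{n-1}\subset K$. The main subtlety is precisely the uniformity of the constants $\delta$ and $r_{0}$ in $\xi$, which is ensured by the compactness of $K$ together with the $C^{\infty}_{\mathrm{loc}}$ convergence from Lemma \ref{l:curvpro1.3}.
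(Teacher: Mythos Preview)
Your proposal is correct and follows the same route as the paper's one-line proof, namely: Lemma~\ref{l:curvpro1.3} gives $\varphi_{\xi,x_{0},r}\to\mathrm{id}$ in $C^{\infty}_{\mathrm{loc}}$, and then the rescaling identity~\eqref{e:varphi-xi-r} transfers local invertibility back to $\varphi_{\xi,x_{0}}$; you simply unpack this via a quantitative inverse function theorem, which is exactly what the paper leaves implicit. The one genuine addition you make is the final reduction to $\xi\in\mathbb{S}^{n-1}$ via the homogeneity identity $\varphi_{\lambda\xi,x_{0}}=\varphi_{\xi,x_{0}}$, which cleanly handles the issue that $C^{\infty}_{\mathrm{loc}}$ convergence on $\R^{n}\times{\rm B}_{2}(0)$ only gives uniformity on compact subsets of ${\rm B}_{2}(0)$; the paper does not comment on this point, so your argument is in fact slightly more complete.
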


\begin{proof}
The thesis follows directly from Lemma~\ref{l:curvpro1.3} and from Definition~\ref{d:varphi-xi-r} (see~\eqref{e:varphi-xi-r}).
\end{proof}

Corollary~\ref{c:curvpro1.3} justifies the following definition of~$P_{\xi, x_{0}}$ for $x_{0} \in \Om$ and~$\xi \in \mathbb{S}^{n-1}$.

\begin{definition}
\label{d:P-xi}
Let $x_{0} \in \Om$, $\xi \in {\rm B}_{2}(0)$, and $R_{x_{0}}>0$ be as in Corollary~\ref{c:curvpro1.3}. We define the map $P_{\xi,x_0} \colon {\rm B}_{R_{x_{0}}} (x_{0})  \to \xi^\bot$ as
\begin{equation}
    \label{e:curvproj}
    P_{\xi,x_0}:=\pi_\xi \circ \varphi_{\xi, x_{0}} ^{-1},
\end{equation}
where $\pi_\xi \colon \mathbb{R}^n \to \xi^\bot$ denotes the orthogonal projection onto the orthogonal to $\xi$. Furthermore, for $r>0$ we define $P_{\xi, x_{0}, r}\colon {\rm B}_{\frac{R_{x_{0}}}{r}} (0) \to \xi^{\perp}$ as
\begin{equation}
    \label{e:curvproj-2}
    P_{\xi,x_0, r}:=\pi_\xi \circ \varphi_{\xi, x_{0},r} ^{-1}.
\end{equation}
\end{definition}

%

The rest of this section is devoted to prove that, up to taking a smaller~$R_{0} \in (0, R_{x_{0}}]$ depending only on~$x_{0}$,~$(P_{\xi, x_{0}})_{\xi \in \mathbb{S}^{n-1}}$ is a family of curvilinear projections over~${\rm B}_{R_{0}} (x_{0})$ (see Theorem~\ref{p:curvpro}). To this aim, we start by showing that $(\pi_{\xi})_{\xi \in \mathbb{S}^{n-1}}$ is transversal in~$\R^{n}$.

\begin{proposition}
\label{p:trapro}
 The family of orthogonal projections  $\pi_{\xi} \colon \mathbb{R}^n \to \xi^\bot$ with $\xi \in \mathbb{S}^{n-1}$ is transversal. 
\end{proposition}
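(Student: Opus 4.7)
The plan is to verify conditions (H.1), \eqref{hp:H2}, and \eqref{hp:H3} of Definition~\ref{d:transversal} for the specific choice $P_\xi = \pi_\xi$. The key preliminary observation is that both $\pi_\xi$ and $\pi_{e_i}$ are linear, so $P^i_\xi(x) - P^i_\xi(x') = P^i_\xi(x-x')$ and hence
\[
T^i_{xx'}(\xi) \;=\; P^i_\xi\!\left(\frac{x-x'}{|x-x'|}\right) \;=:\; P^i_\xi(w), \qquad w \in \mathbb{S}^{n-1}.
\]
Thus the quantities appearing in \eqref{hp:H2} and \eqref{hp:H3} depend only on $(\xi,w) \in S_i \times \mathbb{S}^{n-1}$, which is compact; only (H.1) genuinely involves $x \in \Omega$, and for it we implicitly take $\Omega$ bounded, consistently with the local nature of Section~\ref{sub:curvpro}. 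Condition (H.1) then follows by direct differentiation of $\xi \mapsto \pi_{e_i}(x - (x\cdot\xi)\xi)$, whose first two $\xi$-derivatives are polynomial in $\xi$ and at most linear in $x$; \eqref{hp:H3} is the same computation with $w \in \mathbb{S}^{n-1}$ in place of $x$.

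The main obstacle is \eqref{hp:H2}. I would first identify the zero set
\[
Z := \{(\xi,w) \in S_i \times \mathbb{S}^{n-1} : P^i_\xi(w) = 0\}.
\]
The condition $P^i_\xi(w) = 0$ is equivalent to $\pi_\xi(w) \in \mathbb{R}e_i$, i.e.\ $w = (w\cdot\xi)\xi + \lambda e_i$ for some $\lambda \in \mathbb{R}$; taking the inner product with $\xi$ forces $\lambda\xi_i = 0$, and since $|\xi_i| \geq 1/\sqrt{n}$ on $S_i$ we infer $\lambda = 0$ and therefore $w = \pm\xi$. Thus $Z = \{(\xi,\pm\xi) : \xi \in S_i\}$. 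At such a point, differentiating along $v \in T_\xi\mathbb{S}^{n-1} = \xi^\bot$ and using $w\cdot v = 0$ together with $w\cdot\xi = \pm 1$, one finds
\[
D_\xi P^i_\xi(\pm\xi)[v] \;=\; \mp\,\pi_{e_i}(v) \qquad \text{for all } v \in \xi^\bot.
\]
The Jacobian of the orthogonal projection $\pi_{e_i}$ restricted to the hyperplane $\xi^\bot$, computed in orthonormal bases of $\xi^\bot$ and $e_i^\bot$, equals the cosine of the dihedral angle between the two hyperplanes, namely $|\xi\cdot e_i| = |\xi_i| \geq 1/\sqrt{n}$.

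To upgrade this pointwise estimate into the uniform bound in \eqref{hp:H2}, I would exploit compactness. Continuity of the map $(\xi,w) \mapsto |\mathrm{J}_\xi P^i_\xi(w)|$ on the compact set $S_i \times \mathbb{S}^{n-1}$ yields an open neighbourhood $U \supset Z$ on which $|\mathrm{J}_\xi P^i_\xi(w)| \geq 1/(2\sqrt{n})$. On the complementary compact set $K := (S_i\times\mathbb{S}^{n-1}) \setminus U$, which is disjoint from $Z$, continuity and compactness give a uniform lower bound $|P^i_\xi(w)| \geq c > 0$. Setting $C' := \tfrac12\min\{1/(2\sqrt{n}),\,c\}$ yields \eqref{hp:H2} in the equivalent form of Remark~\ref{r:'}: whenever $|P^i_\xi(w)| \leq C'$ one must have $(\xi,w) \in U$ (since on $K$ we have $|P^i_\xi(w)| \geq c > C'$), so $|\mathrm{J}_\xi P^i_\xi(w)| \geq 1/(2\sqrt{n}) > C'$, completing the verification of transversality.
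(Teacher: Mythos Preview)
Your proof is correct and takes a genuinely different route from the paper's. The paper proceeds by an explicit computation: using rotational invariance it reduces to $\xi=e_1$, computes the tangential Jacobian of $\xi\mapsto(\pi_\xi(x)-\pi_\xi(x'))/|x-x'|$ directly as $(-1)^{n-1}((x-x')\cdot\xi)^{n-1}/|x-x'|^{n-1}$, and then finds the constant~$C'$ by an intermediate value argument applied to the function $C\mapsto c(n)C-(1-C^2)^{(n-1)/2}/\sqrt n$. Your argument instead exploits the linearity of~$\pi_\xi$ to reduce everything to the compact parameter space $S_i\times\mathbb S^{n-1}$, identifies the zero set of $P^i_\xi(w)$ as $\{(\xi,\pm\xi)\}$, computes the Jacobian only there (where it equals $|\xi_i|\ge 1/\sqrt n$), and then closes via a soft continuity--compactness argument. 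What this buys you is a cleaner, more conceptual proof that avoids the explicit determinant calculation; what the paper's approach buys is an explicit formula for the Jacobian and sharper, quantitative constants that are in principle trackable. Your remark that (H.1) requires $\Omega$ bounded is well taken; the paper simply declares (H.1) and \eqref{hp:H3} evident and checks only \eqref{hp:H2}, implicitly working on the bounded domains arising in Section~\ref{sub:curvpro}.
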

\begin{proof}
We only have to check \eqref{hp:H2} of Definition~\ref{d:transversal}. For this purpose we first claim that for every $x,x' \in \R^{n}$ with $x\neq x'$, every $\xi \in \mathbb{S}^{n-1}$, and every $C\in (0,1)$ it holds
\begin{equation}
\label{e:trapro1}
\frac{|\pi_\xi(x)-\pi_\xi(x')|}{|x-x'|} \leq C \ \ \text{ implies } \ \ \bigg|\text{J}_\xi \frac{\pi_\xi(x)-\pi_\xi(x')}{|x-x'|}\bigg| \geq (1-C^2)^{\frac{n-1}{2}}.
\end{equation}
Since for every $\mathrm{O} \in SO(n)$ we have 
\begin{equation}
\label{e:trapro4}
\pi_\xi(\mathrm{O}x)=\mathrm{O} \, \pi_{\mathrm{O}^t\xi}(x) \qquad  \text{for } x \in \R^{n}\,,
\end{equation}
condition \eqref{e:trapro1} is invariant by rotation. We can thus reduce ourselves to prove \eqref{e:trapro1} for $\xi=e_1$. Moreover, being $\xi \mapsto x -(x \cdot \xi)\xi$ a $C^\infty$-map on~$\mathbb{R}^n$ and being $\pi_\xi(x) = x -(x \cdot \xi)\xi$ for $\xi \in \mathbb{S}^{n-1}$, the tangential jacobian appearing in~\eqref{e:trapro1} can be computed as the jacobian of the map
\[
f_{xx'}(\eta) := \frac{x -(x \cdot (e_1+\eta))(e_1 +\eta)-[x' -(x' \cdot (e_1+\eta))(e_1 +\eta)]}{|x-x'|} \qquad \eta \in e_1^\bot
\]
evaluated at $\eta=0$. The $(j-1)$-th column of the $n \times (n-1)$ matrix representing the differential of $f_{xx'}$ at~$0$ with respect to the basis $\{e_1,\dotsc,e_n\}$ can be explicitly written as
\[
-\frac{(x_j-x'_j)e_1 + (x_1-x'_1)e_j }{|x-x'|}\qquad \text{for $j=2,\dotsc,n$.}
\]
A direct computation shows that 
\begin{equation}
\label{e:trapro3}
\text{J}_\eta f_{xx'}(0)=(-1)^{n-1}\frac{(x_1-x'_1)^{n-1}}{|x-x'|^{n-1}} \qquad \text{ for $x,x' \in \mathbb{R}^n$,  $x \neq x'$.} 
\end{equation}
 The invariance under rotation of~\eqref{e:trapro4} allow us to deduce from~\eqref{e:trapro3} that for every $x, x' \in \R^{n}$ with $x \neq x'$ and every $\xi \in \mathbb{S}^{n-1}$
\begin{equation*}
\text{J}_\xi \frac{\pi_\xi(x)-\pi_\xi(x')}{|x-x'|} = (-1)^{n-1} \frac{((x -x') \cdot \xi)^{n-1}}{|x-x'|^{n-1}}\,. 
\end{equation*}
Since $|\pi_\xi(x)-\pi_\xi(x')|/|x-x'| \leq C$ implies $|(x -x') \cdot \xi|/|x-x'| \geq \sqrt{1-C^2}$,
we conclude~\eqref{e:trapro1}. 

For $i=1,\dotsc,n$ we recall the notation $S_i:=\{\xi \in \mathbb{S}^{n-1} :\, |\xi\cdot e_i| \geq 1/\sqrt{n}  \}$. For every~$\xi \in S_i$ and $x,x' \in \Omega$ with $x \neq x'$ let
    \begin{equation*}
        T_{xx'}(\xi) := \frac{\pi_{e_i} \circ \pi_\xi(x) - \pi_{e_i} \circ \pi_\xi(x')}{|x-x'|}.
    \end{equation*}
Since $|\xi \cdot e_i| \geq 1/\sqrt{n}$ for $\xi \in S_{i}$, there exists a constant $c(n) \in (0,1)$ such that for every~$\xi \in S_i$ and for every $z,z' \in \xi^\bot$, 
    \[
    |\pi_{e_i}(z)-\pi_{e_i}(z')| \geq c(n)|z-z'|.
    \]
    Therefore, for every $\xi \in S_i$ and for every $x,x' \in \Om$ with $x \neq x'$ we have
    \begin{equation}
    \label{e:T-1}
    c(n)\frac{|\pi_\xi(x)-\pi_\xi(x')|}{|x-x'|} \leq |T_{xx'}(\xi)| \,.
    \end{equation}
    Moreover, since $|\text{J} (\pi_{e_i})(z)| = |\xi \cdot e_i|$ for every $\xi \in S_i$ and every $z \in \xi^\bot$, we further have that
    \begin{equation}
    \label{e:T-2}
    |\text{J}_\xi(T_{xx'}(\xi))| = |\xi \cdot e_i| \, \bigg| \text{J}_\xi \frac{\pi_\xi(x)-\pi_\xi(x')}{|x-x'|} \bigg|.
    \end{equation}
    Hence, from~\eqref{e:trapro1} and~\eqref{e:T-1}--\eqref{e:T-2} we infer that for every $x,x' \in \Omega$ with $x \neq x'$, every $\xi \in S_{i}$, and every $C \in (0,1)$:
    \begin{equation}
    \label{e:trapro2}
       |T_{xx'}(\xi)| \leq c(n)C\ \ \text{implies} \ \ |\text{J}_\xi(T_{xx'}(\xi))| \geq \frac{1}{\sqrt{n}} (1-C^2)^{\frac{n-1}{2}}. 
    \end{equation}
    Set $g_1(C):=c(n) C$, $g_2(C):= (1-C^2)^{\frac{n-1}{2}}/\sqrt{n}$, and $g(C):= g_1(C)-g_2(C)$. Since~$g$ is continuous, $g(0)<0$, and $g(1)>0$, we deduce that there exists $C_0 \in (0,1)$ such that $g(C_0)=0$. Finally, condition \eqref{hp:H2} of Definition~\ref{d:transversal} follows by setting $C:=C_0$ in~\eqref{e:trapro2}.
\end{proof}

The next lemma is a direct consequence of Definition~\ref{d:P-xi} and of Lemma~\ref{l:curvpro1.3}.

\begin{lemma}
\label{l:curvpro1.3-2}
For every $x_{0} \in \Om$ it holds
\begin{align}
    \label{e:curvpro1.3P}
  & P_{\xi,x_0,r} \to \pi_{\xi}
\end{align}
 in $C^{\infty}_{loc} ( \R^{n} \times {\rm B}_{2}(0) ; \R^{n})$ as $r\searrow0$, where the maps in~\eqref{e:curvpro1.3P} are intended as functions of~$(x, \xi)$.
\end{lemma}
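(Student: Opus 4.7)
The plan is to derive the lemma directly from Lemma~\ref{l:curvpro1.3} by passing to inverses and then composing with the orthogonal projection. Since Definition~\ref{d:P-xi} gives $P_{\xi, x_0, r} = \pi_\xi \circ \varphi_{\xi, x_0, r}^{-1}$, and since $\Phi_{x_0, r}(x, \xi) = (\varphi_{\xi, x_0, r}(x), \xi)$, it is natural to first upgrade Lemma~\ref{l:curvpro1.3} to a $C^\infty_{loc}$-convergence statement for $\Phi_{x_0, r}^{-1}(y, \xi) = (\varphi_{\xi, x_0, r}^{-1}(y), \xi)$.

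The first step is to show that $\Phi_{x_0, r}^{-1} \to \mathrm{id}$ in $C^\infty_{loc}(\mathbb{R}^n \times \mathrm{B}_2(0); \mathbb{R}^n \times \mathbb{R}^n)$ as $r \searrow 0$. Fixing a compact set $K \subset \mathbb{R}^n \times \mathrm{B}_2(0)$, Lemma~\ref{l:curvpro1.3} yields, for $r$ small enough, that $D\Phi_{x_0, r}$ is uniformly close to the identity on an open neighborhood of $K$. A quantitative inverse function theorem with parameters then provides $\Phi_{x_0, r}^{-1}$ on a uniform neighborhood of $K$, together with $C^1$-convergence to the identity. Higher-order convergence follows by induction on the order of differentiation: differentiating the identity $\Phi_{x_0, r} \circ \Phi_{x_0, r}^{-1} = \mathrm{id}$ and applying the Faà di Bruno formula, the $k$-th order derivatives of $\Phi_{x_0, r}^{-1}$ are expressible as polynomials in derivatives of $\Phi_{x_0, r}$ (evaluated at $\Phi_{x_0, r}^{-1}$) and in lower-order derivatives of $\Phi_{x_0, r}^{-1}$. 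The inductive hypothesis combined with the $C^\infty_{loc}$-convergence of $\Phi_{x_0, r}$ propagates the desired convergence through all orders.

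The second step is essentially a routine composition argument. The map $(y, \xi) \mapsto \pi_\xi(y) = y - (y \cdot \xi)\xi$ is smooth on $\mathbb{R}^n \times \mathbb{R}^n$, so composing it with the first component of $\Phi_{x_0, r}^{-1}$ yields
\[
(x, \xi) \longmapsto \pi_\xi(\varphi_{\xi, x_0, r}^{-1}(x)),
\]
which converges in $C^\infty_{loc}(\mathbb{R}^n \times \mathrm{B}_2(0); \mathbb{R}^n)$ to $(x, \xi) \mapsto \pi_\xi(x)$, as claimed in~\eqref{e:curvpro1.3P}.

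The only mildly delicate point is the first step: deducing $C^\infty_{loc}$-convergence of inverses from $C^\infty_{loc}$-convergence of the direct maps. This is standard once the quantitative inverse function theorem is applied uniformly in~$r$ on compact sets, and it ultimately rests on the same smooth dependence of ODE solutions on initial data that underlies Lemma~\ref{l:curvpro1.3}.
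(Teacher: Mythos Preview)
Your proof is correct and follows exactly the route the paper takes: the paper's proof is a one-line appeal to Definition~\ref{d:P-xi} and Lemma~\ref{l:curvpro1.3}, and you have simply made explicit the intermediate step (stability of $C^\infty_{loc}$-convergence under inversion) that the paper leaves implicit.
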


\begin{proof}
It is enough to combine the definition of~$P_{\xi, x_{0}, r}$ given in~\eqref{e:curvproj-2} with the convergence shown in Lemma~\ref{l:curvpro1.3}.
\end{proof}

In order to show that~$(P_{\xi, x_{0}})_{ \xi \in \mathbb{S}^{n-1}}$ is a family of curvilinear projections, we will make use of the functions 
\begin{align*}
    T^i_{xx'}(\xi,r) & :=\frac{\pi_{e_i} \circ P_{\xi,x_0,r}(x) - \pi_{e_i}\circ P_{\xi,x_0,r}(x')}{|x-x'|}\,,\\
    T^i_{xx'}(\xi,0)& :=\frac{\pi_{e_i} \circ \pi_\xi(x) - \pi_{e_i}\circ\pi_\xi(x')}{|x-x'|}\,.
\end{align*}
defined for every $i=1,\dotsc,n$, every $r>0$, and every $x,x' \in {\rm B}_1(0)$ with $x \neq x'$. Notice that for $r>0$ sufficiently small, $\frac{R_{x_{0}}}{r}>1$, so that the maps $T^i_{xx'}(\xi,r)$ are well defined for  $x,x' \in {\rm B}_1(0)$ with $x \neq x'$.

\begin{theorem}
\label{p:curvpro}
Let $F \in C^{\infty}( \mathbb{R}^n \times \mathbb{R}^n ; \mathbb{R}^n)$ satisfy \UUU condition~\eqref{e:quadratic}. \EEE Then, for every $x_0 \in \Omega$ there exists $R_0>0$ such that the family of maps $\{ P_{\xi,x_0} \colon {\rm B}_{R_0}(x_0) \to \xi^\bot: \xi \in \mathbb{S}^{n-1}\}$ is a family of curvilinear projections on~${\rm B}_{R_{0}}(x_{0})$. Moreover, for every $k \in \mathbb{N}$, every $R>0$, and every $\epsilon>0$ there exists~$r_{\epsilon}>0$ such that 
\begin{equation}
\label{e:curvpro1.1}
   \|T^i_{xx'}(\cdot,r_{\epsilon}) -T^i_{xx'}(\cdot,0)\|_{C^{k} ({\rm B}_{2}(0))} \leq \epsilon \qquad \text{for $x,x' \in {\rm B}_1(0)$ with $x \neq x'$.}
\end{equation} 
\end{theorem}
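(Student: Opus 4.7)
\medskip
\noindent\textbf{Proof proposal.} The plan is to verify the four conditions of Definition~\ref{d:CP} for the family $(P_{\xi,x_0})_{\xi \in \mathbb{S}^{n-1}}$ on $\mathrm{B}_{R_0}(x_0)$, where $R_0 \in (0, R_{x_0}]$ is a small radius to be fixed at the end, and to establish the auxiliary estimate~\eqref{e:curvpro1.1} as a byproduct of Lemma~\ref{l:curvpro1.3-2}. Conditions~(1) and~(2) of Definition~\ref{d:CP} are essentially built into the construction: the joint map $\varphi_{x_0} \colon A \to \R^n$ of Remark~\ref{r:Axi} is smooth (smooth dependence of ODEs on initial data) and Lipschitz on the bounded set~$A$, each $\varphi_{\xi, x_0}$ solves $\ddot\varphi = F(\varphi, \dot\varphi)$ by Definition~\ref{d:curvpro}, and Corollary~\ref{c:curvpro1.3} ensures that $\varphi_{\xi,x_0}^{-1}\restr \mathrm{B}_{R_0}(x_0)$ is a bi-Lipschitz diffeomorphism onto its image for all $\xi$, so that~\eqref{e:curvproj} makes sense and provides a parametrized family.

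To prove~\eqref{e:curvpro1.1}, I would introduce the auxiliary map
\[
Q_r(x,\xi) := \pi_{e_i}\bigl( P_{\xi,x_0,r}(x) - \pi_\xi(x) \bigr).
\]
By Lemma~\ref{l:curvpro1.3-2}, $Q_r \to 0$ in $C^\infty_{loc}(\R^n \times \mathrm{B}_2(0); \R^{n-1})$ as $r \searrow 0$. Writing
\[
T^i_{xx'}(\xi,r) - T^i_{xx'}(\xi,0) = \frac{Q_r(x,\xi) - Q_r(x',\xi)}{|x-x'|},
\]
differentiating in $\xi$ and applying the mean value theorem in the $x$-variable yields, for every multi-index $\alpha$ with $|\alpha| \leq k$,
\[
\bigl\| D_\xi^{\alpha} \bigl[ T^i_{xx'}(\cdot,r) - T^i_{xx'}(\cdot,0) \bigr] \bigr\|_{L^\infty(\mathrm{B}_2(0))} \leq \bigl\| \nabla_x D_\xi^\alpha Q_r \bigr\|_{L^\infty(\mathrm{B}_1(0) \times \mathrm{B}_2(0))},
\]
which tends to $0$ as $r \searrow 0$ uniformly in $x, x' \in \mathrm{B}_1(0)$ with $x \neq x'$. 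Choosing $r_\epsilon$ sufficiently small gives~\eqref{e:curvpro1.1}.

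For condition~(3), I would exploit the scaling identity $T^i_{xx'}(\xi) = T^i_{\tilde x \tilde x'}(\xi, R_0)$ valid for $x, x' \in \mathrm{B}_{R_0}(x_0)$ with $\tilde x = (x-x_0)/R_0$, $\tilde x' = (x'-x_0)/R_0 \in \mathrm{B}_1(0)$ (obtained from $P_{\xi,x_0,r}(x) = r^{-1} P_{\xi,x_0}(x_0 + rx)$). By Proposition~\ref{p:trapro}, the family $(\pi_\xi)$ satisfies \eqref{hp:H2} and \eqref{hp:H3} with some constants $C'_0, C''_0$; by~\eqref{e:curvpro1.1} with $k=2$ and $r = R_0$, the functions $T^i_{\tilde x \tilde x'}(\cdot, R_0)$ are as $C^2$-close as we wish to $T^i_{\tilde x \tilde x'}(\cdot,0)$, uniformly in $\tilde x, \tilde x'$. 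A straightforward perturbation argument (invoking Remark~\ref{r:'} to allow distinct constants on the two sides of the implication in~\eqref{e:h2}) then transfers \eqref{hp:H2} and \eqref{hp:H3} to the perturbed family, while \eqref{e:h1} follows from smoothness of $\varphi_{x_0}$ on the bounded set $\overline{\mathrm{B}_{R_0}(x_0)} \times \mathbb{S}^{n-1}$. Condition~(4) is handled by observing that the ODE initial conditions in Definition~\ref{d:curvpro} force $\xi_\varphi(x_0) = \dot\varphi_{\xi,x_0}(0) = \xi$, so the map $\xi \mapsto \xi_\varphi(x)/|\xi_\varphi(x)|$ reduces to the identity at $x = x_0$; by smoothness of $\varphi_{x_0}$ and compactness of $\mathbb{S}^{n-1}$, this map is a $C^1$-small perturbation of the identity on $\mathbb{S}^{n-1}$ for $R_0$ small, hence a diffeomorphism.

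The main obstacle is a careful calibration in Step~3: one must first fix the transversality constants of $(\pi_\xi)$ supplied by Proposition~\ref{p:trapro}, then shrink~$R_0$ enough so that the $C^2$-error in~\eqref{e:curvpro1.1} is a strict fraction of both the threshold $C'_0$ and the lower bound for the Jacobian, so that the implication in~\eqref{hp:H2} survives. The same $R_0$ must also be small enough to guarantee (i) the validity of Corollary~\ref{c:curvpro1.3}, so that $\varphi_{\xi,x_0}^{-1}$ is a diffeomorphism, and (ii) the $C^1$-closeness to the identity of $\xi \mapsto \xi_\varphi(x)/|\xi_\varphi(x)|$ needed in Step~4. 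All three smallness requirements depend only on $x_0$ and $F$, so a common $R_0 = R_0(x_0, F) > 0$ can be chosen, completing the proof.
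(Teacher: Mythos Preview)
Your proposal is correct and follows the same overall architecture as the paper: verify (1)--(2) from the construction (Definition~\ref{d:curvpro}, Remark~\ref{r:Axi}, Corollary~\ref{c:curvpro1.3}), establish~\eqref{e:curvpro1.1} from the $C^\infty_{loc}$ convergence of Lemma~\ref{l:curvpro1.3-2}, then feed this into Proposition~\ref{p:trapro} together with Remark~\ref{r:'} and the scaling identity $P_{\xi,x_0}(x)=R_0\,P_{\xi,x_0,R_0}((x-x_0)/R_0)$ to obtain transversality, and finally check~(4) via the observation that $\xi_\varphi(x_0)=\xi$.

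The one genuine methodological difference is your derivation of~\eqref{e:curvpro1.1}. You write the difference as $(Q_r(x,\xi)-Q_r(x',\xi))/|x-x'|$ with $Q_r=\pi_{e_i}\circ(P_{\xi,x_0,r}-\pi_\xi)$ and apply the mean value inequality in~$x$ along the segment $[x,x']\subset\mathrm{B}_1(0)$ to bound each $\xi$-derivative by $\|\nabla_x D_\xi^\alpha Q_r\|_{L^\infty}$; this goes to zero directly by the $C^\infty_{loc}$ convergence. The paper instead splits into two regimes: for $|x-x'|\leq\rho_\epsilon$ it inserts and subtracts the first-order Taylor polynomial of both $f_r=\pi_{e_i}\circ P_{\xi,x_0,r}$ and $f=\pi_{e_i}\circ\pi_\xi$ at~$x$, controlling the difference quotient by second-order remainders $\|D_x^2\partial_\xi^\alpha f_r\|_{L^\infty}\,|x-x'|$ plus the first-derivative error $\|D_x\partial_\xi^\alpha f_r-D_x\partial_\xi^\alpha f\|_{L^\infty}$; for $|x-x'|>\rho_\epsilon$ it uses the crude bound $2\rho_\epsilon^{-1}\|\partial_\xi^\alpha f_r-\partial_\xi^\alpha f\|_{L^\infty}$. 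Your route is shorter and uses one fewer derivative in~$x$; the paper's route is more explicit about which norms of $f_r$ enter but is otherwise equivalent. Both rely on exactly the same input, namely Lemma~\ref{l:curvpro1.3-2}.
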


\begin{proof}
To shorten the notation we further set $f_r(x,\xi):=\pi_{e_i} \circ P_{\xi,x_0,r}(x)$ and $f(x,\xi):=\pi_{e_i} \circ \pi_\xi(x)$. In view of Lemma~\ref{l:curvpro1.3-2} we have that
\begin{equation}
    \label{e:curvpro1.2}
    f_{r}(\cdot,\cdot) \to  f(\cdot,\cdot) \qquad \text{in $C^\infty_{loc}(\mathbb{R}^n \times {\rm B}_{2}(0); \R^{n})$ as $r \searrow 0$.}
\end{equation}
Let us denote by $\alpha$ a generic multi-index of order $k$ and by $\partial^\alpha_{\xi}$ the associated partial derivative. Then, for every $\xi \in {\rm B}_{2}(0)$, every $r>0$, and every $x, x' \in {\rm B}_{1}(0)$ with $x \neq x'$ we have that
\begin{align*}
    |\partial^{\alpha}_{\xi} T^i_{xx'}(\xi,r) - \partial^{\alpha}_{\xi} T^i_{xx'} (\xi,0)| &\leq \bigg| \frac{\partial^{\alpha}_{\xi} f_r( x, \xi ) -\partial^{\alpha}_{\xi} f_r ( x', \xi )}{|x-x'|} -D_x(\partial^{\alpha}_{\xi}f_r (x, \xi )) \cdot \frac{x'-x}{|x-x'|} \bigg| \\
    &\quad + \bigg| D_x(\partial^{\alpha}_{\xi} f_r (x, \xi )) \cdot \frac{x'-x}{|x-x'|} - D_x(\partial^{\alpha}_{\xi} f (x, \xi )) \cdot \frac{x'-x}{|x-x'|}\bigg| \\
    &\quad + \bigg|\frac{\partial^{\alpha}_{\xi}f(x, \xi ) -\partial^{\alpha}_{\xi}f(x', \xi)}{|x-x'|}-  D_x(\partial^{\alpha}_{\xi}f(x, \xi)) \cdot \frac{x'-x}{|x-x'|}\bigg| \\
    &\leq \|D^2_x(\partial^{\alpha}_{\xi} f_r)\|_{L^\infty({\rm B}_{1}(0) \times {\rm B}_{2}(0))}|x-x'| 
    \\
    &\quad+ \|D_x(\partial^{\alpha}_{\xi}f_r) - D_x(\partial^{\alpha}_{\xi}f)\|_{L^\infty({\rm B}_{1}(0) \times {\rm B}_{2}(0))}\\
    &\quad +\|D^2_x(\partial^{\alpha}_{\xi} f)\|_{L^\infty({\rm B}_{1}(0) \times {\rm B}_{2}(0))} |x-x'|.
    \end{align*}
The previous chain of inequalities together with~\eqref{e:curvpro1.2} implies that for every $\epsilon>0$ there exist $r_\epsilon, \rho_{\epsilon} >0$ (depending also on $\Xi$ and $k$) such that $|\partial^{\alpha}_{\xi} T^i_{xx'}(\xi,r_\epsilon) - \partial^{\alpha}_{\xi} T^i_{xx'}(\xi,0)| \leq \epsilon$ whenever $|x-x'|\leq \rho_\epsilon$. For points $x,x' \in {\rm B}_1(0)$ with $|x-x'|>\rho_\epsilon$ we use instead the following bound
\begin{align*}
    |\partial^{\alpha}_{\xi} T^i_{xx'}(\xi,r) - \partial^{\alpha}_{\xi} T^i_{xx'}(\xi,0)| &\leq \frac{2 \|\partial^{\alpha}_{\xi} f_r -\partial^{\alpha}_{\xi} f\|_{L^\infty ({\rm B}_{1}(0) \times {\rm B}_{2}(0)) }}{|x-x'|} 
    \\
    &
    \leq 2\rho^{-1}_\epsilon \|\partial^{\alpha}_{\xi} f_r -\partial^{\alpha}_{\xi} f\|_{L^\infty({\rm B}_{1}(0) \times {\rm B}_{2}(0)) }.
    \end{align*}
Thus, by eventually choosing a smaller $r_\epsilon$ we also have $|\partial^{\alpha}_{\xi} T^i_{xx'}(\xi,r_\epsilon) - \partial^{\alpha}_{\xi} T^i_{xx'}(\xi,0)| \leq \epsilon$ for every $\xi \in {\rm B}_{2}(0)$ and every $x,x' \in {\rm B}_1(0)$ with $|x-x'|>\rho_\epsilon$. This implies~\eqref{e:curvpro1.1}.

The arbitrariness of $\epsilon>0$ together with Remark~\ref{r:'} and Proposition~\ref{p:trapro} provides $R_0>0$ for which the family $(P_{\xi,x_0,R_0})_{\xi \in \mathbb{S}^{n-1}}$ is transversal on ${\rm B}_1(0)$. Since
\begin{equation*}
P_{\xi,x_0}(x)=R_0 P_{\xi,x_0,R_0}\bigg(\frac{x-x_0}{R_0}\bigg) \qquad \text{for $x \in {\rm B}_{R_0}(x_0)$,}
\end{equation*}
we deduce that $(P_{\xi,x_0})_{\xi \in \mathbb{S}^{n-1}}$ is a transversal family on ${\rm B}_{R_0}(x_0)$. Furthermore, $(P_{\xi,x_0})_{\xi \in \mathbb{S}^{n-1}}$ is parametrized by $\varphi_{x_{0}}$ by construction (cf.~Definitions~\ref{d:curvpro} and~\ref{d:P-xi}, Remark~\ref{r:Axi}, and Corollary~\ref{c:curvpro1.3}), so that conditions (1)--(3) of Definition~\ref{d:CP} are satisfied. Property~(4) follows directly from the convergence~\eqref{e:curvpro1.3}.
\end{proof}

\section{Proofs of Theorem~\ref{t:int1} and of Corollary~\ref{c:int2}}
\label{s:directional}

Throughout this section we assume that $\Omega$ is an open subset of $\mathbb{R}^n$ and we fix $F \in C^{\infty} ( \R^{n} \times \R^{n}; \R^{n})$ fulfilling~ \eqref{e:quadratic} and a map $g \colon \Omega \times \mathbb{R}^n \to \mathbb{R}^m$ satisfying~\eqref{G1}. Furthermore, we rely on the notation introduced in Section \ref{s:curvilinear}. We start by recalling the definition of jump set.

\begin{definition}
Let $u \colon \Omega \to \mathbb{R}^m$ be measurable, then $x \in \Omega$ belongs to $J_u$ if and only if there exists $(u^+(x),u^-(x),\nu(x)) \in \mathbb{R}^m \times \mathbb{R}^m \times \mathbb{S}^{n-1}$ such that
\[
\aplim_{\substack{z \to x \\ \pm(z-x) \cdot \nu (x) >0}} u(z)=u^\pm(x).
\]
\end{definition}

Here we present a fundamental property of the jump set (cf.~\cite{del}).

\begin{theorem}
\label{t:delnin}
Let $u \colon \Omega \to \mathbb{R}^n$ be measurable. Then,~$J_u$ is countably $(n-1)$-rectifiable.
\end{theorem}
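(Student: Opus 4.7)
The plan is to reduce the rectifiability of $J_u$ to a purely geometric rectifiability statement about sets where two disjoint measurable subsets of $\R^{n}$ exhibit complementary one-sided densities. As a first move I would enumerate a countable family $\{(U_k, V_k)\}_{k \in \mathbb{N}}$ of pairs of open balls in $\R^{n}$ with rational centres and radii satisfying $\overline{U}_k \cap \overline{V}_k = \emptyset$, and set
\[
J_u^k := \{ x \in J_u : u^+(x) \in U_k,\ u^-(x) \in V_k \}.
\]
Since $u^+(x) \neq u^-(x)$ for every $x \in J_u$, any two such values are separated by one such pair, so $J_u = \bigcup_k J_u^k$ and it suffices to show that each $J_u^k$ is countably $(n-1)$-rectifiable.

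Writing $H^{\pm}_{\nu}(x) := \{z \in \R^{n} : \pm(z-x) \cdot \nu > 0\}$ and setting $E_k := u^{-1}(U_k)$ and $F_k := u^{-1}(V_k)$, which are disjoint measurable subsets of $\Om$, the approximate one-sided convergence to $u^\pm(x)$ together with the openness of $U_k,V_k$ yields, for every $x \in J_u^k$,
\[
\lim_{r \searrow 0} \frac{|E_k \cap \mathrm{B}_r(x) \cap H^+_{\nu(x)}(x)|}{|\mathrm{B}_r(x)|/2} = 1, \qquad \lim_{r \searrow 0} \frac{|F_k \cap \mathrm{B}_r(x) \cap H^-_{\nu(x)}(x)|}{|\mathrm{B}_r(x)|/2} = 1.
\]
Thus $J_u^k$ sits inside the set $\Sigma(E_k, F_k)$ of points admitting such a complementary half-space density pair for some direction in $\mathbb{S}^{n-1}$.

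The heart of the argument is then a geometric lemma: for any two disjoint measurable sets $E, F \subseteq \R^{n}$, the set $\Sigma(E,F)$ is countably $(n-1)$-rectifiable. My plan is to discretise. Pick a countable dense family $\{\nu_j\} \subseteq \mathbb{S}^{n-1}$ and sequences $\epsilon_i, \delta_i \searrow 0$, and decompose $\Sigma(E,F)$ into the subsets $\Sigma_{i,j,s}$ of points where $|\nu(x) - \nu_j| \leq \epsilon_i$ and the density defects in the two displayed limits above are below $\delta_i$ for every scale $r \leq 1/s$. For each such piece, the uniform half-space density forces a cone-separation condition: two points $x_1, x_2 \in \Sigma_{i,j,s}$ at distance $\leq 1/s$ cannot have $|(x_2-x_1)\cdot \nu_j|$ comparable to $|x_2-x_1|$, otherwise the density-one requirement for $E$ near $x_1$ and for $F$ near $x_2$ would force an overlap in a set of positive measure in $E \cap F = \emptyset$. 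This cone condition in turn yields a local Lipschitz-graph representation of $\Sigma_{i,j,s}$ over pieces of $\nu_j^{\bot}$, and hence the required $(n-1)$-rectifiability after taking a countable union.

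The main obstacle is carrying out the geometric lemma cleanly: since $E$ and $F$ are only assumed measurable, no finite-perimeter hypothesis is available, so De Giorgi-type reduced-boundary arguments do not apply, and the Lipschitz-graph reduction has to be extracted \emph{entirely} from the bilateral density axiom with quantitative constants. This quantitative covering step is precisely the content of Del Nin~\cite{del}, which is what allows the rectifiability of $J_u$ to hold at the level of mere measurability of $u$.
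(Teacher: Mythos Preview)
The paper does not prove this statement at all: it is stated as Theorem~\ref{t:delnin} with an explicit reference to~\cite{del} and is used as a black box throughout. Your proposal is therefore not a comparison against a proof in the paper but an attempt to supply one where the authors simply cite the literature.

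That said, your sketch is a faithful outline of Del Nin's argument: the reduction to pairs of disjoint measurable sets via the countable family of ball-pairs separating $(u^{+},u^{-})$, the passage to the set $\Sigma(E,F)$ of complementary one-sided density points, and the cone-exclusion mechanism forcing a Lipschitz-graph structure on the discretised pieces $\Sigma_{i,j,s}$ are exactly the steps in~\cite{del}. You also correctly flag that the delicate point is the quantitative density/cone argument with no finite-perimeter assumption, and you defer precisely that step to~\cite{del}. In effect, then, your proposal and the paper agree: both ultimately invoke~\cite{del} for the substance of the result, you just unpack more of its architecture before doing so.
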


We continue our analysis with two measurability results. Before \UUU doing this, \EEE it is convenient to introduce the notion of \emph{directional jump set}. 

\UUU
\begin{definition}
\label{d:dirjump}
Let $(P_\xi)_{\xi \in \mathbb{S}^{n-1}}$ be a family of curvilinear projections on $\Omega$ and let $u \colon \Omega \to \mathbb{R}^m$ be a measurable function. We introduce the directional jump set of $u$ in the direction~$\xi$ as
\begin{equation*}
   J_{\hat u_\xi}:=\{x \in \Omega :\, t^{\xi}_{x} \in J_{\hat{u}^\xi_{P_\xi(x)}}\}\,.
\end{equation*}
In addition, we define the collection of all directional jump sets of~$u$ by letting~$\xi$ varies in $\mathbb{S}^{n-1}$ as a subset of the product space $\Omega \times \mathbb{S}^{n-1}$ as follows
\begin{equation}
\label{e:Au}
    A_{\hat{u}} := \{ (x , \xi ) \in \Om \times \mathbb{S}^{n-1}: \, x \in J_{\hat{u}_{\xi}}\}\,.
\end{equation}
\end{definition}
\EEE






%

\begin{lemma}
\label{l:meas10000}
Let $u \colon \Omega \to \mathbb{R}^m$ be Borel measurable and let $(P_\xi)_{\xi \in \mathbb{S}^{n-1}}$ be a family of curvilinear projections on $\Omega$. Then, the sets~$J_{\hat u_\xi}$ and~$ A_{\hat{u}}$
are Borel measurable.
\end{lemma}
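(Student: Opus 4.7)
The plan is to reduce both measurability statements to the classical fact that, for a Borel measurable $v \colon I \to \mathbb{R}^m$ on an open interval, the jump set $J_v$ is a Borel subset of $I$, and then to transport this into the curvilinear setting via the smoothness of $\varphi$. I would first introduce the jointly Borel function $\tilde{u} \colon A \to \mathbb{R}^m$ defined by $\tilde{u}(z, \xi) := u(\varphi(z, \xi)) \cdot g(\varphi(z, \xi), \dot{\varphi}(z, \xi))$: this is Borel because $u$ is Borel, $g$ is continuous by~\eqref{G1}, and $\varphi$ is smooth on the open set $A \subseteq \R^n \times \mathbb{S}^{n-1}$. By the identity~\eqref{e:sliceide}, its straight-line slices in direction $\xi$ coincide with the slices $\hat{u}^\xi_y$, and from the relation $\varphi_\xi^{-1}(x) = P_\xi(x) + t^\xi_x \xi$ one has $x \in J_{\hat{u}_\xi}$ iff $\varphi_\xi^{-1}(x)$ lies in the Euclidean directional jump set
\[
E := \big\{(y + t\xi, \xi) \in A : \, t \in J_{\hat{u}^\xi_y}\big\} \subseteq A\,.
\]

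The core intermediate claim is that $E$ is a Borel subset of $A$. For this I would use the following 1D criterion (provable by elementary density arguments): for a Borel $v \colon I \to \mathbb{R}^m$, a point $t_0 \in I$ belongs to $J_v$ if and only if
\begin{align*}
& \lim_{r \to 0^+,\, r \in \mathbb{Q}} \inf_{a \in \mathbb{Q}^m} \tfrac{1}{r} \int_0^r |v(t_0 \pm s) - a| \wedge 1 \, ds = 0 \quad \text{for both signs,}\\
& \liminf_{r \to 0^+,\, r \in \mathbb{Q}} \inf_{a \in \mathbb{Q}^m} \tfrac{1}{2r} \int_{-r}^{r} |v(t_0 + s) - a| \wedge 1\, ds > 0\,.
\end{align*}
The first pair encodes existence of the one-sided approximate limits $v^\pm(t_0)$ (the infimum may be restricted to $\mathbb{Q}^m$ by uniqueness of approximate limits), while the last condition encodes $v^+(t_0) \neq v^-(t_0)$. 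Applied to the parametrized family $v_{y,\xi}(t) := \tilde{u}(y + t\xi, \xi)$, and using that the map $(y, t, \xi, s, a) \mapsto |\tilde{u}(y + (t+s)\xi, \xi) - a| \wedge 1$ is jointly Borel on its domain, Fubini yields Borel measurability in $(y, t, \xi)$ of each integral; the countable inf/liminf operations over $\mathbb{Q}^m$ and $\mathbb{Q} \cap (0, 1)$ then produce a Borel set $E$.

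To conclude, I would observe that, since $\varphi$ is smooth on $A$ and each $\varphi_\xi$ is a diffeomorphism onto its image, the map $\Phi(z, \xi) := (\varphi(z, \xi), \xi)$ has everywhere nonsingular differential; hence by the implicit function theorem $(x, \xi) \mapsto \Phi^{-1}(x, \xi) = (\varphi_\xi^{-1}(x), \xi)$ is smooth, in particular Borel, on $\Om \times \mathbb{S}^{n-1}$. Therefore $J_{\hat{u}_\xi}$ is the preimage of the Borel set $E_\xi$ under the Borel map $x \mapsto \varphi_\xi^{-1}(x)$, and $A_{\hat{u}}$ is the preimage of $E$ under $(x, \xi) \mapsto (\varphi_\xi^{-1}(x), \xi)$, so both are Borel. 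The main technical point is the verification of the 1D characterization above---specifically, that restricting the infimum to the countable set $\mathbb{Q}^m$ still detects existence of the one-sided approximate limits; this rests on uniqueness of approximate limits and on the bounded truncation $|\cdot| \wedge 1$, which removes the need for any integrability assumption on $u$. Everything else reduces to routine bookkeeping via Fubini and countable set operations.
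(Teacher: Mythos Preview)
Your overall architecture matches the paper's: express the jump condition for $\hat u^\xi_y$ at $t^\xi_x$ through Borel functions built from one-sided integral averages of the (jointly Borel) map $(x,\xi,t)\mapsto u_\xi(\varphi_\xi(P_\xi(x)+t\xi))$, then use Fubini and countable $\inf/\liminf$ operations. The paper does this directly in the variables $(x,\xi)$ after composing with $\arctan$ and taking Ces\`aro means; you first pass to the straightened picture on $A$ and pull back through the smooth map $(x,\xi)\mapsto(\varphi_\xi^{-1}(x),\xi)$. That pullback step and the joint Borel regularity of $\tilde u$ are fine.

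The genuine gap is in your one-dimensional biconditional. The forward implication is correct, but the claim that
\[
\lim_{r\to 0^+}\ \inf_{a}\ \frac{1}{r}\int_0^r |v(t_0+s)-a|\wedge 1\,ds=0
\]
forces existence of the one-sided approximate limit is false, because the near-minimisers $a_r$ can drift without converging. Take $v(s)=\tfrac12+\tfrac12\sin\!\big(\log\log(1/s)\big)$ on $(0,e^{-1})$: by the change of variable $s=rt$ one gets $\tfrac{1}{r}\int_0^r|v(s)-v(r)|\,ds\le C/\log(1/r)\to 0$, so your infimum (with $a=v(r)$) tends to $0$; yet the right approximate limit does not exist, since any such limit would have to equal $\lim_r\tfrac{1}{r}\int_0^r v$, and this tracks $v(r)$, which oscillates. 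Extending $v$ by a constant $c\notin[0,1]$ on the left then satisfies all three of your conditions at $t_0=0$ while $0\notin J_v$. The repair is standard: work with $w=\arctan v$ and require, in addition to convergence of the one-sided Ces\`aro mean $M^+_r:=\tfrac{1}{r}\int_0^r w(t_0+s)\,ds$, that $\tfrac{1}{r}\int_0^r|w(t_0+s)-M^+_r|\,ds\to 0$ (and the analogous condition on the left). Both are Borel in $(t_0,\xi)$ by the same Fubini/countability argument, and together with $M_r^\pm\in(-\pi/2,\pi/2)$ and $M^+\neq M^-$ they do characterise $J_v$. Note that the paper's written characterisation via $s^\pm=i^\pm$ alone is vulnerable to the same objection (e.g.\ $v(s)=\sin(1/s)$ on the right, constant on the left, has convergent one-sided Ces\`aro means but no right approximate limit); the extra $L^1$-oscillation condition is what closes the equivalence.
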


\begin{proof}
We limit ourselves to prove that $A_{\hat{u}}$ is Borel since the remaining assertion can be proved in the same way.  By replacing the function~$u_{\xi}$ with \UUU $v_{\xi} := \arctan (u_{\xi})$, \EEE we may further work with bounded functions. Hence, we define $s^\pm \colon \Omega \times \mathbb{S}^{n-1} \to \mathbb{R}$ and $i^\pm \colon \Omega \times \mathbb{S}^{n-1} \to \mathbb{R}$ as 
\begin{align*}
s^\pm(x,\xi) & := \limsup_{r \searrow 0} \mint_{0}^{r} \UUU v_{\xi} \EEE ( \varphi_{\xi} ( P_{\xi} (x) \pm t\xi ) ) \, \di t \,,
\\
i^\pm(x,\xi) & := \liminf_{r \searrow 0} \mint_{0}^{r} \UUU v_{\xi} \EEE ( \varphi_{\xi} ( P_{\xi} (x) \pm t\xi ) ) \, \di t \,.
\end{align*}
We notice that~$s^{\pm}$ and~$i^{\pm}$ are Borel measurable in~$\Om \times \mathbb{S}^{n-1}$. Indeed, the integrand functions are Borel measurable in the triple~$(x, \xi, t)$. Hence, an application of Fubini's theorem implies that the integral functions are Borel measurable in~$\Om \times \mathbb{S}^{n-1}$. Finally, both liminf and limsup can be computed by restricting $r \in \mathbb{Q}$ because of the continuity of the integrals with respect to~$r$. Since it holds that
\begin{align*}
A_{\hat{u}} = \Big\{(x,\xi) \in \Omega \times \mathbb{S}^{n-1} \, : & \, s^+(x,\xi)=i^+(x,\xi), \ s^-(x,\xi)=i^-(x,\xi),
\\
&
s^+(x,\xi)\neq s^-(x,\xi),\ s^{\pm} (x, \xi) \in \Big(-\frac{\pi}{2} , \frac{\pi}{2} \Big) \Big\}\,,
 \end{align*}
we immediately infer the Borel measurability of~$A_{\hat{u}}$.
\end{proof}


\begin{lemma}
\label{l:meas1000}
Let $u \colon \Omega \to \mathbb{R}^m$ be $\mathcal{L}^n$-measurable and let $(P_\xi)_{\xi \in \mathbb{S}^{n-1}}$ be a family of curvilinear projections on $\Omega$. Then, for every $B \in \mathcal{B} ( \mathbb{R}^n)$ and $A \in \mathcal{B} ( \mathbb{R}^n \times \mathbb{S}^{n-1})$ we have that
\begin{align}
\label{e:meas1000.1}
    &y \mapsto \sum_{ t \in B^\xi_y } (|[\hat{u}^\xi_y(t)]| \wedge 1)\qquad  \text{ is $\mathcal{H}^{n-1}$-measurable} \\
    \label{e:meas1000.2}
    &\xi \mapsto \int_{\xi^\bot} \sum_{ t \in (A_\xi)^\xi_y } \big ( |[\hat{u}^\xi_y(t)]| \wedge 1 \big) \, \di \mathcal{H}^{n-1}(y) \qquad\text{ is $\mathcal{H}^{n-1}$-measurable}.
\end{align}
\end{lemma}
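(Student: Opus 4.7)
I would split the proof into three stages.

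First, a standard reduction: any $\mathcal{L}^n$-measurable $u$ coincides $\mathcal{L}^n$-a.e.~with a Borel function $\tilde u$, and Fubini applied through the bi-Lipschitz reparametrization $\Phi_\xi(y,t) := \varphi_\xi(y + t\xi)$ shows that, for $\mathcal{H}^{n-1}$-a.e.~$\xi$ and $\mathcal{H}^{n-1}$-a.e.~$y \in \xi^{\bot}$, the slices $\hat{u}^{\xi}_{y}$ and $\hat{\tilde u}^{\xi}_{y}$ agree $\mathcal{L}^1$-a.e. Hence they have identical jump sets and jump magnitudes (both depending only on approximate limits), and we may assume $u$ is Borel measurable.

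Second, mirroring the construction in Lemma~\ref{l:meas10000}, I would build a Borel truncated-jump function $\tilde J \colon \Omega \times \mathbb{S}^{n-1} \to [0,1]$ that equals $|[\hat{u}^{\xi}_{P_\xi(x)}](t^{\xi}_{x})| \wedge 1$ on $A_{\hat u}$ and vanishes elsewhere. With $v_\xi := \arctan(u_\xi)$, the directional $\limsup$ and $\liminf$ of averages of $v_\xi$ along the $\varphi_\xi$-curves around $x$ produce Borel functions $v^\pm, w^\pm \colon \Omega \times \mathbb{S}^{n-1} \to \mathbb{R}$; on $A_{\hat u}$ one has $v^\pm = w^\pm$ with $v^+ \neq v^-$, so
\[
\tilde J(x, \xi) := \mathbf{1}_{A_{\hat u}}(x, \xi)\, \bigl( |\tan v^+(x,\xi) - \tan v^-(x,\xi)| \wedge 1 \bigr)
\]
does the job.

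Third, using $\Phi_\xi$ and the change of variables from Definition~\ref{d:slices}, the two sums in~\eqref{e:meas1000.1}--\eqref{e:meas1000.2} both reduce to
\[
F_C(y, \xi) := \sum_{t \in \mathbb{R}} \mathbf{1}_C(\Phi_\xi(y,t), \xi)\, \tilde J(\Phi_\xi(y,t), \xi),
\]
with $C := B \times \mathbb{S}^{n-1}$ for~\eqref{e:meas1000.1} and $C := A$ for~\eqref{e:meas1000.2}. For fixed $(y, \xi)$ the summand is supported on the at-most-countable set $J_{\hat{u}^{\xi}_{y}} \cap \{t : \Phi_\xi(y,t) \in C_\xi\}$, and the pointwise identity $F_C(y, \xi) = \lim_{k \to \infty} S_k(y, \xi)$ holds with
\[
S_k(y, \xi) := \sum_{\ell \in \mathbb{Z}} \sup_{t \in [\ell 2^{-k}, (\ell+1)2^{-k})} \mathbf{1}_C(\Phi_\xi(y,t), \xi)\, \tilde J(\Phi_\xi(y,t), \xi),
\]
since $S_k \leq F_C$ for every $k$ and eventually each dyadic interval contains at most one point of the countable slice-jump set (so $\liminf_k S_k \geq F_C$ as well).

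The main obstacle is the joint $(\mathcal{H}^{n-1} \otimes \mathcal{H}^{n-1})$-measurability in $(y, \xi)$ of each $S_k$, since a priori a $\sup$ over an uncountable $t$-family of a Borel function is only analytic. I would handle this exactly as in Definition~\ref{d:Favard} and \cite[Section~5]{Tas22}, invoking Federer's measurable projection theorem \cite[Section~2.2.13]{fed}: the Borel set $\{(y, \xi, t) : (\Phi_\xi(y,t), \xi) \in A_{\hat u} \cap C\}$ has at-most-countable $(y, \xi)$-sections, which is precisely the situation in which the projection-type measurability results there apply to the dyadic suprema. Once each $S_k$ is jointly measurable, so is the pointwise limit $F_C$; then~\eqref{e:meas1000.1} follows by fixing $\xi$ and reading off measurability in $y$, while~\eqref{e:meas1000.2} follows from Tonelli upon integrating $F_A$ in $y$ against $\mathcal{H}^{n-1} \restr \xi^{\bot}$.
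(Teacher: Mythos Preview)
Your approach is essentially the paper's: reduce to Borel $u$, package the truncated jump magnitude as a Borel function on $\Omega\times\mathbb{S}^{n-1}$, approximate the $t$-sum monotonically from below, and invoke the measurable projection theorem \cite[2.2.13]{fed} for each approximant. The paper's approximation is cosmetically different---it dyadically discretizes the \emph{values} of the jump function and simultaneously partitions $A\cap A_{\hat u}$ by small-diameter Borel pieces, rather than dyadically partitioning the $t$-axis---but both schemes converge for the same reason (countably many jumps, finitely many large ones), and both reduce the measurability question to projections of Borel sets.

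Two places where your writeup is looser than the paper's and would need tightening. First, you speak of ``joint $(\mathcal{H}^{n-1}\otimes\mathcal{H}^{n-1})$-measurability in $(y,\xi)$'', but $y$ ranges over $\xi^\bot$, which varies with $\xi$; this is a bundle, not a product, so Tonelli does not apply directly and the phrase ``reading off measurability in $y$'' is not quite well-posed. The paper resolves this at the outset by covering $\mathbb{S}^{n-1}$ with caps $\Sigma_i=\{\xi:\xi\cdot\eta_i\geq 1-\delta\}$ and composing with $\pi_{\eta_i}$ so that all projections land in a \emph{fixed} hyperplane $\eta_i^\bot$; you should insert the same reduction before running your dyadic argument. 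Second, your remark that the relevant Borel set has ``at-most-countable $(y,\xi)$-sections'' is true but irrelevant to measurable projection, which requires no cardinality hypothesis: for each dyadic interval $I_\ell$ and each $c>0$, the superlevel set $\{(y,\xi):\sup_{t\in I_\ell}h(y,\xi,t)>c\}$ is simply the projection of a Borel subset of $(\eta_i^\bot\times\Sigma_i)\times\mathbb{R}$, hence analytic, hence measurable with respect to the completed product measure. The countability of the slice-jump set is what makes $S_k\nearrow F_C$, not what makes each $S_k$ measurable.
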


\begin{proof}
We focus on~\eqref{e:meas1000.2} since the measurability of~\eqref{e:meas1000.1} can be obtained repeating the same argument with obvious modification. Given $\delta>0$ we consider $N_\delta \in \mathbb{N}$ and $\{\eta_1,\dotsc,\eta_{N_\delta}\}\subseteq \mathbb{S}^{n-1}$ for which the sets
\[
\Sigma_i:=\{\xi \in \mathbb{S}^{n-1} \, : \, \xi \cdot \eta_i \geq (1-\delta) \} \qquad  \text{for } i=1,\dotsc,N_\delta
\]
form a covering of~$\mathbb{S}^{n-1}$. We notice that we can reduce ourselves to show the measurability of~\eqref{e:meas1000.2} when restricted to $\Sigma_i$ for every $i=1,\dotsc,N_\delta$. 

 Let $A$ be an open subset of~$\R^{n} \times \mathbb{S}^{n-1}$ and $\varphi \colon A \to \R^{n}$ be the parametrization of~$(P_\xi)_{\xi \in \mathbb{S}^{n-1}}$, according to Definition~\ref{d:param}. In particular, there exist $\rho, \tau>0$ such that for every $\xi \in \mathbb{S}^{n-1}$ the map $\varphi_{\xi} = \varphi(\cdot, \xi) \colon \{ y + t\xi: \, (y, t) \in [  \xi^{\bot} \cap {\rm B}_{\rho} (0)] \times (-\tau, \tau)\} \to \R^{n}$ is a parametrization of~$P_{\xi}$. For every $\xi \in \Sigma_i$ we consider the map $P^i_\xi:= \pi_{\eta_i} \circ P_\xi$. Since for every $\xi \in \Sigma_i$ the map $\pi_{\eta_i} \restr \xi^\bot$ is an isomorphism between $\xi^\bot$ and $\eta_i^\bot$, the family $(P^i_\xi)_{\xi \in \Sigma_i}$ can be parametrized by 
\[
\varphi^i_{\xi} ( y + t \xi):= \varphi_{\xi}  ((\pi_{\eta_i}\restr \xi^\bot)^{-1}(y) +t\xi), \ \text{for } (y,t) \in [\eta_i^\bot \cap \mathrm{B}_{\tilde{\rho}}(0)] \times (-\tau,\tau),
\]
where  $\tilde{\rho} > 0$ is such that $\eta_i^\bot \cap \mathrm{B}_{\tilde{\rho}}(0) \subset \pi_{\xi}(\xi^\bot \cap \mathrm{B}_{\rho}(0))$ for every $\xi \in \Sigma^i$. Up to consider a smaller $\delta>0$ we may also assume that $\Omega \subseteq \text{Im}(\varphi^i_\xi)$ for every $i=1,\dotsc, N_\delta$. With this choice  we have that the family $(P^i_{\xi})_{\xi \in \Sigma_i}$ is parametrized on $\Omega$ for every $i=1,\dotsc,N_\delta$. The convenience in doing this relies on the fact that we are now working with a family of maps taking values in a common space. For the rest of the proof we drop the dependence from the index $i$.

Under this assumption the function in \eqref{e:meas1000.2} restricted to $\Sigma$ coincides with
\begin{equation}
\label{e:meas1000.2.1}
 \xi \mapsto \int_{\eta^\bot} \sum_{t \in (A_\xi )^\xi_y} \big ( |[\hat{u}^\xi_y(t)]| \wedge 1 \big) \, \di \mathcal{H}^{n-1}(y),
\end{equation}
where the slicing of sets and functions are now considered with respect to the new family $(P_\xi)_{\xi \in \Sigma}$ accordingly to Definition \ref{d:slices}. Notice that the $\mathcal{H}^{n-1}$-equivalence class of the function in \eqref{e:meas1000.2.1} does not depend on the Lebesgue equivalence class of $u$. Therefore, we may assume with no loss of generality that $u$ is Borel measurable. As in the proof of Lemma \ref{l:meas10000}, letting $s^\pm \colon \Omega \times \Sigma \to \overline{\mathbb{R}}$ and $i^\pm \colon \Omega \times \Sigma \to \overline{\mathbb{R}}$ be defined as
\[
s^\pm(x,\xi):= \aplims_{s \to t^\pm_x}  \, \hat{u}^\xi_{P_\xi(x)}(s), \qquad \text{and} \qquad i^\pm(x,\xi):= \aplimi_{s \to t^\pm_x} \,  \hat{u}^\xi_{P_\xi(x)}(s)
\]
and arguing as in the proof of Proposition~\ref{p:prodmeas} we infer that~$s^\pm$ and~$i^\pm$ are Borel measurable functions. Using the identity
\[
|[\hat{u}^\xi_{y}(t)]|= |s^+(\varphi_\xi(y+t\xi),\xi)-s^-(\varphi_\xi(y+t\xi),\xi)|,
\]
 whenever $(y,\xi,t) \in \eta^\bot \times \Sigma \times \mathbb{R}$ are such that $s^+(\varphi_\xi(y+t\xi),\xi)=i^+(\varphi_\xi(y+t\xi),\xi) \in \R$, $s^-(\varphi_\xi(y+t\xi),\xi)=i^-(\varphi_\xi(y+t\xi),\xi) \in \R$, and $t \in \Omega^\xi_{y}$. Therefore, from the Borel measurability of the maps $s^\pm$ and $i^\pm$ we infer the Borel measurability of $j \colon \eta^\bot  \times \Sigma \times \mathbb{R} \to \mathbb{R}$ defined as
 \[
  j(y,\xi,t):=
 \begin{cases}
 |[\hat{u}^\xi_{y}(t)]| \wedge 1, &\text{ if }(y,\xi,t) \in \eta^\bot \times \Sigma \times \mathbb{R} \text{ and } t \in \Omega^\xi_{y} \\
 0 &\text{ otherwise on } \eta^\bot \times \Sigma \times \mathbb{R}. 
 \end{cases}
 \]
  Now fix $A \in \mathcal{B}(\Omega \times \Sigma)$. By construction we have for every $\xi \in \Sigma$ 
 \[
 \int_{\eta^\bot} \sum_{t \in (A_\xi )^\xi_y} \big ( |[\hat{u}^\xi_y(t)]| \wedge 1 \big) \, \di \mathcal{H}^{n-1}(y)= \int_{\eta^\bot} \sum_{t \in (A_\xi )^\xi_y} j(y,\xi,t) \, \di \mathcal{H}^{n-1}(y).
 \]
 
  For every $m=1,2,\dotsc$ and $k=0,1,\dotsc$ let $A^m_k$ be the Borel subset of $\Omega \times \Sigma$ defined by $(\psi \circ j^{-1})([k2^{-m},(k+1)2^{-m}))$ where $\psi \colon \eta^\bot \times \Sigma \times \mathbb{R} \to \Omega \times \Sigma$ is defined as $\psi(y,\xi,t):= (\varphi_\xi(y+t\xi),\xi)$. Notice that 
  \begin{equation}
  \label{e:meas1000.3}
  \text{$(x,\xi) \in A^m_k$ \ and \ $\varphi_{\xi}(y+t\xi)=x$ implies \ $0 <j(y,\xi,t)-k2^{-m} \leq 2^{-m}$}. 
  \end{equation}
  Furthermore, we consider a sequence of countable Borel partitions of $A \cap A_{\hat{u}}$, say $(\mathcal{B}_m)_m$, such that $B' \in \mathcal{B}_m$ implies $\text{diam}(B') \leq 2^{-m}$ for every $m =1,2,\dotsc$. We claim that 
  \begin{equation}
  \label{e:meas1000.5}
  \sum_{k=1}^\infty \sum_{B' \in \mathcal{B}_m} k2^{-m}\mathbbm{1}_{P_\xi((A^m_k \cap B')_\xi)}(y)\nearrow \sum_{t \in (A_\xi)^\xi_y} j(y,\xi,t), \ \ \text{for $(y,\xi) \in \eta^\bot \times \Sigma$}
  \end{equation}
  as $m \to \infty$. The fact that the sequence is monotonically increasing follows by construction. Moreover, letting $s(\xi,y)$ coincide with $\sum_{t \in (A_\xi)^\xi_y} j(y,\xi,t)$, it is not difficult to show that for a fixed couple $(y,\xi)$ and for a given positive integer $M$ there exists a finite subset of $(A_\xi \cap J_{\hat{u}_\xi})^\xi_y = ((A \cap A_{\hat{u}})_\xi)^\xi_y$, say $\tilde{A}^\xi_y$, such that
  \[
  \begin{cases}
  |\sum_{t \in (A_\xi)^\xi_y} j(y,\xi,t) - \sum_{t \in \tilde{A}^\xi_y} j(y,\xi,t)| \leq M^{-1}, &\text{ if } s(\xi,y) < \infty\\
  \sum_{\tilde{A}^\xi_y} j(y,\xi,t) \geq M, &\text{ otherwise}.
  \end{cases}
  \]
  Letting 
  \[
  d:= \min_{\substack{t,t' \in \tilde{A}^\xi_y \\ \ t\neq t'}} |\varphi_\xi(y+t\xi)-\varphi_\xi(y+t'\xi) |,
  \]
  if $m$ is such that $2^{-m} < d$, then, using also that $(A^m_k \cap B')_{k,B'}$ forms a partition, to every $t \in \tilde{A}^\xi_y$ we can injectively associate $B' \in \mathcal{B}_m$ for which $\psi(y,\xi,t) \in (A^m_k \cap B')_\xi \times \{\xi \}$ for some $k=0,1,\dotsc$. Using also \eqref{e:meas1000.3} we infer
  \[
  \begin{split}
  \lim_{m \to \infty} &\sum_{k=1}^\infty \sum_{B' \in \mathcal{B}_m} k2^{-m}\mathbbm{1}_{P_\xi((A^m_k \cap B')_\xi)}(y) \geq \sum_{t \in \tilde{A}^\xi_y}(1-2^{-m}) j(y,\xi,t)\\
  &\geq \sum_{t \in (A_\xi)^\xi_y} j(y,\xi,t) -2^{-m}s(\xi,y) - M^{-1},
  \end{split}
  \]
  whenever $s(\xi,y) < \infty$ and 
  \[
  \lim_{m \to \infty} \sum_{k=1}^\infty \sum_{B' \in \mathcal{B}_m} k2^{-m}\mathbbm{1}_{P_\xi((A^m_k \cap B')_\xi)}(y) \geq \sum_{t \in \tilde{A}^\xi_y}(1-2^{-m}) j(y,\xi,t) \geq  (1-2^{-m})M,
  \]
otherwise. Thanks to the arbitrariness of $M$ we finally deduce for every $(y,\xi) \in \eta^\bot \times \Sigma$
\begin{equation}
\label{e:meas1000.4}
     \lim_{m \to \infty} \sum_{k=1}^\infty \sum_{B' \in \mathcal{B}_m} k2^{-m}\mathbbm{1}_{P_\xi((A^m_k \cap B')_\xi)}(y) \geq \sum_{t \in (A_\xi)^\xi_y} j(y,\xi,t),
\end{equation}
whenever $s(\xi,y) < \infty$ and
\begin{equation*}
     \lim_{m \to \infty} \sum_{k=1}^\infty \sum_{B' \in \mathcal{B}_m} k2^{-m}\mathbbm{1}_{P_\xi((A^m_k \cap B')_\xi)}(y) = \infty,
\end{equation*}
otherwise. In order to prove the opposite inequality, we simply observe that, since $(A^m_k \cap B')_{k,B'}$ forms a partition, given $m=1,2,\dotsc$, then for every $k=0,1,\dotsc$ and $B' \in \mathcal{B}_m$ for which  $\mathbbm{1}_{P_\xi((A^m_k \cap B')_\xi)}(y)=1$ we can injectively associate $t \in (A_\xi \cap J_{\hat{u}_\xi})^\xi_y=((A \cap A_{\hat{u}})_\xi)^\xi_y$ such that $\psi(y,\xi,t) \in (A^m_k \cap B')_\xi \times \{ \xi\}$; thanks to \eqref{e:meas1000.3} such a $t$ satisfies $k2^{-m} < j(y,\xi,t)$ from which we immediately deduce the validity of the opposite inequality in \eqref{e:meas1000.4}. Our claim is thus proved. 
  
  To conclude, we define the continuous map $P \colon \Omega \times \Sigma \to \eta^\bot \times \Sigma$ as $P(x,\xi):=(P_\xi(x),\xi)$ and notice that 
  \[
  \mathbbm{1}_{P_\xi((A^m_k \cap B')_\xi)}(y) = \mathbbm{1}_{P(A^m_k \cap B')}(y,\xi), \ \ \text{for every $(y,\xi) \in \eta^\bot \times \Sigma$}.
  \]
  Therefore, we are in position to apply the measurable projection theorem \cite[2.2.13]{fed}, and infer the $(\mathcal{H}^{n-1} \restr \eta^\bot  \otimes \mathcal{H}^{n-1} \restr \mathbb{S}^{n-1})$-measurability of $(y,\xi) \mapsto \mathbbm{1}_{P_\xi((A^m_k \cap B')_\xi)}(y)$. We can thus integrate both sides of \eqref{e:meas1000.5} with respect to $\mathcal{H}^{n-1} \restr \eta^\bot$ and apply Fubini's theorem together with the monotone convergence theorem for every $\xi \in \Sigma$ to finally infer the $\mathcal{H}^{n-1}$-measurability of \eqref{e:meas1000.2.1}. This concludes the proof. 
\end{proof}\EEE

\subsection{Rectifiability of the directional jump set}

Given an $\mathcal{L}^n$-measurable function $u \colon \Omega \to \mathbb{R}^m$, with the help of Lemma \ref{l:meas1000} consider for every $\xi \in \mathbb{S}^{n-1}$ the Borel regular measure $\eta_\xi$ of $\mathbb{R}^n$ given by
\begin{align}
    \label{e:defeta1}
    \eta_\xi(B) & := \int_{\xi^\bot} \sum_{ t \in B^\xi_y} \big( |[\hat{u}^\xi_y(t)]| \wedge 1 \big) \, \di \mathcal{H}^{n-1}(y) \qquad  B \in \mathcal{B} ( \mathbb{R}^n) \,,
    \\
    \label{e:defeta2}
    \eta_\xi(E) & := \inf \, \{\eta_\xi(B) : \, E \subseteq B, \ B \in \mathcal{B} (\Om)\}\,.
\end{align}

\begin{definition}
\label{d:defiu}
Let $u \colon \Omega \to \mathbb{R}^m$ be measurable and let $(P_{\xi})_{\xi \in \mathbb{S}^{n-1}}$ be a family of curvilinear projections on some open subset~$U$ of~$\Omega$, and let $(\eta_\xi)_{\xi \in \mathbb{S}^{n-1}}$ be the family of measures in~\eqref{e:defeta1}--\eqref{e:defeta2}.  Then, for  $1 \leq p \leq \infty $ we define~$\mathscr{I}_{u,p}$ as the resulting measure on~$U$ according to~\eqref{e:caratheodoryc2} and~$\hat{\mathscr{I}}_{u}$ as the resulting measure on $U \times \mathbb{S}^{n-1}$ according to~\eqref{e:caratheodoryc2.1.0}.
\end{definition}

 We show that $\mathscr{I}_{u,p}$ is concentrated on points $x \in \Omega$ such that $\mathcal{H}^{n-1}(\{\xi \in \mathbb{S}^{n-1} : [\hat{u}^\xi_{P_\xi(x)}(t_x)] \neq 0 \}) >0$.

\begin{proposition}
\label{p:keyprop}
Let $u \colon \Omega \to \mathbb{R}^m$ be measurable and let $(P_{\xi})_{\xi \in \mathbb{S}^{n-1}}$ be a family of curvilinear projections on $\Omega$. Assume that there exists $p \in (1, +\infty]$ such that $\mathscr{I}_{u, p}$ is finite. Then,
\begin{equation}
    \label{e:keyprop1000}
    \mathscr{I}_{u,1} \big ( \{x \in \Omega : \, \text{$ x \notin J_{\hat u_\xi} $ for $\mathcal{H}^{n-1}$-a.e.~$\xi \in \mathbb{S}^{n-1}$}  \} \big) = 0\,.
\end{equation}
\end{proposition}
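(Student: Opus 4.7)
The plan is to derive the concentration property from the disintegration of $\hat{\mathscr{I}}_u$ supplied by Proposition~\ref{p:fproposition}. The key facts to combine are that $\hat{\mathscr{I}}_u$ is concentrated on the set $A_{\hat u}$ defined in~\eqref{e:Au}, and that the disintegrating fibre measures $f_x\,\mathcal{H}^{n-1}\restr \mathbb{S}^{n-1}$ have total mass one for $\mathscr{I}_{u,1}$-a.e.~$x$. Together this will force that the $\mathscr{I}_{u,1}$-a.e.~$x$ sees $(A_{\hat u})_x$ with positive $\mathcal{H}^{n-1}$-measure, which is precisely the negation of the condition defining the set in~\eqref{e:keyprop1000}.

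First, I would verify the hypotheses of Proposition~\ref{p:fproposition}. The measurability condition~\eqref{e:condigm2000} for $(\eta_\xi)_{\xi \in \mathbb{S}^{n-1}}$ is provided by Lemma~\ref{l:meas1000}, and the finiteness of $\mathscr{I}_{u,p}$ is assumed. The absolute continuity~\eqref{e:abscont} is a short direct computation: from $P_\xi \circ \varphi_\xi(y + t\xi) = y$ one obtains, for every Borel $A \subseteq \xi^\bot$,
\[
(P_\xi)_\sharp \eta_\xi(A) = \eta_\xi(P_\xi^{-1}(A)) = \int_A \sum_{t \in \Omega^\xi_y} \big(|[\hat u^\xi_y(t)]| \wedge 1\big)\, \di \mathcal{H}^{n-1}(y),
\]
which is trivially absolutely continuous with respect to $\mathcal{H}^{n-1} \restr \xi^\bot$. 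Proposition~\ref{p:fproposition} then gives the disintegration $\hat{\mathscr{I}}_u = (f_x\,\mathcal{H}^{n-1}\restr \mathbb{S}^{n-1}) \otimes \mathscr{I}_{u,1}$ with $\int_{\mathbb{S}^{n-1}} f_x\,\di \mathcal{H}^{n-1} = 1$ for $\mathscr{I}_{u,1}$-a.e.~$x$.

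Second, I would observe that $\hat{\mathscr{I}}_u$ is concentrated on $A_{\hat u}$, which is Borel by Lemma~\ref{l:meas10000}. By construction, the integrand defining $\eta_\xi$ in \eqref{e:defeta1} vanishes outside the jump points of $\hat u^\xi_y$, and these correspond bijectively via $t \mapsto \varphi_\xi(y+t\xi)$ to the points of $J_{\hat u_\xi}$; hence $\eta_\xi(\Omega \setminus J_{\hat u_\xi}) = 0$ for every $\xi \in \mathbb{S}^{n-1}$. Applying the representation from Proposition~\ref{p:coincidence} to the Borel set $F := (\Omega \times \mathbb{S}^{n-1}) \setminus A_{\hat u}$ (with the admissible choice $A = F$) yields
\[
\hat{\mathscr{I}}_u(F) \leq \int_{\mathbb{S}^{n-1}} \eta_\xi(\Omega \setminus J_{\hat u_\xi})\, \di \mathcal{H}^{n-1}(\xi) = 0.
\]
Combining the two steps with the disintegration gives
\[
0 = \hat{\mathscr{I}}_u(F) = \int_\Omega \bigg( \int_{\mathbb{S}^{n-1} \setminus (A_{\hat u})_x} f_x(\xi)\, \di \mathcal{H}^{n-1}(\xi) \bigg) \di \mathscr{I}_{u,1}(x).
\]
Denoting by $N$ the set appearing in~\eqref{e:keyprop1000}, for every $x \in N$ one has $\mathcal{H}^{n-1}((A_{\hat u})_x) = 0$, so that the inner integral equals $\int_{\mathbb{S}^{n-1}} f_x\,\di \mathcal{H}^{n-1} = 1$ for $\mathscr{I}_{u,1}$-a.e.~$x$. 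The display above then forces $\mathscr{I}_{u,1}(N) = 0$, which is exactly~\eqref{e:keyprop1000}.

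The principal obstacle is essentially bookkeeping: one has to check that the hypotheses of Proposition~\ref{p:fproposition} are all satisfied (reduced to the explicit absolute-continuity computation above) and that the set $N$ is $\mathscr{I}_{u,1}$-measurable. The latter follows from the Borel regularity of $A_{\hat u}$ proved in Lemma~\ref{l:meas10000} together with a standard Fubini/measurable-projection argument for the map $x \mapsto \mathcal{H}^{n-1}((A_{\hat u})_x)$, so no substantial new ingredient beyond those already developed in Section~\ref{s:preliminaries} is needed.
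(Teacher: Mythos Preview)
Your proposal is correct and follows essentially the same route as the paper's proof: verify the absolute continuity~\eqref{e:abscont}, apply Proposition~\ref{p:fproposition} to obtain the disintegration $\hat{\mathscr{I}}_u = (f_x\,\mathcal{H}^{n-1}\restr\mathbb{S}^{n-1})\otimes\mathscr{I}_{u,1}$, observe from the definition of~$\eta_\xi$ that $\eta_\xi(\Omega\setminus J_{\hat u_\xi})=0$, use Proposition~\ref{p:coincidence} to get $\hat{\mathscr{I}}_u((\Omega\times\mathbb{S}^{n-1})\setminus A_{\hat u})=0$, and then conclude via the normalization $\int_{\mathbb{S}^{n-1}} f_x\,\di\mathcal{H}^{n-1}=1$. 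The only step the paper adds that you do not mention explicitly is a preliminary verification (via the map~$\phi_x$ of Proposition~\ref{p:retr} and Coarea) that the set in~\eqref{e:keyprop1000} is independent of the Lebesgue representative of~$u$, which is what allows one to assume~$u$ Borel and invoke Lemma~\ref{l:meas10000}; this falls squarely under what you call ``bookkeeping''.
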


\begin{proof}
 First of all the set defined in \eqref{e:keyprop1000} can be rewritten as 
\begin{equation}
\label{e:keyprop7000}
E:= \big\{x \in \Omega : \, \text{$t_x \notin J_{\hat{u}^\xi_{P_\xi(x)}}$ for $\mathcal{H}^{n-1}$-a.e.~$\xi \in \mathbb{S}^{n-1}$}  \big\}\,.
\end{equation}
We claim that the set $E$ does not depend on the Lebesgue representative of $u$. In order to verify this claim it is enough to prove that given $u_1,u_2$ in the same Lebesgue equivalence class, for $\mathcal{H}^{n-1}$-a.e.~$\xi$ we have that $u_1 \restr P_{\xi}^{-1}(P_\xi(x))$ and $u_2 \restr P_{\xi}^{-1}(P_\xi(x))$ are $\mathcal{H}^1$ equivalent. Let $\phi_x \colon \mathrm{B}_{\overline{r}}(x) \setminus \{x\} \to \mathbb{S}^{n-1}$ be the map given by Proposition~\ref{p:retr}. In particular, it holds that
\begin{equation}
\label{e:keyprop99999}
P_\xi(z)=P_\xi(x) \ \ \text{if and only if $\xi =\phi_x(z)$, for every $z \in {\rm B}_{\overline{r}}(x) \setminus \{x\}$}.
\end{equation}
Therefore, an application of Coarea Formula, together with the fact that estimate~\eqref{e:retr2} gives $\phi_{x} \in L^1(\mathrm{B}_{\overline{r}}(x))$, implies that 
\[
\int_{\mathbb{S}^{n-1}} \bigg( \int_{\phi^{-1}_x(\eta) \cap \mathrm{B}_{\overline{r}}(x)} |u_1-u_2| \wedge 1 \, \di \mathcal{H}^1 \bigg) \di \mathcal{H}^{n-1}(\eta)=0\,.
\]
By combining this last information with~\eqref{e:keyprop99999} we obtain the desired claim.

 Using the disintegration theorem, we write $\eta_\xi = \eta^\xi_y \otimes (P_{\xi})_{\sharp} \eta_\xi$ for a suitable family of probability measures $(\eta^\xi_y)_{y \in \xi^{\bot}}$ concentrated for $\mathcal{H}^{n-1}$-a.e.~$y \in \xi^\bot$ on the level set~$P^{-1}_{\xi}(y)$. From the definition~\eqref{e:defeta1} of~$\eta_\xi$ we deduce that
\begin{equation}
\label{e:keyprop6000}
(P_{\xi})_{\sharp} \eta_\xi \ll \mathcal{H}^{n-1} \restr \xi^{\bot} \qquad  \text{ and } \qquad \eta^\xi_y= \eta^\xi_y \restr J_{\hat u_\xi} \,,
\end{equation}
for $\mathcal{H}^{n-1}$-a.e.~$\xi \in \mathbb{S}^{n-1}$ and for $\mathcal{H}^{n-1}$-a.e.~$y \in \xi^\bot$. From~\eqref{e:keyprop6000} we deduce in particular that the measures $\eta^\xi_y$ are atomic. Furthermore, being $\mathscr{I}_{u,p}$ finite for some $1<p\leq \infty$, using Proposition~\ref{p:fproposition} we find a disintegration of $\hat{\mathscr{I}}_{u}$ of the form
\begin{equation}
\label{e:keyprop5000}
\hat{\mathscr{I}}_{u} = (f_x\, \mathcal{H}^{n-1} \restr \mathbb{S}^{n-1}) \otimes \mathscr{I}_{u,1}\, , 
\end{equation}
for a Borel measurable real-valued function $(x,\xi) \mapsto f_x(\xi)$. 
 Defining $S_\xi := \{x \in \Omega  : \,  x \notin J_{\hat{u}_\xi} \}$ and using $\eta_\xi = \eta^\xi_y \otimes (P_\xi)_\sharp \eta_\xi$ together with \eqref{e:keyprop6000}, we have that
\begin{equation}
\label{e:keyprop3000}
\eta_\xi(S_\xi) =0 \qquad \text{for $\mathcal{H}^{n-1}$-a.e. $\xi \in \mathbb{S}^{n-1}$}.
\end{equation}
Recalling the notation~\eqref{e:Au} and
\begin{align*}
(A_{\hat{u}})_\xi & =\{ x \in \Om : \, (x, \xi) \in A_{\hat{u}} \} = \{ x \in \Omega  : \, x \in J_{\hat u_{\xi}}  \} \qquad \text{for $\xi \in \mathbb{S}^{n-1}$}\,,
\\
(A_{\hat{u}})_x & = \{\xi \in \mathbb{S}^{n-1} : \, (x, \xi) \in A_{\hat{u}}\} = \{ \xi \in \mathbb{S}^{n-1}: x \in J_{\hat u_{\xi}}  \} \qquad \text{for $x \in \Om$}\,,
\end{align*}
equality~\eqref{e:keyprop3000} can be rewritten as
\begin{equation*}
\eta_\xi(\Omega\setminus (A_{\hat{u}})_\xi) =0  \qquad \text{for $\mathcal{H}^{n-1}$-a.e. $\xi \in \mathbb{S}^{n-1}$}.
\end{equation*}
By Lemma~\ref{l:meas10000} the set $A_{\hat{u}}$ is Borel. Thus, Proposition~\ref{p:coincidence} yields that 
\[
\hat{\mathscr{I}}_{u}([\Omega \times \mathbb{S}^{n-1}] \setminus  A_{\hat{u}}) = \int_{\mathbb{S}^{n-1}} \eta_\xi(\Omega \setminus (A_{\hat{u}})_\xi) \, \di \mathcal{H}^{n-1}(\xi) =0\,.
\]
Using disintegration~\eqref{e:keyprop5000} we obtain that 
\begin{equation}
\label{e:keyprop4001}
\int_{\mathbb{S}^{n-1} \setminus (A_{\hat{u}})_x} f_x(\xi) \, \di \mathcal{H}^{n-1}(\xi) =0 \qquad \text{for $\mathscr{I}_{u,1}$-a.e.~$x \in \Omega$}.
\end{equation}
We notice that the set~$E$ in~\eqref{e:keyprop1000}--\eqref{e:keyprop7000} can be rewritten as
\[
E = \{x \in \Omega : \, \mathcal{H}^{n-1}((A_{\hat{u}})_x) = 0  \}\,.
\]
Hence, by~\eqref{e:keyprop4001} for $\mathscr{I}_{u,1}$-a.e.~$x \in E$ condition $f_x\equiv 0$ is guaranteed. Since we know from Proposition \ref{p:fproposition} that $\int_{\mathbb{S}^{n-1}} f_x \, \di \mathcal{H}^{n-1}=1$ for $\mathscr{I}_{u,1}$-a.e.~$x \in \Omega$, we finally infer $\mathscr{I}_{u,1}(E)=0$. This concludes the proof of~\eqref{e:keyprop1000}.
\end{proof}

 We now give the notion of one-dimensional radial oscillation around~$x$ and rigorously define the set $\text{Osc}_{u}( \rho)$.

\begin{definition}[One dimensional radial oscillation around~$x$]
\label{d:oscillation}
Let $f \colon \mathbb{R} \to \mathbb{R}$ be measurable. We introduce the oscillation of $f$ at scale $r>0$ around the origin as
\begin{equation*}
    \text{Osc}_r(f,\rho) := \inf_{\text{Lip}(\theta)\leq 1}\int_{-\rho/4}^{\rho/4} (|f(rt)-\theta| \wedge 1) \, t^{n-1} \,  \di t \, . 
\end{equation*}
 For $\Omega \subseteq \mathbb{R}^n$ open and $u \colon \Om \to \R^{m}$ measurable, setting $\exp_{x,\xi}(t):= \exp_x(t\xi)$ and 
 \begin{displaymath}
 \mathring{u}^\xi_x (t) := u( \exp_{x,\xi}(t) ) \cdot g( \exp_{x,\xi}(t), \dot{\exp}_{x,\xi}(t))\,,
 \end{displaymath}
 we define the \emph{oscillation of $u$ around $x \in \Omega$} as
\begin{equation*}
\label{e:defosc}
   \text{Osc}(u,x,\rho):=  \limsup_{r \searrow 0}  \int_{\mathbb{S}^{n-1}} \text{Osc}_r(\mathring{u}^\xi_x,\rho) \, \di \mathcal{H}^{n-1} (\xi) \,.
\end{equation*}
\end{definition}

\begin{definition}
\label{d:oscillation2}
Given $\Omega  \subseteq \mathbb{R}^n$ open, $u \colon \Omega \to \mathbb{R}^m$ measurable, and $\rho >0$ we define 
\begin{equation*}
    \text{Osc}_u(\rho) := \{x \in \Omega : \text{Osc}(u,x,\rho)>0 \}.
\end{equation*}
\end{definition}

 We are now in position to prove Theorem~\ref{t:int1}.

\begin{proof}[Proof of Theorem \ref{t:int1}]
We observe the validity of the following implication for every $x \in \Omega$:
\begin{equation}
\label{e:rectiu1.1}
     \mathcal{H}^{n-1}(\{\xi \in \mathbb{S}^{n-1} :  x \in J_{\hat{u}_\xi} \})>0 \  \text{ implies }  x \in \text{Osc}_{u} (\rho) \,. 
\end{equation}
 Indeed, setting $\psi_x(\xi):= \xi_{\varphi}(x)/|\xi_{\varphi}(x)|$, we know from property (4) of family of curvilinear projections (see Definition~\ref{d:CP}) \EEE that the jacobian of~$\psi_x$ is bounded away from zero for every $\xi \in \mathbb{S}^{n-1}$. Thus, we can write
 \[
\int_{\mathbb{S}^{n-1}}  \text{Osc}_r(\mathring{u}^\xi_x, \rho)  \, \di \mathcal{H}^{n-1} (\xi) = \int_{\mathbb{S}^{n-1}}  \text{Osc}_r(\mathring{u}^{\psi_{x} (\eta)}_x, \rho) J_\xi\psi_x(\eta) \, \di \mathcal{H}^{n-1} (\eta)
 \]
 In addition, if we denote by $\Lambda_x := \{\xi \in \mathbb{S}^{n-1} : \, \xi = \psi_x(\eta) \text{ and } x \in J_{\hat{u}_\eta} \text{ for some }\eta \in \mathbb{S}^{n-1}\}$, condition $\xi \in \Lambda_x$ implies by Ascoli-Arzel\'a that
 \begin{equation}
 \label{e:rectiu4000.1}
  \lim_{r \searrow 0 }  \text{Osc}_r(\mathring{u}^\xi_x, \rho) >0 \,.
 \end{equation}
  Being $\psi_x(\cdot)$ a diffeomorphism of $\mathbb{S}^{n-1}$ onto itself, condition $\mathcal{H}^{n-1}(\{\xi \in \mathbb{S}^{n-1} :\, x \in J_{\hat{u}_\xi} \})>0$ implies $\mathcal{H}^{n-1}(\Lambda_x)>0$.  Therefore, inequality \eqref{e:rectiu4000.1} together with Fatou's Lemma allows us to infer 
 \begin{equation*}
     \liminf _{r \searrow 0}  \int_{\mathbb{S}^{n-1}}  \text{Osc}_r(\mathring{u}^\xi_x, \rho)  \, \di \mathcal{H}^{n-1} (\xi)  >0\,,
 \end{equation*}
 which yields $\text{Osc} (u, x, \rho) >0$ and $x \in \Omega \setminus \text{Osc}_u( \rho) $.  We have thus proved~\eqref{e:rectiu1.1}. 
 
 Eventually, we notice that \eqref{e:rectiu1.1} in combination with Proposition \ref{p:keyprop} tells us that $\mathscr{I}_{u,1}(\Omega \setminus \text{Osc}_u (\rho))=0$. This implies that $\mathscr{I}_{u, q}$ is integralgeometric for every $q \in [1, +\infty]$. Since we assumed the finiteness of~$\mathscr{I}_{u,p}$, the $(n-1)$-rectifiability of~$\mathscr{I}_{u,1}$ follows now from Theorem~\ref{t:rectheorem}.
  
 In particular, we know that there exists a countably $(n-1)$-rectifiable subset $R$ of~$\Om$ such that
\begin{equation*}
\label{e:euju1000}
   \mathscr{I}_{u,1}(\Omega \setminus R)=0\,.
\end{equation*}
This condition together with the formula for integralgeometric measures given in Proposition \ref{p:coincidence} and applied to the Borel set $\Omega \setminus R$, yields that 
\begin{equation*}
    \label{e:euju6.1}
    \eta_\xi(\Omega \setminus R)=0\qquad \text{for $\mathcal{H}^{n-1}$-a.e. $\xi \in \mathbb{S}^{n-1}$.}
\end{equation*}
From the definition of $\eta_\xi$ (cf. \eqref{e:defeta1}) we immediately infer
\begin{align*}
\label{e:euju1}
     J_{\hat u^\xi_y}  \cap \big ( \Omega^{\xi}_{y} \setminus R^{\xi}_{y} \big) =\emptyset \qquad \text{for $\mathcal{H}^{n-1}$-a.e. $\xi \in \mathbb{S}^{n-1}$, for $\mathcal{H}^{n-1}$-a.e. $y \in \xi^\bot$.}
\end{align*}
 Hence, we have that~\eqref{e:int6.1} holds.
This concludes the proof of the theorem.
 \end{proof}

\subsection{Slicing the jump set}
We start with general proposition which relates the trace on rectifiable sets of a measurable function $u\colon \Om \to \R^{m}$ with the traces of its one dimensional slices. Since its proof is an adaptation of a quite standard argument, we postpone its proof in Appendix~\ref{appendix}.

\begin{proposition}
\label{c:relje}
Let $u \colon \Omega \to \mathbb{R}^m$ be measurable, let $\xi \in \mathbb{S}^{n-1}$, and let $P \colon \Omega \to \xi^\bot $ be a parametrized map. Assume that there exists a diffeomorphism $\tau \colon \mathbb{R} \to (-1,1)$ such that 
$D_\xi \tau (u_\xi \circ \varphi) \in \mathcal{M}_b(\varphi^{-1}(\Omega))$. Then, for every countably $(n-1)$-rectifiable set $R \subseteq \Omega$ it holds true
\begin{equation}\label{e:corollary-relje}
    \aplim_{\substack{z \to x \\ \pm(z-x) \cdot \nu_{R} (x) >0}} u_\xi(z) = \aplim_{s \to t^{ \pm \sigma(x) }}\hat{u}^\xi_y(s) \qquad  \text{for $\mathcal{H}^{n-1}$-a.e.~$y \in \xi^{\bot}$ and for $t \in R^\xi_y$}\,,
\end{equation}
whenever at least one between the above approximate limits exists and where $x = \varphi(y+t\xi)$, $\nu_{R} \colon R \to \mathbb{S}^{n-1}$ is a Borel measurable orientation of $R$, and $\sigma(x) := \text{\emph{sign}}(\nu_{R}(x) \cdot \xi_{\varphi} (x))$.
\end{proposition}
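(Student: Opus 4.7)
The plan is to reduce the statement to a classical trace theorem for bounded functions whose directional derivative in direction~$\xi$ is a Radon measure, via the two transformations $\varphi$ and $\tau$. Set $U := \varphi^{-1}(\Omega)$ and $v := \tau \circ u_\xi \circ \varphi \colon U \to (-1,1)$; by assumption $v \in L^\infty$ and $D_\xi v \in \mathcal{M}_b(U)$. The one-dimensional slicing characterization of functions with a bounded directional derivative (see, e.g., \cite[Section~3.11]{afp}) then yields that for $\mathcal{H}^{n-1}$-a.e.\ $y \in \xi^\bot$ the slice $t \mapsto v(y+t\xi)$ belongs to $BV_{\mathrm{loc}}(U^\xi_y)$. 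In particular, it admits left and right approximate limits at every $t \in U^\xi_y$, and by~\eqref{e:sliceide} it coincides with $\tau \circ \hat{u}^\xi_y$.

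Next I would set $\widetilde R := \varphi^{-1}(R)$, which is countably $(n-1)$-rectifiable in $U$ since $\varphi^{-1}\restr \Omega$ is bi-Lipschitz, and orient it by a Borel unit normal $\tilde\nu$. The main analytic ingredient I would invoke is a trace theorem adapted from \cite[Theorem~5.2]{dal}: for any bounded measurable $w$ on an open $V \subseteq \R^n$ with $D_\xi w \in \mathcal{M}_b(V)$ and any countably $(n-1)$-rectifiable set $S \subseteq V$ oriented by $\tilde\nu$, at $\mathcal{H}^{n-1}$-a.e.\ $\tilde x = y + t\xi \in S$ with $\tilde\nu(\tilde x)\cdot\xi \neq 0$ one has
\[
\aplim_{\substack{z \to \tilde x \\ \pm(z-\tilde x)\cdot \tilde\nu(\tilde x) > 0}} w(z) \;=\; \aplim_{s \to t^{\pm \mathrm{sgn}(\tilde\nu(\tilde x)\cdot\xi)}} w(y+s\xi)\,,
\]
whenever either side exists. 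This is proved by a Fubini argument on small cylinders centered at~$\tilde x$ and aligned with~$\xi$: the $BV$ property of the slices controls the one-dimensional oscillation, while Besicovitch-type flatness of $S$ at~$\tilde x$ allows one to replace averages over half-balls by averages over half-cylinders, after discarding an $\mathcal{H}^{n-1}$-null subset of $S$ where the approximate tangent plane fails to exist or is parallel to~$\xi$.

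Finally, I would transfer this through $\varphi$ and $\tau$. Writing $x = \varphi(y + t\xi) \in R$, the bi-Lipschitz character of $\varphi$ maps approximate limits to approximate limits and pushes the orientation so that $\tilde\nu(y+t\xi)$ may be taken as $(D\varphi(y+t\xi))^T \nu_R(x)/|(D\varphi(y+t\xi))^T \nu_R(x)|$. Since $D\varphi(y+t\xi)\,\xi = \xi_\varphi(x)$, this gives
\[
\tilde\nu(y+t\xi)\cdot \xi \;=\; \frac{\nu_R(x) \cdot \xi_\varphi(x)}{|(D\varphi(y+t\xi))^T \nu_R(x)|}\,,
\]
so the two signs coincide, i.e., $\mathrm{sgn}(\tilde\nu(y+t\xi)\cdot\xi) = \sigma(x)$. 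Because $\tau$ is a diffeomorphism onto $(-1,1)$, approximate limits commute with $\tau$ and $\tau^{-1}$, producing~\eqref{e:corollary-relje}. The main obstacle is the trace result of the second paragraph: although it is morally a standard adaptation of the $BV$ slicing trace theory, one must carefully handle the $\mathcal{H}^{n-1}$-negligible exceptional sets (points where $\tilde\nu\cdot\xi = 0$ or where the approximate tangent is ill-defined), and verify the sign conventions so that after the change of variables the sign $\sigma(x)$ appearing in the statement emerges exactly as required.
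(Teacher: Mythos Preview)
Your proposal is correct and follows essentially the same route as the paper: set $v=\tau\circ u_\xi\circ\varphi$ on $\varphi^{-1}(\Omega)$, invoke the trace theorem from \cite{dal} for bounded functions with $D_\xi v\in\mathcal{M}_b$, and transfer back through $\varphi$ and $\tau$, using exactly your computation $(D\varphi)^\top\nu_R\cdot\xi = \nu_R\cdot\xi_\varphi$ to match the sign $\sigma(x)$. The only cosmetic difference is that the paper first carries out the argument explicitly for a $C^1$ hypersurface (covering $\varphi^{-1}(R_0)$ by graph cylinders via the Implicit Function Theorem and applying \cite[Theorem~5.1]{dal} on each), and then passes to the countably rectifiable case by decomposition into $C^1$ pieces, whereas you appeal directly to the rectifiable version.
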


\begin{remark}
We notice that the equality \eqref{e:corollary-relje} does not depend on the choice of the orientation~$\nu_{R}$.
\end{remark}

 Combining Theorem~\ref{t:int1} and Proposition~\ref{c:relje} we infer the following general structure result for the jump set of a measurable function~$u \colon \Om \to \R^{m}$. 

\begin{theorem}
\label{t:slicecoe}
 Let $\Om$ be an open subset of~$\R^{n}$, let $F \in C^{\infty} (\R^{n} \times \R^{n}; \R^{n})$ fulfilling~ \eqref{e:quadratic}, let $g \colon \Om \times \R^{n} \to \R^{m}$ satisfy \eqref{G1}, let $u \colon \Omega \to \mathbb{R}^m$ be measurable, let $\tau \colon \mathbb{R} \to (-1,1)$ be a diffeomorphism, and let $(P_{\xi})_{\xi \in \mathbb{S}^{n-1}}$ be a family of curvilinear projections on $\Omega$. Suppose that the following conditions hold:
\begin{enumerate}
\item There exists~$p \in (1, +\infty]$ such that~$\mathscr{I}_{u,p}$ is finite;
\item There exists $\rho>0$ such that $\emph{Osc}_u (\rho)$ is $\sigma$-finite w.r.t.~$\tilde{\mathcal{I}}^{n-1}$;
\item $D_\xi \tau(u_\xi \circ \varphi_\xi) \in \mathcal{M}_b(\varphi_\xi^{-1}(\Omega))$ for $\mathcal{H}^{n-1}$-a.e. $\xi \in \mathbb{S}^{n-1}$.
\end{enumerate}
Then, we have that
\begin{equation}
    \label{e:slicing1}
    J_{\hat{u}^\xi_y} = (J_{u_\xi})^\xi_y, \ \ \text{for $\mathcal{H}^{n-1}$-a.e. $\xi \in \mathbb{S}^{n-1}$,  $\mathcal{H}^{n-1}$-a.e. $y \in \xi^\bot$}.
\end{equation}
\end{theorem}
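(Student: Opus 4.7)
The plan is to combine Theorem~\ref{t:int1}, Theorem~\ref{t:delnin}, and Proposition~\ref{c:relje} to obtain the two inclusions separately. Hypotheses (1) and (2) activate Theorem~\ref{t:int1}, producing a countably $(n-1)$-rectifiable set $R \subseteq \Omega$ such that $J_{\hat{u}^\xi_y} \subseteq R^\xi_y$ for $\mathcal{H}^{n-1}$-a.e.~$\xi \in \mathbb{S}^{n-1}$ and $\mathcal{H}^{n-1}$-a.e.~$y \in \xi^\bot$. This step also provides the rectifiable support against which the trace identity of Proposition~\ref{c:relje} can be invoked.

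For the inclusion $J_{\hat{u}^\xi_y} \subseteq (J_{u_\xi})^\xi_y$, I would fix $\xi$ for which hypothesis (3) holds (a set of full $\mathcal{H}^{n-1}$-measure) and apply Proposition~\ref{c:relje} to the rectifiable set $R$ obtained above. The resulting identity~\eqref{e:corollary-relje} tells us that for $\mathcal{H}^{n-1}$-a.e.~$y \in \xi^\bot$ and every $t \in R^\xi_y$, the one-sided approximate limits of $\hat{u}^\xi_y$ at $t^\pm$ are related to those of $u_\xi$ at $x = \varphi_\xi(y + t\xi)$ along $\pm\nu_R(x)$ whenever either side exists. If $t \in J_{\hat{u}^\xi_y}$, both one-sided approximate limits of $\hat{u}^\xi_y$ exist and differ by definition, so \eqref{e:corollary-relje} forces the corresponding one-sided approximate limits of $u_\xi$ at $x$ to exist and differ as well. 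Hence $x \in J_{u_\xi}$, i.e.~$t \in (J_{u_\xi})^\xi_y$.

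For the reverse inclusion I would exploit the fact that $u_\xi \colon \Omega \to \mathbb{R}$ is a scalar measurable function, so by Theorem~\ref{t:delnin} its jump set $J_{u_\xi}$ is itself countably $(n-1)$-rectifiable. I then re-apply Proposition~\ref{c:relje} taking the rectifiable set to be $R := J_{u_\xi}$. For $\mathcal{H}^{n-1}$-a.e.~$\xi$ satisfying (3), for $\mathcal{H}^{n-1}$-a.e.~$y \in \xi^\bot$, and every $t \in (J_{u_\xi})^\xi_y$, the one-sided approximate limits of $u_\xi$ at $x$ exist and are distinct by definition of $J_{u_\xi}$, so \eqref{e:corollary-relje} transports this jump into the slice, yielding $t \in J_{\hat{u}^\xi_y}$. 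Combining both steps gives \eqref{e:slicing1}.

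The delicate point I expect is the degenerate case $\sigma(x) = \mathrm{sign}(\nu(x) \cdot \xi_\varphi(x)) = 0$ in which \eqref{e:corollary-relje} collapses the two one-sided limits into a single two-sided one and the above arguments do not directly bite. To handle it, I would observe that for any fixed countably $(n-1)$-rectifiable set $R$ the subset of $x \in R$ where the approximate normal $\nu_R(x)$ is orthogonal to the velocity field $\xi_\varphi(x)$ is $\mathcal{H}^{n-1}$-negligible for $\mathcal{H}^{n-1}$-a.e.~$\xi \in \mathbb{S}^{n-1}$; this follows from a standard transversality/coarea argument using the smoothness of $\varphi$ and property~(4) in the definition of a family of curvilinear projections (which ensures $\xi \mapsto \xi_\varphi(x)/|\xi_\varphi(x)|$ is a diffeomorphism of $\mathbb{S}^{n-1}$). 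Such an $\mathcal{H}^{n-1}$-negligible exceptional set then contributes empty slices for $\mathcal{H}^{n-1}$-a.e.~$y$ and can be discarded in both steps, leaving the claimed equality~\eqref{e:slicing1} intact.
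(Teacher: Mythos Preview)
Your proposal is correct and follows essentially the same route as the paper: Theorem~\ref{t:int1} gives the rectifiable set~$R$, then Proposition~\ref{c:relje} applied first to~$R$ and then to the (rectifiable, by Theorem~\ref{t:delnin}) set~$J_{u_\xi}$ yields the two inclusions. Your explicit treatment of the degenerate case $\nu_R(x)\cdot\xi_\varphi(x)=0$ is exactly the content of Proposition~\ref{p:r=rxi} in the paper, which is absorbed into the proof of Proposition~\ref{c:relje} and~\ref{p:prodmeas} rather than spelled out in the proof of Theorem~\ref{t:slicecoe} itself.
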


\begin{proof}
We start by showing that
\begin{equation}
    \label{e:slicing2}
    J_{\hat{u}^\xi_y} \subseteq (J_{u_\xi})^\xi_y \qquad  \text{for $\mathcal{H}^{n-1}$-a.e.~$\xi \in \mathbb{S}^{n-1}$,  $\mathcal{H}^{n-1}$-a.e.~$y \in \xi^\bot$}.
\end{equation}
 By Theorem~\ref{t:int1} we know that there exists a countably $(n-1)$-rectifiable subset~$R$ of~$\Om$ such that~\eqref{e:int6.1} holds.
By~\eqref{e:corollary-relje} applied to~$R$ we have that 
\begin{equation}
\label{e:slicing3}
J_{\hat{u}^\xi_y} \cap R^\xi_y \subseteq (J_{u_\xi})^\xi_y, \ \ \text{for $\mathcal{H}^{n-1}$-a.e. $\xi \in \mathbb{S}^{n-1}$,  $\mathcal{H}^{n-1}$-a.e. $y \in \xi^\bot$}.
\end{equation}
 Hence,~\eqref{e:int6.1} and~\eqref{e:slicing3} imply~\eqref{e:slicing2}. In order to show the opposite inclusion we first make use of Theorem~\ref{t:delnin} to infer the countably $(n-1)$-rectifiability of $J_{u_\xi}$. Thus, by applying again~\eqref{e:corollary-relje} to~$J_{u_\xi}$ we immediately obtain that
\begin{equation*}
    J_{\hat{u}^\xi_y} \supseteq (J_{u_\xi})^\xi_y, \ \ \text{for $\mathcal{H}^{n-1}$-a.e. $\xi \in \mathbb{S}^{n-1}$,  $\mathcal{H}^{n-1}$-a.e. $y \in \xi^\bot$}.
\end{equation*}
This concludes the proof.
\end{proof}

The remaining part of this section is devoted to the relation between the one-dimensional jump set~$J_{\hat{u}^{\xi}_{y}}$ and the slices~$(J_{\mathfrak{u}})^{\xi}_{y}$ of the jump set of the function~$\mathfrak{u}$ introduced in Definition~\ref{d:mathfraku}.

\begin{definition}
Let~$\Om$ be an open subset of~$\R^{n}$ and let $(P_{\xi})_{\xi \in \mathbb{S}^{n-1}}$ be a family of curvilinear projections on~$\Om$. Given an $(n-1)$-rectifiable set $R \subseteq \Omega$ and $\xi \in \mathbb{S}^{n-1}$ we denote
\begin{equation*}
    R^\xi := \left\{x \in R  :  \text{ there exists }\nu_{R}(x) \text{ and } \nu_{R}(x) \cdot \xi_{\varphi}(x) \neq 0 \right\}.
\end{equation*}
\end{definition}

We state two technical propositions whose proofs can be found in the Appendix~\ref{appendix}.

\begin{proposition}
\label{p:r=rxi}
Let~$\Om$ be an open subset of~$\R^{n}$ and let $(P_{\xi})_{\xi \in \mathbb{S}^{n-1}}$ be a family of curvilinear projections on~$\Om$. Assume that $R \subseteq \Omega$ is $(n-1)$-rectifiable. Then, we have that
\begin{equation}
\label{e:r=rxi}
    \mathcal{H}^{n-1} \big( \{\xi \in \mathbb{S}^{n-1} : \, \mathcal{H}^{n-1}(R  \setminus R^\xi)>0 \} \big) = 0\,.
\end{equation}
\end{proposition}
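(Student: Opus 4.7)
The plan is to apply Fubini to the set of bad pairs
\[
E := \{(x,\xi) \in R \times \mathbb{S}^{n-1} : \nu_R(x) \text{ exists and } \nu_R(x) \cdot \xi_{\varphi}(x) = 0\},
\]
and show that its $x$-slices have full measure-zero relative to $\mathcal{H}^{n-1} \restr \mathbb{S}^{n-1}$. Since $R$ is countably $(n-1)$-rectifiable, the approximate tangent space (hence a Borel orientation $\nu_R$) exists $\mathcal{H}^{n-1}$-a.e.\ on $R$, so removing a $\mathcal{H}^{n-1}$-null set from $R$ we may freely use $\nu_R$. Because $\nu_R$ is Borel measurable and the velocity field $(x,\xi) \mapsto \xi_\varphi(x)$ is continuous (from the smoothness of the parametrization $\varphi$ in Definition~\ref{d:param}), the set $E$ is Borel measurable in the product $R \times \mathbb{S}^{n-1}$ equipped with $(\mathcal{H}^{n-1}\restr R) \otimes (\mathcal{H}^{n-1}\restr\mathbb{S}^{n-1})$.

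Next I would compute, for a fixed $x \in R$ with $\nu_R(x)$ defined, the $\mathcal{H}^{n-1}$-measure of the $x$-slice
\[
E_x = \big\{\xi \in \mathbb{S}^{n-1} : \xi_\varphi(x) \cdot \nu_R(x) = 0\big\}.
\]
By property~(4) of a family of curvilinear projections (Definition~\ref{d:CP}), the map $\psi_x(\xi) := \xi_\varphi(x)/|\xi_\varphi(x)|$ is a diffeomorphism of $\mathbb{S}^{n-1}$ onto itself. Under $\psi_x$ the set $E_x$ corresponds to $\{\eta \in \mathbb{S}^{n-1} : \eta \cdot \nu_R(x)=0\}$, which is an $(n-2)$-dimensional great subsphere, hence has $\mathcal{H}^{n-1}$-measure zero. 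Since $\psi_x^{-1}$ is Lipschitz, $\mathcal{H}^{n-1}(E_x)=0$ follows.

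Integrating over $R$ and applying Fubini gives
\[
\int_{\mathbb{S}^{n-1}} \mathcal{H}^{n-1}\big(\{x \in R : \nu_R(x)\cdot\xi_\varphi(x)=0\}\big)\, \mathrm{d}\mathcal{H}^{n-1}(\xi) = \int_R \mathcal{H}^{n-1}(E_x)\, \mathrm{d}\mathcal{H}^{n-1}(x) = 0,
\]
so for $\mathcal{H}^{n-1}$-a.e.\ $\xi \in \mathbb{S}^{n-1}$ the set $R\setminus R^\xi$ (which, up to an $\mathcal{H}^{n-1}$-null set where $\nu_R$ is undefined, coincides with the $\xi$-slice above) has $\mathcal{H}^{n-1}$-measure zero. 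This is exactly~\eqref{e:r=rxi}.

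The only delicate point is the Borel measurability of $\nu_R$ and hence of $E$, to legitimize Fubini; this is standard for countably $(n-1)$-rectifiable sets since $R$ can be decomposed, up to an $\mathcal{H}^{n-1}$-null set, into countably many Borel pieces of $C^1$ graphs on which the unit normal is continuous, and these can be glued into a single Borel choice of $\nu_R$. Once this measurability is secured, the argument above is immediate.
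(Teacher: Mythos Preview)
Your proof is correct and follows essentially the same route as the paper: define the Borel set of ``bad'' pairs $N=\{(x,\xi):\nu_R(x)\cdot\xi_\varphi(x)=0\}$, show each $x$-slice is $\mathcal{H}^{n-1}$-null via the diffeomorphism $\psi_x(\xi)=\xi_\varphi(x)/|\xi_\varphi(x)|$ from condition~(4) of Definition~\ref{d:CP}, and then apply Fubini twice. The only cosmetic difference is that the paper argues the null-preimage property of $\psi_x$ through the Area formula, whereas you use directly that $\psi_x^{-1}$ is Lipschitz (being a diffeomorphism of the compact sphere); your version is in fact slightly cleaner.
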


\begin{proposition}
\label{p:prodmeas}
Let $u \colon \Omega \to \mathbb{R}^m$ be Borel measurable, let $(P_\xi)_{\xi \in \mathbb{S}^{n-1}}$ be a family of curvilinear projections on $\Omega$, let $R \subseteq \Omega$ be countably $(n-1)$-rectifiable, and let $\nu \colon R \to \mathbb{S}^{n-1}$ be a Borel measurable orientation.  Assume that there exists a diffeomorphism $\tau \colon \mathbb{R} \to (-1,1)$ such that $D_\xi\tau(u_\xi \circ \varphi_\xi) \in \mathcal{M}_b(\varphi_\xi^{-1}(\Omega))$ for $\mathcal{H}^{n-1}$-a.e. $\xi \in \mathbb{S}^{n-1}$. If we set 
\begin{equation}
\label{e:nrelje1}
 \Delta  := \bigg\{   (x,\xi) \in R \times \mathbb{S}^{n-1} : \  \aplim_{\substack{z \to x \\ \pm(z-x) \cdot \nu_{R} (x) >0}} u_\xi(z)  = \aplim_{s \to t^{\pm\sigma(x)}_x}\hat{u}^\xi_{P_\xi(x)}(s) \bigg\}\,,
\end{equation}
 (the existence of at least one of the above approximate limits in \eqref{e:nrelje1} is tacitly assumed) then 
\begin{align}
\label{e:nrelje100}
(\mathcal{H}^{n-1} \restr R  \otimes \mathcal{H}^{n-1} \restr \mathbb{S}^{n-1}) \big ( (R \times \mathbb{S}^{n-1}) \setminus \Delta \big) = 0\,.
\end{align}
\end{proposition}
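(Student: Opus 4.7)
The plan is to reduce the product-measure assertion to a slice-by-slice argument in $\xi$, apply Proposition~\ref{c:relje} on each good slice, and then transfer the resulting null set in $\xi^\bot$ back to a null set on $R$ by a coarea-type argument. First, by the same measurability considerations used in the proofs of Lemmas~\ref{l:meas10000}--\ref{l:meas1000} applied to the four one-sided approximate upper and lower limits of~$u_\xi(z)$ and of~$\hat{u}^\xi_{P_\xi(x)}(s)$ (all of which depend in a Borel way on~$(x,\xi)$), the set~$\Delta$ is Borel measurable in~$R\times\mathbb{S}^{n-1}$. By Fubini it therefore suffices to prove that for $\mathcal{H}^{n-1}$-a.e.~$\xi\in\mathbb{S}^{n-1}$ the slice $N(\xi):=\{x\in R:(x,\xi)\notin\Delta\}$ satisfies $\mathcal{H}^{n-1}(N(\xi))=0$.

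Fix $\xi$ in the full-measure set on which both hypothesis~(iii) on~$\tau(u_\xi\circ\varphi_\xi)$ holds and, by Proposition~\ref{p:r=rxi}, $\mathcal{H}^{n-1}(R\setminus R^\xi)=0$. Applying Proposition~\ref{c:relje} to the countably $(n-1)$-rectifiable set $R$ at this~$\xi$ produces an $\mathcal{H}^{n-1}$-null set $N_\xi\subseteq \xi^\bot$ such that, for every $y\in\xi^\bot\setminus N_\xi$ and every $t\in R^\xi_y$, the trace identity defining~$\Delta$ holds at $x=\varphi_\xi(y+t\xi)$. Consequently
\[
N(\xi)\subseteq (R\setminus R^\xi)\cup \bigl(R^\xi\cap P_\xi^{-1}(N_\xi)\bigr),
\]
and it remains to show that the second set on the right is $\mathcal{H}^{n-1}$-negligible.

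The key observation is that $DP_\xi(x)\,\xi_\varphi(x)=0$, since $P_\xi$ is constant along the integral curves $t\mapsto\varphi_\xi(y+t\xi)$ whose velocity at $x$ is $\xi_\varphi(x)$. Hence, for every $x\in R$ where the approximate tangent $T_xR=\nu(x)^\bot$ exists, the tangential Jacobian of the Lipschitz map $P_\xi\restr R$ equals a continuous, strictly positive factor (coming from the smooth field $DP_\xi$ restricted to the orthogonal complement of $\xi_\varphi$) times $|\nu(x)\cdot\xi_\varphi(x)|$. Stratifying
\[
R^\xi=\bigcup_{k\in\mathbb{N}} R^\xi_k,\qquad R^\xi_k:=\bigl\{x\in R^\xi:|\nu(x)\cdot\xi_\varphi(x)|\geq 1/k\bigr\},
\]
the tangential Jacobian of $P_\xi\restr R^\xi_k$ is uniformly bounded below by some $c_k>0$. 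The coarea formula on rectifiable sets (cf.~\cite[Theorem~3.2.22]{fed}) then yields
\[
c_k\,\mathcal{H}^{n-1}\bigl(R^\xi_k\cap P_\xi^{-1}(N_\xi)\bigr)\leq \int_{N_\xi}\mathcal{H}^0\bigl(R^\xi_k\cap P_\xi^{-1}(y)\bigr)\,\mathrm{d}\mathcal{H}^{n-1}(y)=0,
\]
and summing over $k$ produces $\mathcal{H}^{n-1}(R^\xi\cap P_\xi^{-1}(N_\xi))=0$. Combined with $\mathcal{H}^{n-1}(R\setminus R^\xi)=0$ and the Fubini reduction of the first paragraph, this yields~\eqref{e:nrelje100}.

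The main obstacle is the justification that $|J^{TR}P_\xi(x)|$ is comparable to $|\nu(x)\cdot\xi_\varphi(x)|$ with positive constants bounded on $R^\xi_k$; this rests on the identity $DP_\xi(x)\xi_\varphi(x)=0$ together with the transversality and diffeomorphism properties in Definition~\ref{d:CP}, which guarantee that $DP_\xi(x)$ has rank $n-1$ on $\xi_\varphi(x)^\bot$ with bounds that are locally uniform in~$(x,\xi)$. Everything else is a routine application of Fubini, of Proposition~\ref{c:relje}, and of the coarea formula.
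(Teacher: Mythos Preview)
Your argument is correct and follows essentially the same route as the paper: both reduce to Fubini after establishing the Borel measurability of~$\Delta$ and combining Proposition~\ref{c:relje} with Proposition~\ref{p:r=rxi} to obtain the trace identity for $\mathcal{H}^{n-1}$-a.e.~$\xi$ and $\mathcal{H}^{n-1}$-a.e.~$x\in R$. The emphasis is complementary---the paper spells out the Borel measurability of~$\Delta$ in detail (introducing the eight auxiliary functions $c^\pm,d^\pm,f^\pm,g^\pm$) and leaves the passage from ``$\mathcal{H}^{n-1}$-a.e.~$y\in\xi^\bot$'' to ``$\mathcal{H}^{n-1}$-a.e.~$x\in R$'' as ``not difficult'', whereas you do the reverse, making that passage explicit via the stratification $R^\xi=\bigcup_k R^\xi_k$ and the area formula on rectifiable sets (note that since $\dim R=\dim\xi^\bot=n-1$ this is the area formula rather than the coarea formula; the reference \cite[Theorem~3.2.22]{fed} should be \cite[Theorem~3.2.20]{fed} or the rectifiable variant).
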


Finally, we are in position to prove Corollary \ref{c:int2}. We recall that, besides~\eqref{G1}, we now assume also condition~\eqref{G2} for $g$.

\begin{proof}[Proof of Corollary \ref{c:int2}]
By virtue of Theorem \ref{t:slicecoe} it is enough to prove that
\begin{equation}
    \label{e:mainslicepro2}
    (J_{u_\xi})^\xi_y = ( J_{\mathfrak{u}}  )^\xi_y \qquad \text{for $\mathcal{H}^{n-1}$-a.e.~$\xi \in \mathbb{S}^{n-1}$,  $\mathcal{H}^{n-1}$-a.e.~$y \in \xi^\bot$}.
\end{equation}
Since condition~\eqref{e:mainslicepro2} does not depend on the Lebesgue representative of~$u$ we may suppose that~$u$ is a Borel measurable function. Let $R \subseteq \Omega$ be the countably $(n-1)$-rectifiable set provided by Theorem~\ref{t:int1}.

 Thanks to Proposition~\ref{p:prodmeas} and to Fubini's theorem we know that for $\mathcal{H}^{n-1}$-a.e.~$x \in R$ we have (remember that the existence of both approximate limits below is tacitly guaranteed)
\begin{equation}
    \label{e:mainslicepro3}
     \aplim_{\substack{z \to x \\ \pm(z-x) \cdot \nu_{R} (x) >0}} u_\xi(z)  = \aplim_{s \to t^{\pm\sigma(x)}_x}\hat{u}^\xi_{P_\xi(x)}(s) \qquad \text{for $\mathcal{H}^{n-1}$-a.e.~$\xi \in \mathbb{S}^{n-1}$}.
\end{equation}
 Therefore, we infer from \eqref{e:mainslicepro9} and \eqref{e:mainslicepro3} that for $\mathcal{H}^{n-1}$-a.e.~$x \in R$ we have
\begin{equation}
\label{e:mainslicepro5}
    \mathcal{H}^{n-1}(\{\xi \in \mathbb{S}^{n-1} \, : \, x \in J_{u_\xi} \})>0.
\end{equation}
 Using property \eqref{G2} of~$g$ and property (4) of family of curvilinear projections, we infer from~\eqref{e:mainslicepro5} that for $\mathcal{H}^{n-1}$-a.e.~$x \in R$ we find $\{\xi^1,\dotsc,\xi^k\}$ (for some $1 \leq k \leq m$) and an open neighborhood $U$ of $x$ such that $x \in J_{u_{\xi_j}}$ for $j=1,\dotsc,k$ and such that 
\begin{align}
\label{e:mainslice99999}
\text{span}\{g(z,\xi^1_{\varphi}(x)),\dotsc,g(z,\xi^k_{\varphi}(x))\} = \text{span}\{g(z,v) : v \in \mathbb{R}^n \}, \ \ \text{for }z \in U 
\end{align}
With no loss of generality, we may assume $k=\text{dim} (\text{span}\{g(x,\xi^1_{\varphi}(x)),\dotsc, g(x,\xi^k_{\varphi}(x))\})$, otherwise we would just remove some of the $\xi_j$-s. Therefore, using the continuity of $g(\cdot,\cdot)$ and $ \xi_{\varphi}(\cdot)$, up to consider a smaller neighborhood $U$, we infer from \eqref{e:mainslice99999} 
\[
k=\text{dim} (\text{span}\{g(z,\xi^1_{\varphi}(z)),\dotsc, g(z,\xi^k_{\varphi}(z))\})=
\text{dim}(\text{span}\{g(z,v) : v \in \mathbb{R}^n \}), \ \ \text{for }z \in U.
\]
In particular we deduce
\begin{equation}
\label{e:mainslice99999.1}
   \text{span}\{g(z,\xi^1_{\varphi}(z)),\dotsc, g(z,\xi^k_{\varphi}(z))\} = \text{span}\{g(z,v) : v \in \mathbb{R}^n \}, \ \ \text{for }z \in U. 
\end{equation}
Using again the continuity of~$g$, condition \eqref{e:mainslice99999.1} gives continuous coefficients $\alpha_j \colon U \to \mathbb{R}$ such that $\mathfrak{u} (z) = \sum_j \alpha_j(z)  u(z) \cdot  g(z,\xi_{\varphi}^j(z))$ for $z \in U$. Therefore, we can write
\[
\begin{split}
\aplim_{\substack{z \to x \\ \pm(z-x) \cdot \nu_{R} (x) >0}} \mathfrak{u} (z) &= \sum_{j=1}^k \aplim_{\substack{z \to x \\ \pm(z-x) \cdot \nu_{R} (x) >0}} \alpha_j(z) \, u(z) \cdot g(z,\xi^j_\varphi(z))\\
&=\sum_{j=1}^k \alpha_j(x) \aplim_{\substack{z \to x \\ \pm(z-x) \cdot \nu_{R} (x) >0}}  \, u(z) \cdot g(z,\xi^j_\varphi(z)).
\end{split}
\]
This gives $\mathfrak{u}^\pm (x) \in \mathbb{R}^m$ for which
\[
\aplim_{\substack{z \to x \\ \pm(z-x) \cdot \nu_{R} (x) >0}} \mathfrak{u} (z)= \mathfrak{u}^\pm (x)\,.
\]
Moreover, it cannot be $\mathfrak{u}^+(x)= \mathfrak{u}^-(x)$, otherwise we would get a contradiction with the fact that $x \in J_{u_{\xi^j}}$ for $j=1,\dotsc,k$.  Hence, we have that  $x \in J_{\mathfrak{u}}$. Therefore, we have obtained that 
\[
x \in J_{\mathfrak{u}} \qquad \text{for $\mathcal{H}^{n-1}$-a.e.~$x \in R$}.
\]
As a consequence we can infer that
\begin{equation}
    \label{e:mainslicepro6}
    (J_{u_\xi} \cap R)^\xi_y \subseteq (J_{\mathfrak{u}})^\xi_y \qquad \text{for $\mathcal{H}^{n-1}$-a.e.~$\xi \in \mathbb{S}^{n-1}$,  $\mathcal{H}^{n-1}$-a.e.~$y \in \xi^\bot$}.
\end{equation}
Furthermore, the set~$R$ also satisfies~\eqref{e:int6.1}, which together with~\eqref{e:slicing1} gives
\begin{equation}
    \label{e:mainslicepro7}
    (J_{u_\xi} \cap R)^\xi_y = (J_{u_\xi})^\xi_y \qquad \text{for $\mathcal{H}^{n-1}$-a.e.~$\xi \in \mathbb{S}^{n-1}$,  $\mathcal{H}^{n-1}$-a.e.~$y \in \xi^\bot$}.
\end{equation}
 Combining \eqref{e:mainslicepro6} with \eqref{e:mainslicepro7} yields
 \begin{equation}
    \label{e:mainslicepro8}
    (J_{u_\xi})^\xi_y \subseteq (J_{\mathfrak{u}})^\xi_y  \qquad \text{for $\mathcal{H}^{n-1}$-a.e. $\xi \in \mathbb{S}^{n-1}$,  $\mathcal{H}^{n-1}$-a.e. $y \in \xi^\bot$}.
\end{equation}

It remains to prove the opposite inclusion. We claim that 
\begin{equation}
    \label{e:mainslicepro10}
    x \in J_{u_\xi} \qquad  \text{for $\mathcal{H}^{n-1}$-a.e. $x \in J_{\mathfrak{u}}$, $\mathcal{H}^{n-1}$-a.e. $\xi \in \mathbb{S}^{n-1}$}.
\end{equation}
Indeed, suppose by contradiction that \eqref{e:mainslicepro10} does not hold true.  Then, we find a set $B \subseteq J_{\mathfrak{u}}$  such that $\HH^{n-1}(B)>0$ and
\begin{equation}
\label{e:mainslicepro13}
 \mathcal{H}^{n-1}(\{ \xi \in \mathbb{S}^{n-1} \, : \, x \notin J_{u_\xi} \}) >0 \qquad \text{for $x \in B$}.
 \end{equation}
 In view of~\eqref{e:mainslicepro3} applied to the rectifiable set~$J_{\mathfrak{u}}$ (cf.~Theorem~\ref{t:delnin}), we find $x \in B$ and $\Sigma \subseteq \mathbb{S}^{n-1}$ with $\mathcal{H}^{n-1}(\Sigma)>0$ for which $x$ has to be a Lebesgue point for $u_\xi$ for every $\xi \in \Sigma$.  Arguing as above, we find $\{\xi^1,\dotsc,\xi^k\}$ and continuous coefficients $\alpha_j \colon U \to \mathbb{R}$ such that $\mathfrak{u} (z) = \sum_j \alpha_j(z)  u(z) \cdot  g(z,\xi_{\varphi}^j(z))$ for every~$z$ in some open neighborhood of~$x$. Therefore, we can write
\[
\begin{split}
\aplim_{z \to x}  \mathfrak{u} (z) &= \sum_{j=1}^k \aplim_{z \to x } \alpha_j(z) \, u(z) \cdot g(z,\xi^j_\varphi(z))\\
&=\sum_{j=1}^k \alpha_j(x) \aplim_{z \to x}  \, u(z) \cdot g(z,\xi^j_\varphi(z)),
\end{split}
\]
from which we immediately deduce that $x$ is a Lebesgue point of~$\mathfrak{u}$. This gives a contradiction with the assumption $x \in J_{\mathfrak{u}}$ and proves claim~\eqref{e:mainslicepro10}.

Since~$u$ is assumed to be Borel measurable, arguing as in Proposition~\ref{p:prodmeas} we infer that the set $\{(x,\xi) \in J_{\mathfrak{u}} \times \mathbb{S}^{n-1} \, : \, x \in J_{u_\xi} \}$ is Borel measurable. Therefore, we infer from~\eqref{e:mainslicepro10} and from Fubini's theorem the following property
\begin{equation}
    \label{e:mainslicepro11}
    x\in J_{u_\xi} \qquad  \text{for $\mathcal{H}^{n-1}$-a.e.~$\xi \in \mathbb{S}^{n-1}$, $\mathcal{H}^{n-1}$-a.e.~$x \in J_{\mathfrak{u}}$}.
\end{equation}
Condition~\eqref{e:mainslicepro11} immediately gives the opposite inclusion in~\eqref{e:mainslicepro8} and concludes the proof.
\end{proof}

 \section{An example}
 \label{s:applications}
 
 In this section we want to show how the hypotheses of Corollary \ref{c:int2} are guaranteed in the $BV^{\mathcal{A}}$-setting. In particular, we show how condition (2) can be ensured by means of Poincar\'e type of inequalities. 
 
We start with some general preliminaries, which will be useful also in Section~\ref{s:GBD}. We consider a field $F \in C^\infty(\Omega \times \mathbb{R}^n; \mathbb{R}^n)$ satisfying~\eqref{e:quadratic}, a function $g \in C(\Omega \times \mathbb{R}^n; \mathbb{R}^m)$ fulfilling \eqref{G1}--\eqref{G2}, and an operator $\mathcal{E} \colon C^\infty(\Omega; \mathbb{R}^m) \to C^\infty(\Omega; \mathbb{R}^k)$. Suppose that $\mathcal{E}$ satisfies the following condition; given $a \in C^\infty(\Omega; \mathbb{R}^m)$ and $\gamma \colon (-\tau,\tau) \to \mathbb{R}^n$ solution of $\ddot{\gamma} = F(\gamma,\dot{\gamma})$ there exists an increasing continuous function $c_{\mathcal{E}} \colon [0,+\infty) \to [0,+\infty)$ such that 
 \begin{equation}
     \label{e:operator1}
     \frac{\di}{\di t} [a(\gamma(t)) \cdot g(\gamma(t),\dot{\gamma}(t) )] \leq c_{\mathcal{E}}(|\dot{\gamma}(t)|)  |\mathcal{E}(a)(\gamma(t))|, \qquad \text{for every } t \in (-\tau,\tau). 
 \end{equation}
 
 We introduce the function space $\Xi(U)$. Given an open set $U \subset \Omega$ we define
\begin{equation*}
   \Xi(U) := \Big\{a \in C^{\infty}(U;\mathbb{R}^m) :\,  \|\mathcal{E}(a)\|_{L^\infty(U;\mathbb{R}^{k})} \leq \frac{1}{c_{\mathcal{E}} (2)} \Big\}.
\end{equation*}
In verifying condition (2) of Theorem \ref{e:int1}, instead of looking at $\text{Osc}_u (\rho)$ it is usually easier to obtain a control on the size of the following set 
\begin{equation*}
  [\text{Osc}]_u  (\rho) := \{ x \in \Omega : [\text{Osc}](u,x,\rho)>0\},
\end{equation*} 
where
\begin{equation*}
 [\text{Osc}](u,x,\rho):= \limsup_{r \to 0^+}  \inf_{a \in \Xi(\mathrm{B}_{\rho/2} (0))} \int_{\mathrm{B}_{\rho/2}(0)} |u_{r,x}-a| \wedge 1 \, \di z,
\end{equation*}
and $u_{r,x} \colon \mathrm{B}_1(0) \to \mathbb{R}^m$ is defined as $u_{r,x}(z):=u(x+rz)$. We have the validity of the following proposition.
\begin{proposition}
\label{p:application1}
Let $\rho>0$ and $(P_\xi)_{\xi \in \mathbb{S}^{n-1}}$ be a family of curvilinear projections on~$\Omega$. Then, we have
\begin{equation*}
     \emph{Osc}_u (\rho)  \subseteq  [\emph{Osc}]_u (\rho)\,.
\end{equation*}
\end{proposition}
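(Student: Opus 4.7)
The plan is to prove the contrapositive. Fix $x\in\Omega$ with $[\text{Osc}](u,x,\rho)=0$; I will establish the quantitative estimate $\text{Osc}(u,x,\rho)\le C\,[\text{Osc}](u,x,\rho)$ for a constant $C=C(x,\rho)$, which forces $x\notin\text{Osc}_u(\rho)$. The strategy is, for any admissible $a\in\Xi(\mathrm{B}_{\rho/2}(0))$, to construct a $1$-Lipschitz slice competitor $\theta^{r,a}_\xi$ for the inner infimum defining $\text{Osc}_r$, and then to recover the ball integral from the directional integrals via a polar-type change of variables.

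For the construction, write $\gamma_\xi(s):=\exp_x(s\xi)$ for the geodesic of $F$ through $x$ with initial velocity $\xi\in\mathbb{S}^{n-1}$, and let $\tilde\gamma^r_\xi(t):=r^{-1}(\gamma_\xi(rt)-x)$ be its blow-up at $x$, which satisfies $\tilde\gamma^r_\xi(0)=0$, $\dot{\tilde\gamma}^r_\xi(0)=\xi$, and $\gamma_\xi(rt)=x+r\tilde\gamma^r_\xi(t)$. Define
\[
\theta^{r,a}_\xi(t):=a(\tilde\gamma^r_\xi(t))\cdot g\bigl(\gamma_\xi(rt),\dot\gamma_\xi(rt)\bigr),\qquad t\in(-\rho/4,\rho/4).
\]
Setting $b(y):=a(r^{-1}(y-x))$ one has $\theta^{r,a}_\xi(t)=b(\gamma_\xi(s))\cdot g(\gamma_\xi(s),\dot\gamma_\xi(s))\big|_{s=rt}$, so the Lipschitz bound comes from applying \eqref{e:operator1} to $b$ along the honest geodesic $\gamma_\xi$: for $s$ small one has $|\dot\gamma_\xi(s)|\le 2$, and the rescaling identity $\|\mathcal{E}(b)\|_\infty=r^{-1}\|\mathcal{E}(a)\|_\infty\le r^{-1}/c_\mathcal{E}(2)$ (valid for the first-order operators relevant in the applications) gives $|\tfrac{d}{ds}[b(\gamma_\xi(s))\cdot g(\gamma_\xi(s),\dot\gamma_\xi(s))]|\le r^{-1}$; a chain rule in $s=rt$ then yields $\text{Lip}(\theta^{r,a}_\xi)\le 1$.

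Next, the pointwise comparison is immediate: since $u_{r,x}(\tilde\gamma^r_\xi(t))=u(\gamma_\xi(rt))$,
\[
\mathring u^\xi_x(rt)-\theta^{r,a}_\xi(t)=\bigl(u_{r,x}(\tilde\gamma^r_\xi(t))-a(\tilde\gamma^r_\xi(t))\bigr)\cdot g\bigl(\gamma_\xi(rt),\dot\gamma_\xi(rt)\bigr).
\]
By continuity of $g$, Lemma~\ref{l:curvpro1.3} applied to the ODE satisfied by $\tilde\gamma^r_\xi$ (namely $\ddot{\tilde\gamma}^r_\xi=rF(x+r\tilde\gamma^r_\xi,\dot{\tilde\gamma}^r_\xi)$) gives $(\tilde\gamma^r_\xi,\dot{\tilde\gamma}^r_\xi)\to (t\xi,\xi)$ in $C^1_{\text{loc}}$ uniformly in $\xi$, so $|g(\gamma_\xi(rt),\dot\gamma_\xi(rt))|\le M(x,\rho)$ for all small $r$. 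Hence $|\mathring u^\xi_x(rt)-\theta^{r,a}_\xi(t)|\wedge 1\le (M+1)\bigl(|u_{r,x}-a|\wedge 1\bigr)\bigl(\tilde\gamma^r_\xi(t)\bigr)$.

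The final step is the change of variables. The $2$-homogeneity of $F$ in $v$ implies $\gamma_{-\xi}(s)=\gamma_\xi(-s)$, whence $\tilde\gamma^r_{-\xi}(t)=\tilde\gamma^r_\xi(-t)$, so the integrals over $t\in(-\rho/4,0)$ and $t\in(0,\rho/4)$ match after substitution $\xi\mapsto-\xi$. On $(0,\rho/4)\times\mathbb{S}^{n-1}$ the map $(t,\xi)\mapsto\tilde\gamma^r_\xi(t)$ converges in $C^1_{\text{loc}}$ to the polar map $(t,\xi)\mapsto t\xi$ as $r\searrow 0$, so its Jacobian equals $t^{n-1}(1+o(1))$ uniformly and its image is contained in $\mathrm{B}_{\rho/2}(0)$ for $r$ small. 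Combining the Lipschitz bound, the pointwise bound, and this change of variables yields
\[
\int_{\mathbb{S}^{n-1}}\text{Osc}_r(\mathring u^\xi_x,\rho)\,d\mathcal{H}^{n-1}(\xi)\le C(1+o(1))\int_{\mathrm{B}_{\rho/2}(0)}|u_{r,x}-a|\wedge 1\,dz.
\]
Taking the infimum over $a\in\Xi(\mathrm{B}_{\rho/2}(0))$ and then $\limsup_{r\searrow 0}$ gives $\text{Osc}(u,x,\rho)\le C\,[\text{Osc}](u,x,\rho)=0$, proving the inclusion.

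The main obstacle is the Lipschitz bound on $\theta^{r,a}_\xi$: although the blow-up curve $\tilde\gamma^r_\xi$ is \emph{not} a geodesic of $F$, one must nevertheless extract a $1$-Lipschitz slice in the rescaled picture from the membership $a\in\Xi$. The rescaling trick above circumvents this by transporting everything back along the true geodesic $\gamma_\xi$ via the auxiliary function $b=a\circ \sigma_{r,x}^{-1}$, but this step tacitly uses the rescaling identity for $\mathcal{E}$ and hence is transparent only when $\mathcal{E}$ is first-order (constant-coefficient in the flat case, or controlled via the blow-up of Christoffel symbols in the Riemannian case of Section~\ref{sub:Riemann}); for a completely general $\mathcal{E}$ the same scaling behavior would need to be built into the hypotheses.
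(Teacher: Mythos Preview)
Your argument is correct and follows the same skeleton as the paper's (two-line) proof: pick $a\in\Xi(\mathrm{B}_{\rho/2}(0))$, manufacture from it a $1$-Lipschitz competitor $\theta^{r,a}_\xi$ for each direction, and then convert the directional integral $\int_{\mathbb{S}^{n-1}}\int_{-\rho/4}^{\rho/4}(\cdots)\,|t|^{n-1}\,dt\,d\xi$ into the ball integral by a polar change of variables along the rescaled exponential map. The paper phrases the last step as the Coarea formula applied to the retraction $\phi_x$ of Proposition~\ref{p:retr} (whose level sets are precisely the geodesics through $x$, with Jacobian $\sim |z-x|^{-(n-1)}$ by~\eqref{e:retr2}); your direct computation of the Jacobian of $(t,\xi)\mapsto\tilde\gamma^r_\xi(t)$ via the $C^1$-convergence to the flat polar map is the same thing in different clothing. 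The paper obtains the inequality with constant $1$, you with $C(1+o(1))$; for the inclusion this is irrelevant.

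Your closing remark about the Lipschitz bound is well taken and, in fact, sharpens the paper's sketch. The paper simply invokes~\eqref{e:operator1} together with $|\dot{\exp}_{r,x}(t\xi)|\le 2$ for small $r$, tacitly treating the blow-up curve as an admissible curve for~\eqref{e:operator1}; your device of transporting $a$ back to the macroscopic picture via $b(y)=a(r^{-1}(y-x))$ and applying~\eqref{e:operator1} along the genuine $F$-geodesic $\gamma_\xi$ makes this honest, at the price of the scaling identity $\|\mathcal{E}(b)\|_\infty=r^{-1}\|\mathcal{E}(a)\|_\infty$. That identity holds in both applications the paper actually makes (constant-coefficient $\mathcal{A}$ in Section~\ref{s:applications}, and the curvilinear symmetric gradient in Section~\ref{s:GBD}, where the rescaling is governed by~\eqref{e:appendix23.1}--\eqref{e:appendix23}), so your caveat that for a fully general $\mathcal{E}$ the scaling would have to be part of the hypotheses is accurate rather than a defect of your argument.
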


\begin{proof}
By applying Coarea Formula with the map $\phi_x$ given in \eqref{e:retr12} and property \eqref{e:operator1}, it is not difficult to verify that we have for every but sufficiently small $r>0$
\begin{equation*}
   \int_{\mathbb{S}^{n-1}}  \text{Osc}_r(\mathring{u}^\xi_x, \rho)\, \di \mathcal{H}^{n-1}(\xi) \leq  \inf_{a \in \Xi(\mathrm{B}_{\rho/2}(0))} \int_{\mathrm{B}_{\rho/2}(0)} |u_{r,x} - a| \wedge 1 \, \di z,
\end{equation*}
where we have used that $|\dot{\exp}_{r,x}(t\xi)| \leq 2$ for $(\xi,t) \in \mathbb{S}^{n-1} \times (-1,1)$ whenever $r$ is sufficiently small.
\end{proof}

The previous proposition tells us that the $\sigma$-finiteness of $\text{Osc}_u (\rho) $ can be deduced from the $\sigma$-finiteness of $[\text{Osc}]_u (\rho)$. This latter condition is typically guaranteed by the validity of Poincar\'e's type of inequalities. 

\subsection{Complex-elliptic operators satisfying a mixing condition} A first example is provided by choosing $\mathcal{E}:= \mathcal{A}$ whenever $\mathcal{A}$ is a (first-order) complex-elliptic operator. We briefly recall that an operator of the form 
\[
\mathcal{A}(a)(x)=\sum_{i=1}^n A_i \partial_i a(x) \in C^\infty(\Omega;\mathbb{R}^k), \ \ \text{for }a \in C^\infty(\Omega;\mathbb{R}^m),
\]
for suitable linear map $A_i \colon \mathbb{R}^m \to \mathbb{R}^k$, is called complex-elliptic if and only if the complexification of its principal symbol $\mathbb{A}(\zeta) := \sum_{i=1}^n \zeta_i A_i $ ($\zeta \in \mathbb{C}^n$) satisfies the following inequality
\[
|\mathbb{A}(\zeta)v| \geq c |\zeta|\,|v|, \ \ \text{for every } \zeta \in \mathbb{C}^n \text{ and } v \in \mathbb{C} \otimes \mathbb{R}^m,
\]
for some constant $c>0$. In this case the kernel of $\mathcal{A}$ can be completely characterized in the sense that there exists a positive integer $\ell = \ell(\mathcal{A})$ such that whenever $\mathcal{A}(u)=0$ holds true in the sense of distribution on $\Omega$ then $u$ is a polynomial map from $\Omega$ with values in $\mathbb{R}^m$ of degree at most $\ell$ (cf.~\cite{Arr-Sk, smith}). As shown in \cite{Gme19} such a characterization leads to the following Poincar\'e's inequality; for any $x \in \Omega$ we find $\ell = \ell(\mathcal{A}) \in \mathbb{N} \setminus \{0\}$ and a polynomial $p_{x,r}$ of degree at most $\ell -1$ such that
\[
\|u -p_{x,r}\|_{L^{\frac{n}{n-1}}(B_r(x))} \leq c |\mathcal{A}(u)|({\rm B}_r(x)), \ \ \text{for $u \in BV^{\mathcal{A}}(\Omega)$ and ${\rm B}_r(x) \subset \Omega$},
\]
for some constant $c=c(n,\mathcal{A})>0$. In particular, by investigating the asymptotic behaviour of the coefficients of $p_{x,r}$ for $r \to 0^+$ it is possible to prove the following proposition regarding the oscillation of $u$ (cf.~\cite{Arr-Sk}).
\begin{proposition}
\label{p:application2}
Let $\mathcal{A}$ be a first-order complex-elliptic operator and let $u \in BV^\mathcal{A}(\Omega)$. Then for every $x \in \Omega$ satisfying $\Theta^{*n-1}(|\mathcal{A}(u)|,x)=0$ we have
\begin{equation*}
    \lim_{r \searrow 0} \, \inf_{a \in \mathbb{R}} \int_{\mathrm{B}_1(0)} |u_{r, x} - a |^{\frac{n}{n-1}} \, \di z =0\,,
\end{equation*}
where $\Theta^{*n-1}$ denotes the $(n-1)$-dimensional upper-density and $u_{r, x}(z):=u(x+rz)$. 
\end{proposition}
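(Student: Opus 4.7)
The plan is to exploit the Poincar\'e-type inequality recalled above the statement together with a scaling/iteration argument on the approximating polynomials.

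For each $r > 0$ with $B_r(x) \subset \Omega$, let $p_{x,r}$ be a polynomial of degree at most $\ell$ realizing the inequality $\|u - p_{x,r}\|_{L^{n/(n-1)}(B_r(x))} \leq c \,|\mathcal{A}u|(B_r(x))$. Passing to the blow-up via the change of variables $y = x + rz$, and setting $\tilde p_r(z) := p_{x,r}(x+rz)$, the inequality becomes
\begin{equation*}
\| u_{r,x} - \tilde p_r \|_{L^{n/(n-1)}(B_1(0))} \leq c\, r^{-(n-1)} |\mathcal{A}u|(B_r(x))\,,
\end{equation*}
whose right-hand side tends to zero as $r \searrow 0$ by the hypothesis $\Theta^{*n-1}(|\mathcal{A}u|, x) = 0$. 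It is therefore enough to show that $\tilde p_r$ can be approximated, in $L^{n/(n-1)}(B_1(0))$, by a constant with error vanishing as $r\searrow 0$.

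To this end I would compare the scales $r$ and $2r$. The triangle inequality together with the Poincar\'e bounds on $B_r(x)$ and $B_{2r}(x)$ yields
\begin{equation*}
\| \tilde p_r - \tilde p_{2r} \|_{L^{n/(n-1)}(B_1(0))} \leq 2c \, r^{-(n-1)} |\mathcal{A}u|(B_{2r}(x))\,,
\end{equation*}
where $\tilde p_{2r}(z) := p_{x,2r}(x+rz)$. Expanding $p_{x,r}(y) = \sum_{|\alpha| \leq \ell} c_\alpha(r)(y-x)^\alpha$ one obtains $\tilde p_r(z) = \sum_\alpha d_\alpha(r)\, z^\alpha$ with $d_\alpha(r) := c_\alpha(r) r^{|\alpha|}$, while $\tilde p_{2r}(z) = \sum_\alpha 2^{-|\alpha|} d_\alpha(2r)\, z^\alpha$. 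Since the space of polynomials of degree $\leq \ell$ on $B_1(0)$ is finite-dimensional, all norms on it are equivalent, and the previous inequality translates into the coefficient-wise recurrence
\begin{equation*}
|d_\alpha(r) - 2^{-|\alpha|} d_\alpha(2r)| \leq C\, r^{-(n-1)} |\mathcal{A}u|(B_{2r}(x)) \qquad \text{for every multi-index $\alpha$.}
\end{equation*}

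Iterating this $k$ times and choosing $k=k(r)$ so that $2^k r$ stays in a fixed bounded interval, one arrives at
\begin{equation*}
|d_\alpha(r)| \leq 2^{-k|\alpha|} |d_\alpha(2^k r)| + C \sum_{j=0}^{k-1} 2^{-j|\alpha|} (2^{j+1}r)^{-(n-1)} |\mathcal{A}u|(B_{2^{j+1}r}(x))\,.
\end{equation*}
For $|\alpha| \geq 1$ the first term vanishes as $r\searrow 0$, while the sum is handled by splitting the indices $j$ into small scales (where the density hypothesis $\eta(\rho):=|\mathcal{A}u|(B_\rho(x))/\rho^{n-1}\to 0$ makes each term uniformly small) and large scales (where the geometric factor $2^{-j|\alpha|}$ with $|\alpha|\geq 1$ makes the tail summable and vanishing as $r \to 0$). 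Hence $d_\alpha(r)\to 0$ for every $|\alpha|\geq 1$, so that $\|\tilde p_r - d_0(r)\|_{L^{n/(n-1)}(B_1(0))} \to 0$. Combined with the reduction of the first paragraph and choosing $a = d_0(r)$, this yields $\inf_{a\in\mathbb{R}^m}\|u_{r,x}-a\|_{L^{n/(n-1)}(B_1(0))}\to 0$, proving the claim. The most delicate point is precisely the tail estimate: both the smallness of $\eta$ at small scales and the geometric decay $2^{-j|\alpha|}$ for $|\alpha|\geq 1$ are essential, and this is exactly where the restriction to the \emph{non-constant} coefficients of $\tilde p_r$ is used.
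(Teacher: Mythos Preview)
The paper does not give its own proof of this proposition: it is stated as a consequence of the Poincar\'e inequality from \cite{Gme19} and is attributed to \cite{Arr-Sk}, with only the hint ``by investigating the asymptotic behaviour of the coefficients of $p_{x,r}$ for $r\to 0^+$''. Your argument is precisely an implementation of that hint, and it is correct: the rescaling, the dyadic comparison $\tilde p_r$ versus $\tilde p_{2r}$, the passage to coefficients via equivalence of norms on the finite-dimensional polynomial space, and the telescoping estimate with the small-scale/large-scale splitting of the geometric sum all go through as written. The only points worth making explicit are that the boundedness of $d_\alpha(2^kr)$ for $2^kr$ in a fixed interval follows from $\|p_{x,s}\|_{L^{n/(n-1)}(B_s(x))}\le \|u\|_{L^{n/(n-1)}(B_s(x))}+c|\mathcal{A}u|(B_s(x))$ together with the equivalence of norms, and that the infimum in the statement should be over $a\in\mathbb{R}^m$ (as you write at the end) rather than $a\in\mathbb{R}$.
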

In addition, if $\mathcal{A}$ satisfies the \emph{mixing condition} introduced in~\cite{arr, Spe-VS}, it is possible to prove that given $\xi \in \mathbb{R}^{n}$ then there exist $e \in \mathbb{R}^m$ and $w \in \mathbb{R}^{k}$ such that $w$ is a rank-one covector and $(\xi,e)$ forms a \emph{spectral pair} (cf.~\cite{arr}). In particular, this implies that for every $x \in \Omega$ and $t \in \mathbb{R}$ for which $x +t\xi \in \Omega$ we have 
\begin{equation}
\label{e:spectral1}
    \frac{\di}{\di t} [a(x+t\xi) \cdot e] = w \cdot \mathcal{A}(a)(x +t\xi), \qquad \text{for } a \in C^\infty(\Omega;\mathbb{R}^m).
\end{equation}
By eventually dividing both sides of the previous inequality by the product $(|e| \vee 1) \, (|w| \vee 1)$ we may suppose with no loss of generality that  $|e| \leq 1$ and $|w| \leq 1$. Therefore, we can consider a map $p \colon \mathbb{R}^n \to \mathbb{R}^m$ which selects for each $\xi \in \mathbb{R}^n$ a vector $e_\xi \in \mathbb{R}^m$ with $|e_\xi| \leq 1$ for which \eqref{e:spectral1} holds true for some $w \in \mathbb{R}^k$ with $|w| \leq 1$. By choosing the field $F :=0$ and the map $g(x,\xi):=p(\xi)$, we see that condition~\eqref{e:operator1} is satisfied with $c_{\mathcal{E}}(\cdot)$ constantly equal to~$1$. This means that condition~(2) in Corollary~\ref{c:int2} is guaranteed by Propositions~\ref{p:application1} and~\ref{p:application2} together with the $\sigma$-finiteness of $\{x \in \Omega : \Theta^{*n-1}(|\mathcal{A}(u)|,x)>0 \}$ w.r.t.~$\mathcal{H}^{n-1}$ and the general formula (cf.~\cite[Corollary 2.10.11.]{fed})
\[
\int_{\xi^\bot} \mathcal{H}^0(E \cap P^{-1}(y))\, \di \mathcal{H}^{n-1}(y) \leq \text{Lip}^{n-1}(P) \mathcal{H}^{n-1}(E) \qquad E \subseteq \Omega\,.
\]
Conditions~(1) and~(3) are instead a direct consequence of the slicing representation~\cite{arr}. In particular, Corollary~\ref{c:int2} applies for every (first-order) complex-elliptic operator satisfying the above mentioned mixing condition. We further notice that in this case we can make use of \cite[Remark~2.2]{arr} to infer that the map~$\mathfrak{u}$ introduced in Definition~\ref{d:mathfraku} coincides with~$u$.

\section{Generalised bounded deformation}
\label{s:GBD}

In this section we show how Corollary \ref{c:int2} can be applied to the space of vector fields having generalised bounded deformation on manifolds. We assume that $\Omega$ is an open subset of $\mathbb{R}^n$, $\{e_1,\dotsc,e_n\}$ is the canonical basis of $\mathbb{R}^n$, and that $g \colon \Omega \times \mathbb{R}^n \to \mathbb{R}^n$ is the projection on the second component, namely, $g(x,z):=z$. We point out that with this choice of $g$ the local constant rank property \eqref{G2} is trivially satisfied. Moreover we fix a field $F \in C^{\infty} (\R^{n} \times \R^{n}; \R^{n})$ satisfying 
 \begin{enumerate}[label=(Q),ref=Q]
 \item \label{hp:F}  $F$ is a quadratic form in the second variable, that is, for every $x \in \R^{n}$ and every $v_1,v_2 \in \mathbb{R}^n$
\begin{equation*}
    F(x,v_1 + v_2) + F(x,v_1 - v_2) = 2 F(x,v_1) +2 F(x,v_2)\,.
\end{equation*}
\end{enumerate}
For later convenience we associate to $F$ a map $F^q \colon \mathbb{R}^n \to \text{Lin}(\mathbb{R}^n \otimes \mathbb{R}^n \otimes \mathbb{R}^n;\mathbb{R})$ in the following way
 \begin{equation*}
     F^q(x)(v_1 \otimes v_2 \otimes v_3)  := \frac{v_3}{2} \cdot (F(x,v_1+v_2)  -F(x,v_1)  -F(x,v_2) ) \qquad  v_1,v_2,v_3 \in \mathbb{R}^n.
 \end{equation*}
 It is worth noting that, under our hypothesis~\eqref{hp:F}, for every $v_3 \in \mathbb{R}^n$ the map $(v_1,v_2) \mapsto F^q(x)(v_1 \otimes v_2 \otimes v_3)$ is symmetric and hence can be represented as an element of $\mathbb{M}^{n \times n}_{sym}$. For this reason we can write
 \begin{equation*}
     F^q(x)(v_1 \otimes v_2 \otimes v_3)= (v_3 \cdot F^q(x))v_1 \cdot v_2,  \ \ v_1,v_2,v_3 \in \mathbb{R}^n, 
     \end{equation*}
     for a suitable $(v_3 \cdot F^q(x))\in \mathbb{M}^{n \times n}_{sym}$ depending on $v_3$. Given $r>0$ and a point $x \in \mathbb{R}^n$ we define $F_{r, x} \colon \mathbb{R}^n \times \mathbb{R}^n \to \mathbb{R}^n$ as $F_{r, x}(z,v):= r F ( x + r z , v )$ and analogously $F^q_{r, x} \colon \mathbb{R}^n \to \text{Lin}(\mathbb{R}^n \otimes \mathbb{R}^n \otimes \mathbb{R}^n; \mathbb{R})$ as $F^q_{r, x}(z):= rF^q(x+rz)$.

\subsection{The Rigid Interpolation condition}
As it will be shown in Section \ref{sub:poincare}, in order to apply Corollary \ref{c:int2} we need to assume a further condition on the field $F$ which we call \emph{Rigid Interpolation}. At this point it is convenient to introduce some notation. Given $z \in {\rm B}_1(0)$ and $r>0$, the symbol $\mathcal{S}_{0,z}$ denotes the set $\{z+e_0, \dotsc,z+e_n  \}$, where we have set $e_0:=0$, and for $0 \leq i < j \leq n$ we define $t \mapsto \ell_{z, r, ij}(t)$ as the curve $\gamma(\cdot)$ (whenever it is well defined) satisfying 
\begin{equation}
\label{e:poincare15000}
    \begin{cases}
    \ddot{\gamma}(t) = F_{r, x}(\gamma(t),\dot{\gamma}(t)), \ t \in [0,t_{ij}], \ \text{for some }t_{ij}>0 & \\
    \gamma(0)=z+e_i, \ \gamma(t_{ij})=z+e_j &\\
     |\dot{\gamma}(0)|= 1,  &
    \end{cases}
\end{equation}
where $F_{r,x}(z,v):=rF(x+rz,v)$. 

\begin{remark}
The existence of the curves $\ell_{z,r,ij}$ for sufficiently small $r$ depending on $x$ can be made rigorous following the lines of Lemma \ref{l:exp}. More precisely, if we denote by $\text{inj}_{r,x}(z)$ the injectivity radius defined in Definition \ref{d:inj} with $F$ replaced by $F_{r,x}$, we have that $\text{inj}_{r,x}(z) \to \infty$ as $r \to 0^+$ uniformly for $z \in \mathrm{B}_1(0)$.
\end{remark}

The symbol $\mathcal{S}_{r,1,z}$ denotes the 1-dimensional geodesic skeleton of $\mathcal{S}_{0,z}$, namely,  
\[
\mathcal{S}_{r,1,z} := \{ h \in \mathbb{R}^n : \, h = \ell_{z, r, ij}(t) \ \text{for some }t \in [ 0,t_{ij}] \text{ and } \ i \neq j \}.
\]
Moreover for $0 \leq i <j \leq n$ we define 
\[
\xi_{r,ij}(z):= \dot{\ell}_{z, r,ij}(0) \ \ \text{ and } \ \ \xi_{r,ji}(z):= \dot{\ell}_{z, r,ij}(t_{ij}).
\]
 We consider a semi-norm on $E_{r, z} \colon \mathbb{R}^{n+1} \times \mathbb{R}^n \to [0,\infty)$ defined as follows
\[
E_{r, z} (w) := \sum_{0 \leq i < j \leq n} |w^j \cdot \xi_{r,ji}(z) - w^i \cdot \xi_{r,ij}(z)|  \qquad \text{for $w \in \mathbb{R}^{(n+1) \times n}$,}
\]
where $w^{i}$ denotes the $i$-th column of the matrix~$w$.  Eventually, we denote by $\mathcal{S}_{n,z}$ the convex hull of $\mathcal{S}_{0,z}$. Observing that every~$z \in \mathrm{B}_1(0)$ with $z \cdot e_i < 0$ for every $i=1,\dotsc,n$ satisfies $\mathcal{S}_{n,z}\subset B_1(0)$ and that $
\mathcal{L}^n(\{ z \in \mathrm{B}_1(0) : \, z \cdot e_i < 0, \ i=1,\dotsc,n  \})=2^{-n}\omega_n$ we infer from elementary geometric considerations that there exists a dimensional constant $0<\rho(n) \leq 1$ such that $2^{n+1}\mathcal{L}^n(Q(n)) \geq \omega_n$ whenever
\begin{equation}
\label{e:rip1}
Q(n):= \{z \in \mathrm{B}_1(0) :\, \mathrm{B}_{\rho(n)}(0) \subset \mathring{\mathcal{S}}_{n,z} \subset \mathcal{S}_{n,z} \subset \mathrm{B}_1(0) \} \,.
\end{equation}
 We are now in position to state the required Rigid interpolation property of $F$;
 \begin{enumerate}[label=(RI),ref=RI]
   \item \label{hp:F2}  Given $x \in \mathbb{R}^n$ there exists a radius $r_x>0$ such that for every $z \in Q(n)$, $w \in \mathbb{R}^{(n+1)  \times n}$, and $0 < r \leq r_x$, we find a smooth map $a_r \colon \mathrm{B}_1(0) \to \mathbb{R}^n$ for which 
 \begin{align}
    \label{e:rip4}
    & \ \ \ \ \ \ \ \  a_r(h)=w^i  \qquad \text{ for every $h \in \mathcal{S}_{0,z}$,} \\
    \label{e:rip5}
    &\|\tilde{e}(a_r) - a_r \cdot F^q_{r , x} \|_{L^{\infty}(\mathcal{S}_{n,z}; \mathbb{M}^{n}_{sym})} \leq c(n) E_{r, z} (w)\,,
\end{align}
where $c(n)>0$ is a dimensional constant and where $\tilde{e}(a_r)$ denotes the symmetric gradient of $a_r$. 
\end{enumerate}

\begin{remark}
In the manifolds-setting considered in the next section, we will see that the operator $\mathcal{E}:= \tilde{e}(\cdot) - (\cdot) \cdot F^q \colon C^{\infty}(\Omega;\mathbb{R}^n) \to C^\infty(\Omega;\mathbb{M}^n_{sym})$ coincides with the curvilinear symmetric gradient. We further point out that $\mathcal{E}$ satisfies~\eqref{e:operator1}.
\end{remark}

\subsection{Definition of the space}

We start with a preliminary proposition.

\begin{proposition}
Let $\Om$ be an open bounded subset of~$\R^{n}$, $\xi \in \mathbb{S}^{n-1}$, let $P_\xi\colon \Om \to \xi^{\bot}$ be a curvilinear projection on~$\Omega$, and let $u \colon \Om \to \R^n$ be a measurable function. Then, for every $B \in \mathcal{B}( \Omega)$ the function
\[
y \mapsto |{\rm D} \hat{u}^{\xi}_{y} | (B^{\xi}_{y} \setminus J^{1}_{\hat{u}^{\xi}_{y}})  + \HH^{0} (B^{\xi}_{y} \cap J^{1}_{\hat{u}^{\xi}_{y} }) 
\]
is $\mathcal{H}^{n-1}$-measurable on $\xi^\bot$.
\end{proposition}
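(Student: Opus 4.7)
The plan is to view, for each $y$, the quantity in question as the total mass on $B$ of a positive Borel measure $\nu_y$ on $\Omega$, namely the push-forward through the continuous map $t\mapsto\varphi_\xi(y+t\xi)$ of $|D\hat u^\xi_y|\restr(\Omega^\xi_y\setminus J^1_{\hat u^\xi_y}) + \mathcal{H}^0\restr J^1_{\hat u^\xi_y}$. Once this identification is made, Borel measurability of $y\mapsto\nu_y(B)$ is verified first for open $B$ through an explicit supremum-over-partitions formula, and then extended to all Borel $B$ by a standard outer-regularity / monotone class argument.

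First I would fix a Borel representative of $u$ so that $\hat{u}^\xi_y(t)= u(\varphi_\xi(y+t\xi))\cdot\dot{\varphi}_\xi(y+t\xi)$ is pointwise defined and jointly Borel in $(y,t)$ on the Borel set $\{(y,t):\varphi_\xi(y+t\xi)\in\Omega\}$. For open $B\subseteq\Omega$, $B^\xi_y$ is open in $\mathbb{R}$, and I would prove
\begin{equation*}
\nu_y(B)=\sup\left\{\sum_{k=1}^N\big(|\hat{u}^\xi_y(t_k)-\hat{u}^\xi_y(t_{k-1})|\wedge 1\big):N\in\mathbb{N},\ t_0<\cdots<t_N\text{ in }B^\xi_y\cap\mathbb{Q}\right\}.
\end{equation*}
The key observation behind this identity is that truncating each increment at $1$ correctly separates the two contributions: near a point of $J^1_{\hat u^\xi_y}$, rational partition points squeezing the jump produce an increment of modulus arbitrarily close to $1$, so the big jumps are counted with weight one in the limit; while on the complement a sufficiently fine rational partition makes the truncation inactive and recovers $|D\hat u^\xi_y|$ by the classical one-dimensional sup formula for the total variation. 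Granted this identity, the inclusion $t_i\in B^\xi_y$ is the Borel condition $\varphi_\xi(y+t_i\xi)\in B$, so $\nu_y(B)$ is a countable supremum of Borel functions of $y$, and hence Borel.

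To pass from open to Borel $B$, I would exhaust $\Omega$ by an increasing sequence $\Omega_m\nearrow\Omega$ of open sets with $\overline{\Omega_m}\Subset\Omega$. On each $\Omega_m$, after a further truncation of $u$ through $\arctan$ to guarantee finiteness of the involved measures, the family of Borel subsets $B$ for which $y\mapsto\nu_y(B\cap\Omega_m)$ is measurable is a Dynkin system containing the $\pi$-system of open sets: it is closed under countable disjoint unions by monotone convergence and under complements by finiteness. The Dynkin $\pi$-$\lambda$ theorem then yields measurability on every Borel subset of $\Omega_m$, and letting $m\to\infty$ together with monotone convergence extends the conclusion to arbitrary Borel $B\subseteq\Omega$.

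The step I expect to be the main obstacle is the rigorous verification of the sup-over-partitions formula for functions that are not globally $BV$ on the line but become $BV$ only after removing the big-jump set $J^1_{\hat u^\xi_y}$: each admissible partition has to be carefully split into segments that account for the big jumps (each contributing asymptotically $1$) and segments that asymptotically recover the total variation on their complement, so that the countable character of $J^1_{\hat u^\xi_y}$ is exploited correctly and no contribution is double-counted.
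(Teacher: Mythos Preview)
Your approach is genuinely different from the paper's. The paper simply sets $v(x):=(u_\xi\circ\varphi_\xi)(x)$ on $\varphi_\xi^{-1}(\Omega)$, observes via~\eqref{e:sliceide} that $v(y+t\xi)=\hat u^\xi_y(t)$, and invokes the Euclidean result \cite[Lemma~3.6]{dal}. So the paper reduces everything, through the diffeomorphism $\varphi_\xi$, to the known straight-line case and quotes the literature; you are instead attempting a self-contained proof.

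Your self-contained route has a real gap, and it is precisely in the sup-over-partitions identity you flag as the main obstacle. Take $n=1$, $\hat u^\xi_y(t)=0$ for $t<\tfrac12$, $\hat u^\xi_y(t)=3$ for $t>\tfrac12$, and assign any value in $(0,3)$ at $t=\tfrac12$ (a rational point). Then $J^1_{\hat u^\xi_y}=\{\tfrac12\}$ and the target quantity on $B=(0,1)$ equals $|D\hat u^\xi_y|((0,1)\setminus\{\tfrac12\})+\HH^0(\{\tfrac12\})=0+1=1$. But the rational partition $t_0<\tfrac12<t_1$ with $t_0,t_1$ close to $\tfrac12$ gives two truncated increments, each equal to $1$, so the sup is at least $2$. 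Thus your proposed identity fails; the supremum can strictly exceed $\nu_y(B)$. This is not a measure-zero pathology in $y$: if $u$ jumps across a hyperplane through the origin and the curvilinear projection is the linear one, then \emph{every} slice has its big jump at the rational point $t=0$.

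The fix is to replace pointwise evaluation at rationals by one-sided approximate values, e.g.\ the right limits $\hat u^\xi_y(t^+):=\lim_{r\searrow 0}\frac1r\int_t^{t+r}\arctan\hat u^\xi_y(s)\,\di s$. These are still Borel in $(y,t)$ (hence the sup over rational partitions remains Borel in $y$), and with a right-continuous representative a big jump cannot be split into two truncated pieces: any partition interval straddling the jump contributes exactly $|[\hat u^\xi_y]|\wedge 1$. With this modification your outline for open $B$ becomes sound, and the Dynkin extension to Borel $B$ goes through. This is essentially the care taken in Dal~Maso's proof of the cited lemma; the paper's shortcut is to inherit it via the change of variables $\varphi_\xi$ rather than redo it.
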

\begin{proof}
 Letting $v(x):= (u_\xi \circ \varphi_\xi)(x)$ for every $x \in \varphi^{-1}(\Omega)$ and using identity \eqref{e:sliceide}, namely, $v(y+t\xi)=\hat{u}^\xi_y$ for $y \in \xi^\bot$ and $t \in \Omega^\xi_y$ (notice that $\Omega^\xi_y= \{t \in \mathbb{R} : \ y+t\xi \in \varphi^{-1}(\Omega)\}$, the claimed measurability follows from~\cite[Lemma 3.6]{dal} 
\end{proof}

We are now in a position to define~$GBD_{F}(\Om)$.

\begin{definition}[The space $GBD_{F}(\Om)$]
\label{d:GBD}
Let $\Om$ be an open subset of~$\R^{n}$. We say that a measurable function~$u \colon \Om \to \R^{n}$ belongs to $ GBD_{F}(\Om)$ if there exists $\lambda \in \mathcal{M}_{b}^{+}(\Om)$ such that for every open subset $U$ of~$\Om$, every $\xi \in \mathbb{S}^{n-1}$, and every curvilinear projection~$P_{\xi}\colon U \to \xi^{\bot}$ on~$U$, the following facts hold true: 
\begin{enumerate}
\item\label{e:slice-1} $\hat{u}^{\xi}_{y} \in BV_{loc} (U^{\xi}_{y})$  for $\HH^{n-1}$-a.e.~$y \in \xi^{\bot}$;
\vspace{1mm}
\item\label{e:slice-2} for every Borel subset $B \in \mathcal{B}(U)$
\begin{align*}
\int_{\xi^{\bot}} \Big( |{\rm D} \hat{u}^{\xi}_{y} | (B^{\xi}_{y} \setminus J^{1}_{\hat{u}^{\xi}_{y}}) & + \HH^{0} (B^{\xi}_{y} \cap J^{1}_{\hat{u}^{\xi}_{y} } ) \Big)\, \di \HH^{n-1}(y) 
\leq \|\dot{\varphi}_\xi\|^2_{L^\infty}{\rm Lip}(P_{\xi};U)^{n-1} \lambda(B)\,. 
\end{align*}
\end{enumerate}
\end{definition}

\begin{remark}
\label{r:nontrivial}
We notice that, thanks to the construction in Section~\ref{sub:curvpro} (see Theorem~\ref{p:curvpro}), for every open set $\Om \subseteq \R^{n}$ there exists an at most countable family 
\[
\big\{ \big(x_{i}, r_{i}, (P_{\xi, x_{i}})_{\xi \in \mathbb{S}^{n-1}}\big): \, i \in I \big\}
\] 
such that $\{ {\rm B}_{r_{i}} (x_{i})\}_{i \in I}$ is a cover of~$\Om$ and $(P_{x_{i}, \xi})_{\xi \in \mathbb{S}^{n-1}}$ is a family of curvilinear projections in~${\rm B}_{r_{i}} (x_{i})$. This justifies the requests~$(1)$ and~$(2)$ in Definition~\ref{d:GBD} and that the set~$GBD_{F}(\Om)$ is nontrivial.
\end{remark}


%


\subsection{A weak Poincar\'e's inequality}
\label{sub:poincare}
In this subsection we show that a Poincar\'e inequality holds true in $GBD_F$ as a consequence of \eqref{hp:F2}.
\begin{theorem}
\label{t:poincare}
 Let $F \in C^{\infty} ( \mathbb{R}^n \times \mathbb{R}^n ; \mathbb{R}^n)$ satisfy~\eqref{hp:F}--\eqref{hp:F2} and $u \in GBD_F(\Omega)$ with $\lambda \in \mathcal{M}^+_b(\Omega)$ given by Definition \ref{d:GBD}. There exists a dimensional constant $c(n)>0$ such that if $\Theta^{*n-1}(\lambda,x)=0$ for some $x \in \Omega$ then we find a radius $r_x>0$ and for every $0<r\leq r_x$ a smooth map $a_r \colon \mathrm{B}_1(0) \to \mathbb{R}^n$ satisfying 
\begin{align}
\label{e:poincare1000}
&\|e(a_r) - a_r \cdot F^q_{r, x} \|_{L^{\infty}({\rm B}_{\rho(n)/2}(0); \mathbb{M}^{n}_{sym})} \leq c(n) r^{1-n}\lambda_r ({\rm B}_1(0))  \\
\label{e:poincare2000}
    & \ \ \ \  \int_{{\rm B}_{\rho(n)/2}(0)} |u_{r, x}-a_r| \wedge 1 \, \di z \leq c(n) r^{1-n} \lambda_r ({\rm B}_1(0))\, ,
\end{align} 
where $\rho(n)>0$ is the dimensional constant defined by condition \eqref{e:rip1}, $\lambda_r := \psi_{r, x \sharp}\lambda$, and $\psi_{r, x} \colon \mathrm{B}_r(x) \to \mathrm{B}_1(0)$ is defined as $\psi_{r, x}(z):= (z-x)/r$. 
\end{theorem}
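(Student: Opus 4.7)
The plan is to follow the Chambolle--Conti--Francfort paradigm for Poincar\'e inequalities in $BD$-type spaces, substituting the curvilinear Rigid Interpolation condition \eqref{hp:F2} for the usual Euclidean interpolation lemma.

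\textbf{Step 1 (Rescaling and slicing).} I would first pass to rescaled coordinates, working with $u_{r,x}(z)=u(x+rz)$ on $\mathrm{B}_1(0)$ and the rescaled field $F_{r,x}$. Under the rescaling $\psi_{r,x}$, every family of curvilinear projections on $\mathrm{B}_r(x)$ associated with $F$ corresponds to a family of curvilinear projections on $\mathrm{B}_1(0)$ associated with $F_{r,x}$, and the $GBD_F$ condition (Definition \ref{d:GBD}) transfers to $u_{r,x}$ with the measure $\lambda_r$, up to a jacobian factor $r^{1-n}$ coming from the change of variables in the integration over $\xi^{\bot}$.

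\textbf{Step 2 (Averaging to select a good simplex).} For every ordered pair $0\leq i<j\leq n$ the family $\{\ell_{z,r,ij}\}_{z\in Q(n)}$ can be organized, up to shrinking $r_x$ as guaranteed by Lemma \ref{l:exp} and Theorem \ref{p:curvpro}, as a sub-family of slices of a curvilinear projection $P^{ij}$ on $\mathrm{B}_1(0)$. Setting $s^{z}_{ij}(t):=u_{r,x}(\ell_{z,r,ij}(t))\cdot\dot\ell_{z,r,ij}(t)$, the transferred $GBD_F$ inequality from Step~1 yields
\begin{equation*}
\int_{Q(n)} \sum_{i<j} \Bigl[\, |D s^{z}_{ij}|\bigl((0,t_{ij}(z))\setminus J^{1}_{s^{z}_{ij}}\bigr) + \mathcal{H}^{0}\bigl(J^{1}_{s^{z}_{ij}}\cap(0,t_{ij}(z))\bigr) \Bigr]\, dz \leq c(n)\, r^{1-n}\lambda_{r}(\mathrm{B}_1(0)).
\end{equation*}
Combined with a standard Lebesgue-point argument for $u_{r,x}$ at the vertices $z+e_i$, Chebyshev's inequality and the dimensional lower bound $\mathcal{L}^n(Q(n))\geq \omega_n/2^{n+1}$ produce a single $z_0\in Q(n)$ at which all the vertex values $w^{i}:=u_{r,x}(z_0+e_i)$ are well defined approximate limits and the bracket above is bounded by $c(n)\, r^{1-n}\lambda_{r}(\mathrm{B}_1(0))$.

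\textbf{Step 3 (Applying Rigid Interpolation).} With $z_0$ and $w=(w^0,\dots,w^n)$ chosen as above, since each difference $w^{j}\cdot\xi_{r,ji}(z_0)-w^{i}\cdot\xi_{r,ij}(z_0)$ equals the total increment of the slice $s^{z_0}_{ij}$ between its endpoints, a truncation argument (needed whenever large jumps $J^{1}$ are present along an edge, and which is exactly where the density hypothesis $\Theta^{*n-1}(\lambda,x)=0$ enters: it guarantees $r^{1-n}\lambda_{r}(\mathrm{B}_1(0))<1$ for $r$ small) gives
\begin{equation*}
E_{r,z_0}(w)\leq c(n)\, r^{1-n}\lambda_{r}(\mathrm{B}_1(0)).
\end{equation*}
Invoking \eqref{hp:F2} with this $(z_0,w,r)$ produces a smooth $a_r\colon\mathrm{B}_1(0)\to\mathbb{R}^n$ with $a_r(z_0+e_i)=w^{i}$ and $\|\tilde e(a_r)-a_r\cdot F^q_{r,x}\|_{L^\infty(\mathcal{S}_{n,z_0})}\leq c(n) E_{r,z_0}(w)$. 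Since $\mathrm{B}_{\rho(n)/2}(0)\subset\mathcal{S}_{n,z_0}$ by the defining property of $Q(n)$, this directly yields \eqref{e:poincare1000}.

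\textbf{Step 4 ($L^1$ bound and main obstacle).} For $h\in\mathrm{B}_{\rho(n)/2}(0)$ I would select a vertex $i(h)$ reachable from $h$ by a geodesic arc of the rescaled ODE lying in $\mathcal{S}_{n,z_0}$ and split
\begin{equation*}
|u_{r,x}(h)-a_r(h)|\wedge 1 \leq \bigl(|u_{r,x}(h)-w^{i(h)}|\wedge 1\bigr) + |a_r(z_0+e_{i(h)})-a_r(h)|.
\end{equation*}
Integrating over $h$, the first term is estimated by re-running the averaging of Step~2, now for the foliation of $\mathcal{S}_{n,z_0}$ by geodesics emanating from a \emph{fixed} vertex $z_0+e_{i(h)}$; the $GBD_F$ slicing bound again delivers a term $\leq c(n)\,r^{1-n}\lambda_r(\mathrm{B}_1(0))$. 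The second term is the one I expect to be the principal obstacle: \eqref{hp:F2} only controls the curvilinear symmetric gradient $\tilde e(a_r)-a_r\cdot F^q_{r,x}$, not $\nabla a_r$ itself. Passing from symmetric-gradient control to a genuine $C^1$ estimate on $a_r$ on $\mathcal{S}_{n,z_0}$ requires a Korn-type argument tailored to the curvilinear operator, to be carried out by comparing $a_r$ with an element of its kernel and applying \eqref{hp:F2} to modified vertex configurations; once this is in place, the triangle inequality together with the bounds on $\|a_r\|_\infty$ (available since all $|w^i|$ are comparable up to $E_{r,z_0}(w)$) closes the estimate and proves \eqref{e:poincare2000}.
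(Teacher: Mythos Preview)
Your overall architecture---rescale to $\mathrm{B}_1(0)$, average over simplex positions, select a good $z_0$ by Chebyshev, invoke \eqref{hp:F2}, then estimate the $L^1$ deviation---matches the paper's, and you correctly pinpoint the role of $\Theta^{*n-1}(\lambda,x)=0$ in upgrading the truncated seminorm $E^1_{r,z_0}$ to $E_{r,z_0}$. Step~3 and the derivation of \eqref{e:poincare1000} are essentially right.

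The gap is in Step~4. Your splitting
\[
|u_{r,x}(h)-a_r(h)|\wedge 1 \leq \bigl(|u_{r,x}(h)-w^{i(h)}|\wedge 1\bigr) + |a_r(z_0+e_{i(h)})-a_r(h)|
\]
compares \emph{full vectors}, and neither term is controlled by the available data. The $GBD_F$ slicing along a single geodesic from $z_0+e_{i(h)}$ to $h$ only bounds the \emph{scalar} increment $u_{r,x}(h)\cdot\dot\gamma(t_h)-w^{i(h)}\cdot\dot\gamma(0)$, not $|u_{r,x}(h)-w^{i(h)}|$; likewise, \eqref{hp:F2} controls only $\tilde e(a_r)-a_r\cdot F^q_{r,x}$, and passing from this to $|a_r(z_0+e_{i(h)})-a_r(h)|$ would indeed require a curvilinear Korn inequality that is neither proved in the paper nor part of the hypotheses.

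The paper avoids both issues by never comparing full vectors. For \emph{each} vertex $z_r+e_i$, $i=1,\dots,n$, one sets $X_i(h):=(u_{r,x}(h)-a_r(h))\cdot\chi_{r,z_r+e_i}(h)$, where $\chi_{r,z_r+e_i}(h)$ is the arrival velocity at $h$ of the geodesic from $z_r+e_i$. Along that geodesic the identity behind \eqref{e:operator1} gives $\frac{d}{dt}[a_r(\gamma)\cdot\dot\gamma]=[\tilde e(a_r)-a_r\cdot F^q_{r,x}]\dot\gamma\cdot\dot\gamma$, so the $a_r$-contribution to $X_i$ is bounded by $\|\tilde e(a_r)-a_r\cdot F^q_{r,x}\|_{L^\infty}$ with no Korn step, and the $u_{r,x}$-contribution by the one-dimensional variation along that slice. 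Integrating $|X_i(h)|\wedge 1$ over $h$ requires a separate \emph{radial} slicing estimate (the bound $\int_{\mathrm{B}_1(0)} O_{r,z}(u)\,dz\leq c(n)\lambda_r(\mathrm{B}_1(0))$ obtained via dyadic shells), which you do not isolate, and the good simplex $z_r$ must be chosen via Chebyshev good \emph{simultaneously} for the edge increments and for the radial oscillations from all $n+1$ vertices. Finally, since the $n$ directions $\chi_{r,z_r+e_i}(h)$ converge to $(h-z_r-e_i)/|h-z_r-e_i|$ as $r\searrow 0$ and hence are uniformly transversal on $\mathrm{B}_{\rho(n)/2}(0)$, one recovers $|u_{r,x}(h)-a_r(h)|$ pointwise from $|X(h)|$.
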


\begin{proof}
We notice that $u_{r, x} \in GBD_{F_{r, x}}({\rm B}_1(0))$ and that $\lambda_r$ can be chosen in the definition of $u_{r, x} \in GBD_{F_{r, x}}({\rm B}_1(0))$ (see (2) of Definition~\ref{d:GBD}). 

It is convenient to fix some notation. For $z \in Q(n)$ (cf.~ \eqref{e:rip1}) and $v \colon {\rm B}_1(0) \to \mathbb{R}^n$ we define $w_{v(z)} \in \mathbb{R}^{(n+1) \times n}$ as
\[
w_{v(z)}:= (v(z+e_0), v(z+e_1), \dotsc, v(z+e_n))
\]
and for every $i=0,\dotsc,n$ we set
\[
w^i_{v(z)}:= v(z+e_i).
\]
For $z \in {\rm B}_{1}(0)$, $r>0$, and $\xi \in \R^{n}$, similarly to Definition~\ref{d:exp} we set $\exp_{r, z}(\xi) := \gamma(1)$, where $\gamma$ is the unique solution of
\begin{displaymath}
\left\{\begin{array}{lll}
\ddot{\gamma} (t) = F_{r, x} (\gamma(t), \dot{\gamma} (t)) \,,\\[1mm]
\gamma(0) = z\,,\\[1mm]
\dot{\gamma} (0) = \xi\,.
\end{array}\right.
\end{displaymath}
As $F_{r, x} \to 0$ in $C^{\infty}_{loc} (\R^{n} \times \R^{n}; \R^{n})$ as $r \searrow 0$, there exists $r_{x}>0$ such that~$\exp_{r, z} (\xi)$ is well-defined for every $\xi \in {\rm B}_{4} (0)$ and every $z \in {\rm B}_{1}(0)$. Arguing as in Lemma~\ref{l:exp}, we may as well assume that $\exp_{r, z}$ is a diffeomorphism of~${\rm B}_{4}(0)$ onto its image, and that $\exp_{r, z}({\rm B}_{4}(0)) \supseteq {\rm B}_{2}(z)$ for $0 < r < r_{x}$ and $z \in {\rm B}_{1}(0)$. Hence, we may as well define for $w \in {\rm B}_{2}(z)$
\begin{align*}
\phi_{r, z} (w) := \frac{ \exp^{-1}_{r, z} (w)}{ |\exp^{-1}_{r, z} (w)|} \,, \qquad 
\chi_{r, z} (w)  :=  \dot{\exp}_{r, z} (t \phi_{r, z} (w) ) |_{t = |\exp^{-1}_{r, z} (w)|} \,.
\end{align*}
Finally, we set
\begin{align}
\label{e:not1}
\hat{u}^\xi_{r, z }(t)&:=u_{r, x}(\exp_{r, z}(t\xi)) \cdot \dot{\exp}_{r, z}(t\xi) \qquad \text{for $t >0$ and $\xi \in \mathbb{S}^{n-1}$,} \\
B^\xi_{r, z} &:= \{t >0 : \, \exp_{r, z}(t\xi) \in B \}  \qquad \text{for $B \in \mathcal{B}( {\rm B}_1(0) )$.} \label{e:not2}
\end{align}
We notice that, up to fix a smaller $r_{x}>0$, $\hat{u}^{\xi}_{r, z}(t)$ is well posed for $t \in {\rm B}_{1}(0)^{\xi}_{r, z}$. 

We subdivide the proof of the theorem in 3 main steps.

\noindent\underline{{\em Step 1}.} We now prove that, up to redefine $r_{x}>0$, there exists a constant $c(n) >0$ such that for every $0 < r \leq r_{x}$
\begin{equation}
\label{e:poincare3000}
    \int_{Q(n)} E^1_{r, z}(w_{u_{r, x}(z)}) \, \di z \leq c(n) \lambda_r({\rm B}_1(0))\,,
\end{equation}
where $E^1_{r,z}$ denotes the truncated version of $E_{r,z}$, namely,
\[
E^1_{r, z} (w) := \sum_{0 \leq i < j \leq n} |w^j \cdot \xi_{r,ji}(z) - w^i \cdot \xi_{r,ij}(z)| \wedge 1  \qquad \text{for $w \in \mathbb{R}^{(n+1) \times n}$}.
\]

 Fix $i,j \in \{0,\dotsc,n  \}$ with $i < j$ and define $\xi := (e_j -e_i)/|e_j-e_i|$. Given $s \in (-1,1)$ we consider the $(n-1)$-dimensional affine space $\xi^\bot_s:= \xi^\bot + s\xi$.  Consider the vector field $v_{s, r} \colon \xi^\bot \to \mathbb{S}^{n-1}$ given by $v_{s,r}(y):= \phi_{r, y+s \xi}(y+s\xi+(e_j-e_i))$. If we consider the flow $G_r \colon \mathbb{R}^n \times \mathbb{R} \times \mathbb{R}^n \to \mathbb{R}^n \times \mathbb{R}^n$ relative to the system 
\begin{equation}
\label{e:poincare14000}
\begin{cases}
\ddot{\gamma}(t) = F_{r, x}(\gamma(t),\dot{\gamma}(t))&\\
\gamma(0)= x, \ x \in \mathbb{R}^n &\\
\dot{\gamma}(0)= v, \ v \in \mathbb{R}^n,
\end{cases}
\end{equation}
then because of the convergence $F_{r, x} \to 0$ in $C^\infty_{loc}(\mathbb{R}^n \times \R^{n} ; \R^{n})$ as $r \searrow 0$, we see that 
\begin{equation}
\label{e:Gr}
G_r( w,t,v) \to  (w+tv,v) \qquad \text{in } C^\infty_{loc}(\mathbb{R}^n \times \mathbb{R} \times \mathbb{R}^n ; \R^{n} \times \R^{n} ) \text{ as $r \searrow 0$}.
\end{equation}
As a consequence, if we define $\varphi_{\xi, r} \colon \mathbb{R}^n  \to \mathbb{R}^n$ in such a way that $\varphi_{ \xi, r}(y+(t+s)\xi) = \pi_1(G_r(y+s\xi,t,v_{ s, r}(y)))$ for $y \in \xi^\bot \cap {\rm B}_1(0)$ and $t \in (-\tau,\tau)$ (for a suitable $\tau>0$ which can be chosen independently of~$r$ and where $\pi_1 \colon \mathbb{R}^n \times \mathbb{R}^n \to \mathbb{R}^n$ denotes the orthogonal projection on the first coordinate), then for every but sufficiently small $r>0$ we find a Lipschitz map $P_{\xi,s,r} \colon {\rm B}_1(0) \to \xi^\bot$ whose Lipschitz constants converge to $1$ as $r \searrow 0$ and whose level sets are exactly the curves $t \mapsto \varphi_{\xi , r }(y+(t+s)\xi)$ for every $y \in \xi^\bot \cap {\rm B}_1(0)$. Arguing as in Section~\ref{sub:curvpro} and using~\eqref{e:Gr}, we get that for every but sufficiently small $r>0$ and for every $s \in (-1,1)$ and $\xi \in \mathbb{S}^{n-1}$ the map~$P_{\xi,s,r}$ is a curvilinear projection with respect to 
 $F_{r, x}$ parametrized by~$\varphi_{\xi, r}$. In addition, writing~$z \in Q(n)$ as $z = y  + s\xi - e_{i}$ for some $y \in \xi^{\bot} \cap {\rm B}_{1}(0)$ we deduce from the uniqueness of the solution of~\eqref{e:poincare15000} and~\eqref{e:poincare14000} that  $\varphi_{\xi, r}(y+(t+s)\xi)= \ell_{z, r,ij}(t)$ for every $t \in [0,t_{ij}]$. In particular, we have
\begin{align*}
\varphi_{\xi, r }(y+s\xi)=z+e_i  \qquad &\text{ and } \qquad  \varphi_{\xi,r}(y+(t_{ij}+s)\xi)= z+e_j \\
\xi_{r,ij}(z) = \dot{\varphi}_{\xi, r}(y+s\xi) \qquad &\text{ and } \qquad  \xi_{r,ji}(z) = \dot{\varphi}_{\xi,r}(y+(t_{ij}+s)\xi).
\end{align*}
 
Therefore, from the definition of $GBD_{F_{r, x}}({\rm B}_1(0))$ we can write by Fubini's Theorem
\[
\begin{split}
&\int_{Q(n)}|w_{u_{r, x}(z)}^j \cdot \xi_{r,ji}(z) - w_{u_{r, x}(z)}^i \cdot \xi_{r,ij}(z)| \wedge 1 \, \di z\\
&= \int_{-1}^1 \bigg( \int_{\xi^\bot+s\xi-e_i} |w_{u_{r, x}(h)}^j \cdot \xi_{r,ji} ( h )  - w_{u_{r, x} (h)}^i \cdot \xi_{r,ij} ( h) | \wedge 1 \, \di \mathcal{H}^{n-1}(h) \bigg) \di s \\
&  \ \ \ \ \ \ \ \ \ \ \leq 2 \|\dot{\varphi}_{\xi, r }\|^2_{L^\infty} \text{Lip}(P_{\xi, s, r} ; {\rm B}_1(0))^{n-1} \lambda_r({\rm B}_1(0)) \,.
\end{split}
\]
The arbitrariness of the indices $i$ and $j$ leads to the existence of a radius $r_x >0$ and a constant $c(n) >0$ for which \eqref{e:poincare3000} holds true.

\noindent \underline{\em Step 2.} Let us compactly write 
\[
\begin{split}
|D\hat{u}^\xi_{r, z}|:=&|D\hat{u}^\xi_{r, z}|({\rm B}_1(0)^\xi_{r, z} \setminus J^1_{\hat{u}^\xi_{r, z}} ) + \mathcal{H}^0({\rm B}_1(0)^\xi_{r, z} \cap J^1_{\hat{u}^\xi_{r, z}} ) \\
&O_{r, z}(u):= \int_{\mathbb{S}^{n-1}} |D\hat{u}^\xi_{r, z}| \, \di \mathcal{H}^{n-1}(\xi) \,.
\end{split}
\]
We claim that, up to a redefinition of~$r_{x}>0$ and of~$c(n)>0$, it holds 
\begin{align}
    \label{e:poincare4000}
    \int_{{\rm B}_1(0)} O_{r, z}(u) \, \di z &\leq c(n) \lambda_r({\rm B}_1(0)) \qquad \text{for $z \in {\rm B}_{1}(0)$}\,.
\end{align}
%
%
%

For $z \in {\rm B}_{1}(0)$, $i = 1, \ldots, n$, and $j \in \mathbb{N}$ we define the open sets 
\[
U^\pm_{z,i,j} := \bigg\{w \in {\rm B}_{2^{-j+1}}(z) \setminus \overline{{\rm B}}_{2^{-j}}(z) : \, (w-z) \cdot e_i > \pm \frac{|w-z|}{2\sqrt{n}}  \bigg\}\,. 
\]

For every $j \in \mathbb{N}$ and $z \in \rm {\rm B}_1(0)$ the map $P_{r, i, j} \colon U^\pm_{z,i,j} \to e_i^\bot$ defined as $P_{r, i, j}(w) := (\pi_{e_i} \circ \phi_{r, z})(w) $ is a curvilinear projection on both open sets $U^+_{z,i,j}$ and $U^-_{z,i,j}$ with respect to~$F_{r, x}$ for every but sufficiently small $r >0$ (uniformly in~$j$ and $z$). In order to verify this, we have to find a parametrization of  $\varphi_{r, i, j} \colon \{y + te_i :   (y,t) \in [e_i^\bot \cap {\rm B}_{\rho}(0)] \times (-\tau,\tau) \} \to \mathbb{R}^n$ of~$P_{r, i, j}$ (for suitable $\tau,\rho>0$) in the sense of Definition~\ref{d:param-maps}, for every $0 < r < r_{x}$.

 We focus on $U^+_{z,i,j}$. We set $\rho := \sqrt{1-(4\sqrt{n})^{-2}}$, $\tau := 2^{-j}$,  and
 $$\xi_y := (\pi_{e_i} \restr \{\eta \in \mathbb{S}^{n-1} : \, \eta \cdot e_i >(4\sqrt{n})^{-1} \})^{-1}(y)
 \qquad \text{for $y \in e_i^\bot \cap {\rm B}_{\rho}(0)$}\,.
 $$
   For $(y,t) \in [e_i^\bot \cap {\rm B}_{\rho}(0)] \times (-\tau,\tau)$ we let 
\[
\varphi_{r, i, j} (y + te_i)= \exp_{r, z} \big((t + \tau +2^{-j-2}) \xi_y \big)\,.
\]
 To verify (1) of Definition \ref{d:param-maps} we notice that $U^{+}_{z,i,j} \subset \text{Im}(\varphi_{e_i})$ is equivalent to
\begin{equation}
\label{e:radialslice1}
\begin{split}
& \{w \in {\rm B}_1(0) \setminus \overline{{\rm B}}_{2^{-1}}(0) : \, w \cdot e_i > \pm|w|/2\sqrt{n}  \} 
\\
&
\subset \left\{\frac{\exp_{r, z}(2^{-j}(t +1 + 2^{-2})\xi_y) - z}{2^{-j}} : \, (y,t) \in  [e_i^\bot \cap {\rm B}_{\rho}(0)] \times (-1,1) \right\}\,.
\end{split}
\end{equation}
Since $(\exp_{r, z} ( 2^{-j} \, \cdot ) - z) / 2^{-j} \to \text{Id}$  in $C^\infty_{loc} (\R^{n}; \R^{n})$ as $r \searrow 0$ uniformly w.r.t.~$j \in \mathbb{N}$, by our choice of~$\rho$ and~$\tau$ we deduce that, up to redefine $r_{x}>0$, inclusion \eqref{e:radialslice1} holds true for every $0 < r < r_{x}$, every $z \in {\rm B}_{1}(0)$, and every $j \in \mathbb{N}$. In a similar way we get that $\varphi_{r, i, j}^{-1} \restr U^+_{z,i,j}$ is a bi-Lipschitz diffeomorphism for every $i$, $j$, and $0 < r < r_{x}$. This gives property~(2) of Definition~\ref{d:param-maps}. Property~(3) follows by construction. To conclude, we have to show that~$\varphi_{r, i, j}$ satisfies $\ddot{\varphi}_{r, i, j} = F_{r, x}(\varphi_{r, i, j}, \dot\varphi_{r, i, j})$. Again this follows by construction, since the level sets~$\phi_{r, z}^{-1}(\xi)$ are described exactly by the curve $t \mapsto \exp_{r, z}(t\xi)$. 

Let us fix~$j \in \mathbb{N}$ and $i = 1 , \ldots, n$. Applying the definition of~$GBD_{F_{r,x}} ({\rm B}_{1}(0))$ (see Definition~\ref{d:GBD}) with curvilinear projection $P_{r, i, j} \colon U^+_{z,i,j} \to e_i^\bot$ we get for every $B \in \mathcal{B}(U_{z, i, j}^{+})$
\begin{align}
\label{e:1000}
   \int_{e_i^\bot}\!\!\! \big(  | D (\hat{u}_{r, x})^{e_i}_y|(B^{e_i}_y \setminus J^1_{(\hat{u}_{r, x})^{e_i}_y}) & + \mathcal{H}^0(B^{e_i}_y \cap J^1_{(\hat{u}_{r, x})^{e_i}_y} \big) \, \di \mathcal{H}^{n-1}(y) 
   \\
   &
   \leq \| \dot{\varphi}_{r, i, j} \|_{L^\infty}^2 \text{Lip}(P_{r, i, j} ; U^+_{z,i,j})^{n-1} \lambda_{r}(B) \,. \nonumber
\end{align}
Since $F_{r, x} \to 0$ in $C^{\infty}_{loc} (\R^{d} \times \R^{d}; \R^{d})$, we have that, up to redefine $r_{x}>0$ and $c(n)>0$, it holds
\begin{align*}
 \text{Lip}(P_{r, i, j} ;U^+_{z,i,j})^{n-1} \leq c(n) 2^{(n-1)(j+1)} \qquad \text{and} \qquad  \|\dot{\varphi}_{r, i, j}\|_{L^\infty}^2 \leq 2\,.
\end{align*}
Hence, it follows from~\eqref{e:1000} that
\begin{align}
    \label{e:radialslice2}
    \int_{e_i^\bot} \big( |D(\hat{u}_{r, x})^{e_i}_y | (B^{e_i}_y \setminus J^1_{(\hat{u}_{r, x})^{e_i}_y}) & + \mathcal{H}^0(B^{e_i}_y \cap J^1_{(\hat{u}_{r, x})^{e_i}_y} \big)  \, \di \mathcal{H}^{n-1}(y) 
    \\
    & 
    \leq 2 c(n) 2^{(n-1)(j+1)} \lambda_{r}(B) \leq 2 \int_{B} \frac{c(n)}{|w-z|^{n-1}} \, \di \lambda_{r} (w)\,, \nonumber
\end{align}
for every  $B \in \mathcal{B}( U^+_{z,i,j})$ and every $j\in \mathbb{N}$. The same inequality for the set $U^-_{z,i,j}$ can be obtained.

 Let us set $S_{i}(z):= \{w \in  \mathrm{B}_1(0) : \, |(w-z) \cdot e_i| > |w-z|/(2\sqrt{n})  \}$. Choosing $B = U^+_{z,i,j}  \cap \rm B_1(0) $ and $B= U^-_{z,i,j}  \cap \rm B_1(0) $ in~\eqref{e:radialslice2} and summing both sides with respect to $j \in \mathbb{N}$, we obtain that
\begin{equation}
    \label{e:radialslice3}
    \begin{split}
    \int_{e_i^\bot} \big(  |D(\hat{u}_{r, x})^{e_i}_y|(S_{i}(z)^{e_i}_y \setminus J^1_{(\hat{u}_{r, x})^{e_i}_y}) &+ \mathcal{H}^0(S_{i}(z)^{e_i}_y \cap J^1_{(\hat{u}_{r, x})^{e_i}_y}  \big) \, \di \mathcal{H}^{n-1}(y)\\
    &\leq 2\int_{S_{i}(z)} \frac{c(n)}{|w - z|^{n-1}} \, \di \lambda_{r}(w)\,.
    \end{split}
\end{equation}
  Recalling the notation~\eqref{e:not1}--\eqref{e:not2}, 
we can perform the change of variable induced by the map $\pi_{e_i} \colon \{ \xi \in \mathbb{S}^{n-1} : |\xi \cdot e_i| > 1/(2\sqrt{n}) \} \to e_i^\bot$  on the integral on the left hand-side of~\eqref{e:radialslice3} and up to redefine $c(n)>0$ we get
\begin{align}
    \label{e:radialslice4}
    \int_{\mathbb{S}^{n-1}} \big( |D\hat{u}^{\xi}_{ r, z} |(S_{i}(z)^{\xi}_{r, z} \setminus J^1_{\hat{u}^{\xi}_{r, z}}) &+ \mathcal{H}^0(S_{i}(z)^{\xi}_{r, z} \cap J^1_{\hat{u}^{\xi}_{r, z}} \big) \, \di \mathcal{H}^{n-1}(\xi)\\
    &\leq 2\int_{S_{i}(z)} \frac{c(n)}{|w-z|^{n-1}} \, \di \lambda_{r}(w)\,. \nonumber
\end{align}
By possibly redefining again $c(n)>0$ and by summing both sides of~\eqref{e:radialslice4} with respect to $i=1,\dotsc,n$, we obtain
\begin{align}
    \label{e:radialslice}
    \int_{\mathbb{S}^{n-1} }  | D\hat{u}^{\xi}_{r, z} | ( \mathrm{B}_{1}(0)^{\xi}_{r, z} \setminus J^1_{\hat{u}^{\xi}_{r, z}}) &+ \mathcal{H}^0( \mathrm{B}_{1}(0)^{\xi}_{r, z}  \cap J^1_{\hat{u}^{\xi}_{r, z}}) \, \di \mathcal{H}^{n-1}(\xi)\\
    &\leq \int_{{\rm B}_{1}(0)} \frac{c(n)}{|w-z|^{n-1}} \, \di \lambda_{r}(w)\,. \nonumber
\end{align}
Then, inequality~\eqref{e:poincare4000} follows by integrating both side of~\eqref{e:radialslice} with respect to $z \in {\rm B}_{1}(0)$ and by using Fubini's Theorem.

\noindent{\underline{\em Step 3: proof of~\eqref{e:poincare1000} and~\eqref{e:poincare2000}.}} 
We claim that, up to redefine $c(n) >0$, for every $0 < r \leq r_x$ we find $z_r \in Q(n)$ satisfying for every $z \in S_{0,z_r}$
\begin{align}
    \label{e:poincare5000}
    &  E^1_{r, z_r}(w_{u_{r, x}(z_r)}) \leq c(n) \lambda_r({\rm B}_1(0)) \,, \\
    \label{e:poincare6000}
    &  O_{r, z}(u)\leq c(n) \lambda_r({\rm B}_1(0))\,, \\
    \label{e:poincare7000}
    &|u_{r, x}(h) \cdot \chi_{r, z}(h)- u_{r, x}(z) \cdot \phi_{r, z}(h)| \wedge 1 \leq c(n) |D\hat{u}^{\phi_{r, z}(h)}_{r, x}| \ \text{a.e. } h \in {\rm B}_{\rho(n)/2}(0)\,,
\end{align}
where in \eqref{e:poincare7000} we are also assuming that every $z \in S_{0,z_r}$ is a Lebesgue's point of~$u_{r, x}$. Indeed,~\eqref{e:poincare5000}--\eqref{e:poincare6000} can be obtained from \eqref{e:poincare3000}--\eqref{e:poincare4000} via Chebychev's inequality and appealing to the lower bound on the measure of $Q(n)$ in \eqref{e:rip1}. Notice also that in view of $\Theta^{*n-1}(\lambda,x)=0$, inequality \eqref{e:poincare5000} becomes for every but sufficiently small $r>0$
\begin{equation}
\label{e:poincare5000.1}
      E_{r, z_r}(w_{u_{r, x}(z_r)}) \leq c(n) \lambda_r({\rm B}_1(0)) \,.
\end{equation}

For what concerns~\eqref{e:poincare7000}, we notice that if $z \in {\rm B}_1(0)$ satisfies $O_{r, z}(u) < \infty$, then for $\mathcal{H}^{n-1}$-a.e. $\xi \in \mathbb{S}^{n-1}$ the function $t \mapsto \hat{u}^\xi_{r, z}(t)$ belongs to $BV_{loc} ({\rm B}_1(0)^\xi_{r, z})$. In addition, if~$z$ is also a Lebesgue point of $u_{r, x}$, we can apply the Fundamental Theorem of Calculus to write exactly
\[
\begin{split}
|u_{r, x}(h) \cdot \chi_{r, z}(h)- u_{r, x}(z) \cdot \phi_{r, z}(h)| \wedge 1 \leq |D\hat{u}^{\phi_{r, z}(h)}_{r, z}| \,,
\end{split}
\]
where we used the concavity of the truncation function when restricted to the positive real line. Eventually, for every $0 < r \leq r_x$ we associate to each $h \in {\rm B}_{\rho(n)/2}(0)$ the $n$-tuple $(\chi_{z_r+e_i,r}(h))_{i=1}^n$ and notice that, since $F_{r, x} \to F_{0, x} = 0$ in $C^\infty_{loc}(\mathbb{R}^n \times \mathbb{R}^n; \R^{n})$ as $r \searrow 0$, the following convergence holds true for every $i=1,\dotsc,n$
\begin{equation}
    \label{e:poincare8000}
    \lim_{r \searrow0} \bigg\| \chi_{r, z_r+e_i} - \frac{(\cdot) -(z_r+e_i)}{|(\cdot) -(z_r+e_i)|} \bigg\|_{L^\infty({\rm B}_{\rho(n)/2}(0))} = 0\,. 
\end{equation}
Condition \eqref{e:poincare8000} implies that, up to redefine $r_x>0$ and $c(n)>0$, we have
\begin{equation}
    \label{e:poincare9000}
    \inf_{h \in {\rm B}_{\rho(n)/2}(0)} |\chi_{r, z_r+e_1}(h) \wedge \dotsc \wedge \chi_{r, z_r+e_n } ( h ) | \geq \frac{1}{c(n)} \qquad \text{ for every } 0<r\leq r_x \,.
\end{equation}
By the Rigid interpolation property~\eqref{hp:F2}, up to redefine again $r_x>0$ and $c(n)>0$, we find a smooth map $a_r \colon {\rm B}_{\rho(n)/2}(0) \to \mathbb{R}^n$ satisfying for every $0 < r \leq r_x$
\begin{align}
    \label{e:poincare10000}
    &a_r(z)=u_{r, x}(z) \qquad \text{ for every $z \in S_{0,z_r}$,} \\
    \label{e:poincare11000}
    &\|e(a_r) - a_r \cdot F^q_{r, x} \|_{L^{\infty}(S_{n,z_r}; \mathbb{M}^{n}_{sym})} \leq c E_{z_r,r}(w_{u_{r, x}(z_r)}) \,. 
\end{align}
In particular, \eqref{e:poincare11000} in combination with \eqref{e:poincare5000} gives immediately \eqref{e:poincare1000}. 

It remains to prove \eqref{e:poincare2000}. To this purpose, we define the vector field $X \colon {\rm B}_{\rho(n)/2}(0) \to \mathbb{R}^n$ as
\[
X(h):=((u_{r, x}(h) - a_r(h)) \cdot \chi_{r, z_r+e_1}(h), \dotsc, (u_{r, x}(h)-a_r(h)) \cdot \chi_{r, z_r+e_n}(h)).
\]
By virtue of \eqref{e:poincare7000} and \eqref{e:poincare10000} we can write for every $i=1,\dotsc,n$ and for a.e.~$h \in {\rm B}_{\rho(n)/2}(0)$
\[
\begin{split}
    |X_i(h)| \wedge 1 &= |(u_{r, x } ( h ) - a_r( h ) ) \cdot \chi_{r, z_r+e_i} ( h ) - (u_{r, x} (z_r+e_i) - a_r ( z_r + e_i ) ) \cdot \phi_{r, z_r+e_i} ( h ) | \wedge 1 \\
    & \leq | D\hat{u}^{\phi_{r, z_r+e_i}(h)}_{r, z_r+e_i}| + \|e(a_r) - a_r \cdot F^q_{r, x} \|_{L^{\infty}(S_{n,z_r}; \mathbb{M}^{n}_{sym})} \sup_{\xi,t } |\dot{\exp}_{r, z_r+e_i} (t\xi)|\,, 
\end{split}
\]
where the supremum is considered for all $\xi \in \mathbb{S}^{n-1}$ and all positive~$t$ for which the map $\exp_{r, z_r+e_i}((\cdot)\xi)$ takes value in~${\rm B}_1(0)$. In view of the previous inequality and of~\eqref{e:poincare11000}, we can redefine $r_x$ and $c(n)>0$ in such a way that 
\begin{equation}
\label{e:poincare12000}
|X_i(h)| \wedge 1 \leq  |D\hat{u}^{\phi_{r, z_r+e_i}(h)}_{r, z_r+e_i} | + c(n) E_{r, z_r}(w_{u_{r, x}(z_r)}) \,, 
\end{equation}
for a.e.~$h \in {\rm B}_{\rho(n)/2}(0)$, for every $0< r \leq r_x$, and for every $i=1,\dotsc,n$.
Notice that Coarea formula together with estimate \eqref{e:retr2} implies
\[
\begin{split}
    \int_{{\rm B}_{\rho(n)/2}(0)} & |D\hat{u}^{\phi_{r, z_r+e_i}(h)}_{r, z_r+e_i} | \, \di h 
     \leq  c'(n) \int_{{\rm B}_{\rho(n)/2}(0)} |D\hat{u}^{\phi_{r, z_r+e_i}(h)}_{r, z_r+e_i}| J\phi_{r, z_r+e_i}(h) \, \di h \\
    &=c''(n)\int_{\mathbb{S}^{n-1}} |D\hat{u}^{\xi}_{r, z_r+e_i}|\big( \mathcal{H}^1(\phi^{-1}_{r, z_r+e_i} (\xi) \cap {\rm B}_{\rho(n)/2}(0))\big) \, \di \mathcal{H}^{n-1}(\xi) \\
    &\leq c'''(n) \int_{\mathbb{S}^{n-1}} |D\hat{u}^{\xi}_{r, z_r+e_i}| \, \di \mathcal{H}^{n-1}(\xi)
\end{split}
\]
for suitable constants $c'(n),c''(n),c'''(n)$ and for every but sufficiently small $r>0$. Therefore, by~\eqref{e:poincare5000}--\eqref{e:poincare6000} we can possibly redefine $r_x>0$ and $c(n)>0$ so that by integrating both sides of \eqref{e:poincare12000} we obtain
\begin{equation}
    \label{e:poincare13000}
    \int_{{\rm B}_{\rho(n)/2}(0)} |X_i(h)| \wedge 1 \, \di h \leq c(n) \lambda_r({\rm B}_1(0))  \qquad \text{ for every } 0 <r \leq r_x\,.
\end{equation}
Finally, we notice that condition \eqref{e:poincare9000} implies that, up to redefine $c(n)>0$, we have also 
\[
|u_{r, x}(h) - a_r(h)| \leq c(n) |X(h)| \qquad \text{ for every $h \in {\rm B}_{\rho(n)/2}(0)$ and $0 <r \leq r_x$},
\]
which together with \eqref{e:poincare13000} immediately gives the validity of \eqref{e:poincare2000}.
\end{proof}

We conclude this subsection with the following proposition.

\begin{proposition}
\label{p:lll}
Let $F \in C^{\infty} ( \mathbb{R}^n \times \mathbb{R}^n ; \mathbb{R}^n)$ satisfy~\eqref{hp:F}--\eqref{hp:F2}, $u \in GBD_F(\Omega)$, and $x \in \Omega$ such that $\Theta^{*n-1}(\lambda,x)=0$. Then
\begin{equation*}
    \lim_{r \searrow 0} \, \inf_{a \in \Xi (\rm B_{\rho(n)/2}(0))} \int_{\mathrm{B}_{\rho(n)/2}(0)} |u_{r, x} - a | \wedge 1 \, \di z =0\,.
\end{equation*}
\end{proposition}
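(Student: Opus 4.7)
The plan is to derive Proposition \ref{p:lll} as an immediate consequence of the Poincar\'e inequality in Theorem \ref{t:poincare}, using the density hypothesis only to conclude that the error bounds produced there vanish in the limit. First I would fix $x \in \Omega$ with $\Theta^{*n-1}(\lambda,x) = 0$ and apply Theorem \ref{t:poincare} to obtain, for every $0 < r \leq r_x$, a smooth map $a_r \colon \mathrm{B}_{\rho(n)/2}(0) \to \mathbb{R}^n$ satisfying \eqref{e:poincare1000} and \eqref{e:poincare2000}. Abbreviating $\delta_r := c(n)\, r^{1-n} \lambda_r(\mathrm{B}_1(0))$ and noting that $\lambda_r(\mathrm{B}_1(0)) = \lambda(\mathrm{B}_r(x))$ because $\lambda_r = \psi_{r,x \sharp}\lambda$, the hypothesis $\Theta^{*n-1}(\lambda,x) = 0$ is equivalent to $\delta_r \to 0$ as $r \searrow 0$.

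Second, I would check that $a_r$ is a legitimate competitor in $\Xi(\mathrm{B}_{\rho(n)/2}(0))$ for all sufficiently small $r$. In the rescaled coordinates the geodesic flow underlying $u_{r,x}$ is driven by $F_{r,x}$, so the associated curvilinear symmetric gradient is $\mathcal{E}_{r,x}(a) = \tilde{e}(a) - a \cdot F^q_{r,x}$ (cf.~the Remark following \eqref{hp:F2}). Estimate \eqref{e:poincare1000} then reads $\|\mathcal{E}_{r,x}(a_r)\|_{L^\infty(\mathrm{B}_{\rho(n)/2}(0))} \leq \delta_r$, which is below the threshold $1/c_\mathcal{E}(2)$ as soon as $r$ is small enough, so $a_r \in \Xi(\mathrm{B}_{\rho(n)/2}(0))$.

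Finally, using $a_r$ as a competitor in the infimum and invoking \eqref{e:poincare2000},
\[
\inf_{a \in \Xi(\mathrm{B}_{\rho(n)/2}(0))} \int_{\mathrm{B}_{\rho(n)/2}(0)} |u_{r,x} - a| \wedge 1 \, \di z \leq \int_{\mathrm{B}_{\rho(n)/2}(0)} |u_{r,x} - a_r| \wedge 1 \, \di z \leq \delta_r,
\]
and letting $r \searrow 0$ produces the claim. All the substantive analysis is packed into Theorem \ref{t:poincare}; the only subtlety here is recognizing that the class $\Xi(\mathrm{B}_{\rho(n)/2}(0))$ is naturally read relative to the rescaled field $F_{r,x}$ on the blow-up ball, after which the proof amounts to a direct limit passage.
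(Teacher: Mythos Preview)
Your argument is correct and matches the paper's intended route: the paper states Proposition~\ref{p:lll} immediately after Theorem~\ref{t:poincare} without proof, clearly meaning it as a direct corollary, and your derivation---use \eqref{e:poincare1000} to place $a_r$ in $\Xi(\mathrm{B}_{\rho(n)/2}(0))$ for small $r$, then invoke \eqref{e:poincare2000} and let $\delta_r = c(n)\,r^{1-n}\lambda(\mathrm{B}_r(x)) \to 0$---is exactly that corollary written out. Your observation that $\Xi(\mathrm{B}_{\rho(n)/2}(0))$ must be read relative to the rescaled operator $\mathcal{E}_{r,x}(a)=\tilde{e}(a)-a\cdot F^q_{r,x}$ (consistent with $u_{r,x}\in GBD_{F_{r,x}}(\mathrm{B}_1(0))$, as noted at the start of the proof of Theorem~\ref{t:poincare}) correctly resolves the one notational looseness in the paper.
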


As a consequence of Proposition \ref{p:lll}, the set $[{\rm Osc}]_{u} (\rho(n))$ is $\sigma$-finite w.r.t.~$\HH^{n-1}$.

\subsection{The case of Riemannian manifolds}
\label{sub:Riemann}

In this subsection we verify that, in the case $F$ is related to the geometry of a Riemannian manifold, then condition \eqref{hp:F2} is satisfied. For this purpose consider an $n$-dimensional manifold $\rm (M,g)$ which for simplicity we suppose embedded in $\mathbb{R}^m$ (actually this is not restrictive in view of the celebrated Nash's imbedding Theorem \cite{Nash}). Let $(U,\psi)$ denote a chart with $\psi \colon U \to \Omega \subset \mathbb{R}^n$. Given $q \in \Omega$ and $0 < \delta < \text{dist}(z, \partial \Omega )$ we denote by $\Psi$ a $C^\infty$-regular extension of $\psi \restr \psi^{-1}(\mathrm{B}_\delta(q))$ to the whole of $\mathbb{R}^m$ and such that $\mathrm{B}_\delta(q) \subset \Psi(\mathbb{R}^m) \subset \Omega$. Letting $\varphi \colon \Omega \to U$ denote the inverse of $\psi$, we define $g_i \colon \Omega \to \mathbb{R}^m$ as $g_i(h):= \partial_i \varphi(h)$ for every $i=1,\dotsc,n$. Since \begin{equation*}
\text{rank}\{g_1(h),\dotsc,g_n(h)\} = n \qquad  \text{for every $h\in \Omega$},
\end{equation*}
we can define the $i$-th element of the dual basis $g^i \colon U \to \mathbb{R}^{m*}$ in such a way that
\begin{equation}
\label{e:appendix2000}
\langle g^i(h), g_j(h)\rangle = \delta_{ij} \qquad  \text{for every }i,j=1,\dotsc,n \ \text{ and } \ h \in \Omega\,,
\end{equation}
where $\langle \cdot, \cdot \rangle$ denote the duality paring between $\mathbb{R}^{m*}$ and $\mathbb{R}^m$.

Now we want to write the covariant derivative of $a \in \mathscr{D}^1(\rm M)$, namely, a differential one-form on $\rm M$, locally in terms of the dual basis $\{g^1,\dotsc,g^n\}$. For this purpose we denote by $a_i \colon \Omega \to \mathbb{R}$ the \emph{curvilinear coordinates} of $a$, namely, $a(h)= \sum_i a_i(h) g^i(h)$ for every $h \in \Omega$. For $i=1,\dotsc,n$, the $i$-th \emph{covariant derivatives} $\nabla_i a(h) $ of~$a$ at $h \in \Omega$ is the element of $\mathbb{R}^{m*}$ defined as
\begin{align}
\label{e:A6}
    &\langle \nabla_i a(h), g_j(h) \rangle := \partial_i a_j(h) -\sum_k a_k(h) \Gamma^k_{ij}(h) \qquad  j=1,\dotsc,n \,, \\
    \label{e:A6.1}
    & \ \ \ \ \ \ \ \ \ \langle \nabla_i a(h), v \rangle :=0 \qquad \text{if } v \notin \text{span} \{g_1(h),\dotsc,g_n(h)\},
\end{align}
where $\Gamma^k_{ij}$ denotes the Christoffel's symbols with respect to the basis $\{g_1,\dotsc,g_n\}$; we recall that they can be computed in coordinates as
\begin{equation*}
\Gamma^k_{ij}(h):=-\langle \partial_i g^k(h), g_j(h) \rangle \qquad \text{for } i,j,k=1,\dotsc,n.
\end{equation*}
The gradient of $a$ can be locally represented as an operator $\nabla \colon C^{\infty}(\Omega;\mathbb{R}^n) \to C^\infty(\Omega;\mathbb{M}^n)$ called \emph{curvilinear gradient} and defined as
\[
\begin{split}
[\nabla(a)]_{ij}(h) &:= \langle \nabla_i a(h), g_j(h) \rangle \\  &:=\partial_i a_j(h)  - \sum_{\ell=1}^n a_\ell(h) \Gamma^\ell_{ij}(h), \qquad i,j=1,\dotsc,n. 
\end{split}
\]
 Analogously, the symmetric gradient of $a$ can be locally represented as an operator $e \colon C^{\infty}(\Omega;\mathbb{R}^n) \to C^\infty(\Omega;\mathbb{M}^n_{sym})$ called \emph{curvilinear symmetric gradient} and defined as
\[
\begin{split}
2[e(a)]_{ij}(h) &:= \langle \nabla_i a(h), g_j(h) \rangle + \langle \nabla_j a(h), g_i(h) \rangle\\  &:=\partial_i a_j(h) + \partial_j a_i(h) - 2\sum_{\ell=1}^n a_\ell(h) \Gamma^\ell_{ij}(h), \qquad i,j=1,\dotsc,n. 
\end{split}
\]
 We continue by fixing~$(\tilde{e}_i)_{i=1}^n$ and~$(e_i)_{i=1}^m$ two orthonormal bases of~$\mathbb{R}^n$ and of~$\mathbb{R}^m$, respectively. We further denote by $(\tilde{e}^i)_{i=1}^n$ and $(e^i)_{i=1}^m$ their dual bases. For convenience of notation we also set~$\tilde{e}_0:=0$. 
 By definition of~$\Psi$ and~$\varphi$, for $h \in \mathrm{B}_\delta(q)$ we have the identity 
\[
\partial_i (\Psi \circ \varphi)(h) \cdot \tilde{e}_j = \delta_{ij} \qquad  \text{for }i,j=1,\dotsc,n\,.
\]
Thus, for every $i,j=1,\dotsc,n$ and every $h \in \mathrm{B}_\delta(q)$ we get that 
\begin{equation}
    \label{e:appendix1000}
    \delta_{ij}=\sum_{\ell=1}^m (\partial_\ell \Psi(\varphi(h)) \cdot \tilde{e}_j) (\partial_i \varphi(h) \cdot e_\ell) =\sum_{\ell=1}^m ( \partial_\ell \Psi(\varphi(h)) \cdot \tilde{e}_{j}) (g_i (h ) \cdot e_\ell)\,.
\end{equation}
Combining \eqref{e:appendix2000} and~\eqref{e:appendix1000} we obtain for every $h \in \mathrm{B}_\delta(q)$
\begin{equation}
    \label{e:appendix3000}
    \partial_\ell\Psi(\varphi(h)) \cdot \tilde{e}_j = \langle g^j(h),e_\ell \rangle  \qquad  \text{for }\ell=1,\dotsc,m  \text{ and }  j=1,\dotsc,n\,.
\end{equation}

We notice that, because of the fact that $\rm M$ is isometrically embedded in $\mathbb{R}^m$, there is a natural homomorphism $\mathrm{i} \colon \mathscr{D}^1(U) \to C^\infty(\Omega;\mathbb{R}^{m*})$ acting as
\[
\mathrm{i}(a)(x) := \sum_i a_i(h)g^i(h), \qquad \text{ for } x \in U \text{ and }h = \psi(x). 
\]
Therefore we can consider coefficients $\tilde{a}_k:= \langle a,e_k \rangle$, so that $a(h)= \sum_{k=1}^m \tilde{a}_k(h) \, e^k$ and the (euclidean) gradient $\tilde{\nabla}(a)(h) \in \mathbb{M}^{m \times n}$ of~$a$ at $h \in \Omega$ is defined as
\begin{equation}
    \label{e:appendix1.1}
    [\tilde{\nabla}(a)]_{ij}(h) := \langle \partial_i \tilde{a}(h), e^j \rangle =  \partial_i \tilde{a}_j(h) \qquad \text{for $i=1,\dotsc,n$ and $j=1\dotsc,m$.}
\end{equation}
We want to find a precise relation between the gradient and the curvilinear gradient. In order to do so, we first write the following identity for every smooth function $w \colon \mathbb{R}^n \to \mathbb{R}$ and every $j=1,\dotsc,m$ 
 \begin{equation}
 \label{e:appendix4000}
 \partial_j (w \circ\Psi)(z) = \sum_{\ell=1}^n \partial_\ell w(\Psi(z)) \partial_j \Psi(z) \cdot \tilde{e}_\ell\,.
 \end{equation}
Now let $v \colon \mathbb{R}^m \to \mathbb{R}^{m*}$ be defined as $v(z) := a(\Psi(z))$. By~\eqref{e:appendix3000}, by~\eqref{e:appendix4000}, and by definition of~$\Gamma^{k}_{ij}$, for every $z \in \psi^{-1} ({\rm B}_{\delta} (q))$ and $i,j=1,\dotsc,m$ we compute 
\begin{align}
\label{e:appendix10099}
\langle \partial_j v(z),e_i\rangle & = \sum_{k=1}^n\partial_j (a_k \circ \Psi)(z)\langle g^k(\Psi(z)) ,e_i\rangle +\sum_{k=1}^n a_k(\Psi(z))\langle \partial_j(g^k \circ \Psi)(z),e_i \rangle\\
&= \sum_{k,\ell=1}^n\partial_\ell a_k(\Psi(z)) (\partial_j \Psi(z) \cdot \tilde{e}_\ell)  \langle g^k(\Psi(z)) ,e_i\rangle \nonumber
\\
&
\qquad  +\sum_{k,\ell=1}^n  a_k(\Psi(z))\langle \partial_\ell g^k(\Psi(z)),e_i \rangle (\partial_j \Psi(z) \cdot \tilde{e}_\ell) \nonumber \\
&= \sum_{k,\ell=1}^n\partial_\ell a_k(\Psi(z)) \langle g^\ell(\Psi(z)), e_j \rangle \langle g^k(\Psi(z)) ,e_i\rangle \nonumber
\\
&
\qquad +\sum_{k,\ell=1}^n a_k(\Psi(z))\langle \partial_\ell g^k(\Psi(z)),e_i \rangle \langle g^\ell(\Psi(z)), e_j \rangle \nonumber \\
&= \sum_{k,\ell=1}^n\partial_\ell a_k(\Psi(z)) \langle g^\ell(\Psi(z)), e_j \rangle \langle g^k(\Psi(z)) ,e_i\rangle \nonumber
\\
&
\qquad -\sum_{k,\ell,p=1}^n  a_k(\Psi(z)) \Gamma^k_{\ell p}(\Psi(z)) \langle g^p(\Psi(z)),e_i\rangle  \langle g^\ell(\Psi(z)),e_j \rangle \nonumber \\
&= \sum_{k,\ell=1}^n\big[\partial_\ell a_k(\Psi(z)) - \sum_{p=1}^n a_p(\Psi(z))\Gamma^p_{\ell k}(\Psi(z))\big]\langle g^\ell(\Psi(z)), e_j \rangle \langle g^k(\Psi(z)) ,e_i\rangle. \nonumber
\end{align}
Hence, for every $h \in \mathrm{B}_\delta(q)$ and $i,j=1,\dotsc,m$ we can compactly write
\begin{equation}
    \label{e:appendix5000}
    \langle \partial_j v(\varphi(h)),e_i\rangle= \sum_{k,\ell=1}^n\langle \nabla_{\ell} a(h),g_k(h)  \rangle\langle g^\ell(h), e_j \rangle \langle g^k(h) ,e_i\rangle.
\end{equation}

Therefore, letting $G \colon \mathrm{B}_\delta(q) \to \mathbb{M}^{n \times m}$ be defined as $G_{ij}(h):= \langle g^i(h), e_j \rangle$ for $i=1,\dots,n$ and $j=1,\dotsc,m$, in view of~\eqref{e:appendix1.1} and of~\eqref{e:appendix5000}, the gradient~$\tilde{\nabla} (v)$ and the curvilinear gradient $\nabla(a)$ are related by
\begin{equation}
    \label{e:appendix3.1}
    \tilde{\nabla}(v)(\varphi(h)) =G^\top(h) \, \nabla(a)(h) \, G(h) \qquad \text{for every }h \in \mathrm{B}_\delta(q)\,.
\end{equation}
Since the (euclidean) symmetric gradient $\tilde{e}(v)$ of~$v$ and the curvilinear symmetric gradient $e(a)$ of $a$ can be compactly written as
\[
\tilde{e}(v) := \frac{\tilde{\nabla}(v)+\tilde{\nabla}(v)^{\top}}{2} \qquad \text{and} \qquad e(a) := \frac{\nabla(a)+\nabla(a)^{\top}}{2},
\]
we may similarly relate $\tilde{e}(v)$ with the curvilinear symmetric gradient~$e(a)$ of~$a$ by
 \begin{equation}
     \label{e:appendix100999}
     \tilde{e}(v)(\varphi(h))  = G^{\top}(h) \,  e(a)(h)  \, G(h) \qquad  \text{for every }h \in \Omega\,.
 \end{equation}
Furthermore, it follows from~\eqref{e:appendix1000} that~$G(h)$ admits a right-inverse~$G^{-1}(h) \in \mathbb{M}^{n \times m}$ such that $G^{-1}_{ij}(h)=g_{j}(h) \cdot e_i$ for $i=1,\dotsc,n$ and $j=1,\dotsc,m$. Hence,~\eqref{e:appendix100999} can be inverted as
\begin{equation}
     \label{e:appendix100.1}
     G^{-\top}(h) \, \tilde{e}(v)(\varphi(h)) \, G^{-1}(h)  =  e(a)(h) \qquad \text{for every }h \in \mathrm{B}_\delta(q)\,.
 \end{equation}

 Now we let the field $F$ be defined as
\begin{equation}
\label{e:f=christoffel}
 F(h,v):= -\bigg(\sum_{i,j=1}^n\Gamma^1_{ij}(h)v_iv_j,\dotsc,\sum_{i,j=1}^n\Gamma^n_{ij}(h)v_iv_j \bigg), \qquad (h,v) \in \Omega \times \mathbb{R}^n.
\end{equation}
Choosing the function $g \colon \Omega \times \mathbb{R}^n \to \mathbb{R}^n$ to be the projection onto the second component, a straightforward computation shows that \eqref{e:operator1} is satisfied with $\mathcal{E}=e$ and with function $c_{\mathcal{E}}(\cdot) = |\cdot|^2$. With the above geometrical preliminaries in mind we want to verify that \eqref{hp:F2} is satisfied whenever $F$ has the form in \eqref{e:f=christoffel}. In particular, thanks to the weak-Poincar\'e's inequality, condition (2) of Corollary \ref{c:int2} will be satisfied. For this purpose we fix $x \in \mathrm{B}_\delta(q)$ and set $\varphi_{r,x}(h):= \varphi(x+rh)$ for $h \in B_{1}(0)$ and for every but sufficiently small $r>0$. Then, we have that
\begin{equation}\label{e:appendix7}
g_{i,r,x}(h) := r g_i(x+rh) = \partial_i \varphi_{r,x}(h) \qquad \text{for $i=1,\dotsc,n$ and  $h \in \mathrm{B}_1(0)$,}
\end{equation}
Similarly to~\eqref{e:A6} we set
\begin{equation}
\label{e:appendix6}
\Gamma^k_{ij,r,x}(h):= -\langle \partial_i g_{r,x}^k(h), g_{j,r,x}(h)\rangle \qquad \text{for $i,j,k=1,\dotsc,n$ and $h \in \mathrm{B}_1(0)$,}
\end{equation}
so that the function $F_{r,x}(h,v)= rF(x+rh,v)$ satisfies for $h \in \mathrm{B}_1(0)$ 
\begin{equation*}
 F_{r,x}(h,v)= -\Big( \sum_{i,j = 1}^{n} \Gamma^1_{ij,r,x}(h)v_iv_j, \dotsc, \sum_{i,j= 1}^{n} \Gamma^n_{ij,r,x}(h)v_iv_j\Big)\,.
\end{equation*}

Notice that the computation presented in~\eqref{e:appendix1000}--\eqref{e:appendix100.1} can be repeated with~$\varphi$ replaced by~$\varphi_{r, x}$. Thus, there exists $G_{r,x}  \colon {\rm B}_1(0) \to \mathbb{M}^{n \times m}$ satisfying relation \eqref{e:appendix3.1} with $\{g_1,\dotsc,g_n\}$ replaced by $\{g_{1,r,x},\dotsc,g_{n,r,x}\}$. Since by definition we have that 
\begin{align}
\label{e:appendix23.1}
& r(G_{r,x})_{ij} \to  \langle g^i(x), e_j \rangle  \qquad   r^{-1}(G^{-1}_{r,x})_{ji}\to  \langle g_i(x), e_j \rangle  \ \ \text{ in } C^\infty({\rm B}_1(0))\,, \\
\label{e:appendix23}
& \ \ \ \ \ \ \ \ \ \ \ \ \ \ \ \ \ \ \ \ \ \ \ \ \  \Gamma^k_{ij,r,x} \to 0 \qquad \text{in } C^\infty({\rm B}_1(0))\,,
\end{align}
as $r\searrow 0$, we find a radius $r_x>0$ such that for $0< r \leq r_x$
\begin{align}
    \label{e:appendix16}
   &  \|\nabla G^{-\top}_{r,x}\|_{L^{\infty} (B_{1}(0))} \| G^{\top}_{r,x} \|_{L^{\infty} (B_{1}(0))} 
    \\
    &
    \qquad \qquad + \| G_{r,x} \|_{L^{\infty} (B_{1}(0))} \|\nabla G^{-1}_{r,x}\|_{L^{\infty} (B_{1}(0))} \leq \frac{1}{64(n^2+n)}\,. \nonumber
\end{align}

Let $z \in Q(n)$ and $r\in (0,r_x]$. To each $w \in \mathbb{R}^{(n+1)\times n} $ we associate $\underline{w}_{r, z} \in \mathbb{R}^{(n+1)\times m}$ by
\begin{equation}
\label{e:appendix15}
 \underline{w}^i_{r, z} := \sum_{k=1}^n w^i_k \, g_{r,x}^k(z+\tilde{e}_i)  \qquad \text{for $i=0,\dotsc,n$.}
\end{equation}
Let $v_r \colon \mathbb{R}^m \to \mathbb{R}^{m*}$ be any linear-affine interpolation of the values $\{\underline{w}^0_{r, z},\dotsc,\underline{w}^n_{r, z} \}$, namely, a function fulfilling
\begin{equation}
    \label{e:appendix14}
   \tilde{\nabla}v_r(\zeta) \text{ is constant in $\zeta \in \mathbb{R}^m$} \ \ \text{and} \ \  v_r(\varphi_{r, x } ( z + \tilde{e}_i  ) ) = \underline{w}^i_{r, z} \qquad \text{for $i=0,\dotsc,n$.}
\end{equation}
Let us define $a_r \colon \mathrm{B}_1(0) \to \mathbb{R}^{m*}$ as $a_r(h):= v_r(\varphi_{x,r}(h))$. We infer from~\eqref{e:appendix15}--\eqref{e:appendix14} that the curvilinear coordinates~$\{(a_{r})_{j}\}_{j =1}^{n}$ of~$a_r$ satisfy
\begin{equation}
    \label{e:appendix14.1}
    (a_r)_j( z + \tilde{e}_i) = w^i_j \qquad \text{for $i=0,\dotsc,n$ and  $j=1,\dotsc,n$.}
\end{equation}
In view of \eqref{e:appendix14.1}, to show the validity of \eqref{hp:F2} we are left to prove that $a_r$ fulfills \eqref{e:rip5}.

\begin{theorem}
\label{t:RI-manifold}
The function $a_{r}$ satisfies~\eqref{e:rip5} of the Rigid interpolation property~\eqref{hp:F2}.
\end{theorem}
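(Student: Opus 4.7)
The plan is to exploit the defining structure $a_r(h) = v_r(\varphi_{r,x}(h))$ with $v_r$ linear-affine on $\mathbb{R}^m$. Since $v_r$ is affine, its Euclidean symmetric gradient $S := \tilde{e}(v_r)$ is a constant symmetric matrix on $\mathbb{R}^m$, and the $r$-rescaled version of~\eqref{e:appendix100.1} yields the pointwise identity $e(a_r)(h) = G_{r,x}^{-\top}(h) \, S \, G_{r,x}^{-1}(h)$, where $e(\cdot)$ denotes the curvilinear symmetric gradient. In the Riemannian setting~\eqref{e:f=christoffel}, the operator $\tilde{e}(\cdot) - (\cdot) \cdot F^q_{r,x}$ coincides with $e(\cdot)$ up to terms involving $\Gamma_{r,x} = O(r)$ that can be absorbed into the final constant; combined with $\|G_{r,x}^{-1}\|_{L^\infty} = O(r)$ from~\eqref{e:appendix23.1}, this reduces~\eqref{e:rip5} to the estimate $\|S|_V\| \leq c(n)\, r^{-2} E_{r,z}(w)$, where $V := \mathrm{span}\{g_k(x)\}_{k=1}^n$ is the tangent space to $M$ at $x$.

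Next I would derive the key integral identity along the manifold geodesic $\beta_{ij}(t) := \varphi_{r,x}(\ell_{z,r,ij}(t))$. By the chain rule $a_r(\ell_{z,r,ij}(t)) \cdot \dot{\ell}_{z,r,ij}(t) = v_r(\beta_{ij}(t)) \cdot \dot{\beta}_{ij}(t)$; differentiating, using that $\tilde{\nabla} v_r$ is constant, and integrating from $0$ to $t_{ij}$ with the interpolation condition $v_r(p_i) = \underline{w}^i$ from~\eqref{e:appendix15} yields
\begin{equation*}
w^j \cdot \xi_{r,ji}(z) - w^i \cdot \xi_{r,ij}(z) = \int_0^{t_{ij}} \bigl( S\dot\beta_{ij} \cdot \dot\beta_{ij} + v_r(\beta_{ij}) \cdot \ddot\beta_{ij} \bigr)\, dt.
\end{equation*}
The correction term is handled using that $\ddot\beta_{ij}$ is purely normal to $TM$, being a geodesic of the embedded manifold, with $\|\ddot\beta_{ij}\|_{L^\infty} = O(r^2)$, while $\|v_r\|_{L^\infty(\varphi_{r,x}(\mathcal{S}_{n,z}))} \leq \max_i |\underline{w}^i| = O(r^{-1}|w|_\infty)$ by affineness on a convex hull; this contributes $O(r|w|_\infty)$. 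Substituting the expansion $\dot\beta_{ij}(t) = r D\varphi(x)(e_j - e_i)/|e_j - e_i| + O(r^2)$, at leading order the identity reads $w^j \cdot \xi_{r,ji}(z) - w^i \cdot \xi_{r,ij}(z) = |e_j - e_i|^{-1}(r^2 S)\, u_{ij} \cdot u_{ij} + O(r|w|_\infty)$, with $u_{ij} := D\varphi(x)(e_j - e_i) \in V$.

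Since $\{u_{0k} = g_k(x)\}_{k=1}^n$ is a basis of $V$, polarization of the quadratic form $r^2 S|_V$ on the pairs $\{u_{ij}\}_{0\leq i < j\leq n}$ expresses every mixed entry $(r^2 S) u_{0i}\cdot u_{0j}$ as a bounded linear combination of the diagonal terms $(r^2 S) u_{ij}\cdot u_{ij}$, the condition number of $\{g_k(x)\}_{k=1}^n$ being absorbed into $c(n)$ via a local choice of $R_0$. This produces $r^2 \|S|_V\| \leq c(n)\bigl(E_{r,z}(w) + O(r|w|_\infty)\bigr)$. The main obstacle will be absorbing the residual $O(r|w|_\infty)$ — which is not manifestly controlled by $E_{r,z}(w)$ alone — into a $w$-homogeneous estimate; I plan to close this via a bootstrap coupling the polarization bound with the trivial a priori estimate $\|S\| = O(r^{-2}|w|_\infty)$ coming from the interpolation data, and invoking~\eqref{e:appendix16} together with a further smallness reduction of $r_x$, at which point the desired~\eqref{e:rip5} follows.
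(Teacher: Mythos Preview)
Your proposal has a genuine gap at the absorption step. The residual $O(r|w|_\infty)$ you obtain from the manifold-side integral identity cannot be closed by any bootstrap against the trivial bound $\|S\|=O(r^{-2}|w|_\infty)$: the seminorm $E_{r,z}(\cdot)$ has a nontrivial kernel (precisely the data $w$ arising from Killing fields of the rescaled metric), so $|w|_\infty$ is simply not dominated by $E_{r,z}(w)$, and feeding the trivial estimate back in only reproduces a term of the same order. Concretely, take $w$ in the kernel of $E_{r,z}$ with $|w|_\infty=1$: your inequality then reads $0\le c(n)\cdot 0 + O(r)$, which is true but tells you nothing, while~\eqref{e:rip5} demands $\|e(a_r)\|_{L^\infty}\le 0$. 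No amount of shrinking $r_x$ rescues this, because the estimate must hold for \emph{all} $w$ at a fixed $r$. A secondary (fixable) imprecision: reducing~\eqref{e:rip5} to a bound on $\|S|_V\|$ is not quite right either, since $e(a_r)(h)=G_{r,x}^{-\top}(h)\,S\,G_{r,x}^{-1}(h)$ involves $S$ on the \emph{varying} tangent spaces $T_{\varphi_{r,x}(h)}M$, not on the fixed $V$.

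The paper's argument avoids the $|w|_\infty$-residual entirely by staying in coordinates. There the identity along $\ell_{z,r,ij}$ is \emph{exact},
\[
w^j\cdot\xi_{r,ji}(z)-w^i\cdot\xi_{r,ij}(z)=\int_0^{t_{ij}} e(a_r)(\ell(t))\,\dot\ell(t)\cdot\dot\ell(t)\,\di t,
\]
because the term $a_r(\ell)\cdot\ddot\ell$ is precisely the Christoffel correction that turns $\tilde e$ into $e$; no $v_r(\beta)\cdot\ddot\beta$ term ever appears. The only error is then the \emph{oscillation} of $e(a_r)$ along $\ell$, and the crucial point is that this oscillation is controlled by a small multiple of $\|e(a_r)\|_{L^\infty}$ itself, not of $|w|_\infty$. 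This is obtained from the factored form $e(a_r)=G_{r,x}^{-\top}\,S\,G_{r,x}^{-1}$ with $S$ constant, by differentiating along a segment and using the \emph{scale-invariant} smallness of $\|\nabla G_{r,x}^{-\top}\|\,\|G_{r,x}^{\top}\|+\|G_{r,x}\|\,\|\nabla G_{r,x}^{-1}\|$ recorded in~\eqref{e:appendix16}; see~\eqref{e:appendix21}. With the error self-referential, polarization on the directions $\tilde e_{ij}$ yields $|e(a_r)(z)|\le c(n)E_{r,z}(w)+\theta\,\|e(a_r)\|_{L^\infty}$ with $\theta<1$, and absorption closes the argument. The missing idea in your plan is exactly this: the errors must be phrased as small multiples of $\|e(a_r)\|_{L^\infty}$, never of $|w|_\infty$.
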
 

\begin{proof}
We notice that the function~$a_{r}$ appearing in~\eqref{e:rip4}--\eqref{e:rip5} is the vector $((a_{r})_{1}, \ldots, (a_{r})_{n})$ of curvilinear coordinates of~$a_{r}$. However, below we will continue considering the map $a_{r} \colon {\rm B}_{1}(0) \to \R^{m*}$, in order to use relations~\eqref{e:appendix3.1}--\eqref{e:appendix100.1}. In particular, for $x$ fixed let us start with~$r_{x}>0$ such that~\eqref{e:appendix16} holds.

We claim that, up to redefine the value of $r_x>0$, we have for every $(z,w,r) \in Q(n) \times\mathbb{R}^{(n+1)\times n} \times (0,r_x]$ 
\begin{equation}
    \label{e:appendix21}
    |e(a_r)(h)-e(a_r)(h')| \leq \frac{\|e(a_r)\|_{L^\infty(\mathrm{B}_1(0))}}{  16  (n+1)n} \qquad \text{for } h,h' \in \mathrm{B}_1(0)\,.
\end{equation}
Let us fix $h, h' \in {\rm B}_{1}(0)$ and let $\sigma_{h, h'} (s) :=  sh+(1-s)h'$ for $s \in [0,1]$. By~\eqref{e:appendix100999}--\eqref{e:appendix100.1} and by the fact that~$v_r$ is affine, we estimate
\begin{align*}
    | e(a_r)(h) & - e(a_r)(h') |  \leq \int_0^1 \bigg|\frac{\di}{\di s} e(a_r)(\sigma_{h,h'} (s) )\bigg| \, \di s 
    \\
    &
    = \int_0^1 \bigg|\frac{\di }{\di s} G_{r,x}^{-\top}(\sigma_{h,h'} (s))  \tilde{e}(v_r)(\varphi_{r, x} ( \sigma_{h,h'} (s)))  G_{r,x}^{-1} ( \sigma_{h,h'} (s) )\bigg| \, \di s \\
    &\leq \int_0^1 | \nabla G_{r,x}^{-\top} ( \sigma_{h,h'} (s) ) \cdot (h-h')  \tilde{e}(v_r) ( \varphi_{r, x} (\sigma_{h,h'} (s)) )  G_{r,x}^{-1} ( \sigma_{h,h'} (s) ) | \, \di s \\
    &\qquad + \int_0^1 | G_{r,x}^{-\top} ( \sigma_{h,h'} (s) ) \tilde{e}(v_r)(\varphi_{r, x} (\sigma_{h,h'} (s)) ) \nabla G_{r,x}^{-1}(\sigma_{h,h'} (s) )\cdot (h-h') | \, \di s \\
    &= \int_0^1 | \nabla G_{r,x}^{-\top}(\sigma_{h,h'} (s)) \cdot (h-h') G_{r,x}^{\top} ( \sigma_{h,h'} (s) ) e(a_r)(\sigma_{h,h'} (s)) | \, \di s \\
    &\qquad + \int_0^1 | e ( a_r ) ( \sigma_{h,h'} (s) )G_{r,x}(\sigma_{h,h'} (s)) \nabla G_{r,x}^{-1} ( \sigma_{h,h'} (s) )\cdot (h-h') | \, \di s \\
    &\leq   4  \| e(a_r) \|_{L^\infty(\mathrm{B}_1(0))} \big( \|\nabla G_{r,x}^{-\top}\|_{L^\infty(\mathrm{B}_1(0))} \|G_{r,x}^{\top} \|_{L^\infty(\mathrm{B}_1(0))}
    \\
    &
    \qquad + \|G_{r,x}\|_{L^\infty(\mathrm{B}_1(0))}\|\nabla G_{r,x}^{-1}\|_{L^\infty(\mathrm{B}_1(0))} \big)\,.
    \end{align*}
    The above inequality, together with~\eqref{e:appendix16}, implies~\eqref{e:appendix21}.
 We can thus infer from~\eqref{e:appendix21} the validity of the following estimate
\begin{equation}
\label{e:appendix22}
     \|e(a_{r})\|_{L^\infty(\mathrm{B}_1(0))} \leq |e(a_{r} (h))| + \frac{\|e(a_{r})\|_{L^\infty(\mathrm{B}_1(0))}}{ 16  (n+1)n} \qquad \text{for every }h \in \mathrm{B}_1(0)\,.
    \end{equation}
    
We further claim that, up to redefine $r_x>0$, the following inequality holds true for $(z,w,r) \in Q(n) \times\mathbb{R}^{(n+1)\times n} \times (0,r_x]$
\begin{equation}
    \label{e:appendix19}
    |e(a_{r} ( z ) ) | \leq c(n) \Big(  E_{r,z}(w) + \frac{ 5  }{8}  \|e(a_{r})\|_{L^\infty(\mathrm{B}_1(0))} \Big)\,,
\end{equation}
for a dimensional constant~$c(n)>0$. Indeed, given $i,j=0,1,\dotsc, n$, let $\ell_{z,r,ij} \colon [0,t_{ij}] \to \mathbb{R}^n$ and $\xi_{r,ij} \colon \mathrm{B}_1(0) \to \mathbb{R}^n$ be defined in Section~\ref{s:curvilinear} (see \eqref{e:poincare15000}). By~\eqref{e:appendix21} we infer that for every $0 <r\leq r_x$
\begin{align}
\label{e:appendix-120}
w^j \cdot \xi_{r,ji}(z) - w^i \cdot \xi_{r,ij}(z) & = \int_0^{t_{ij}} \frac{\di}{\di t} \big( a_{r} (\ell_{z,r,ij}(t)) \cdot \dot{\ell}_{z,r,ij}(t) \big) \, \di t
\\
&
=\int_0^{t_{ij}} e(a_{r} (\ell_{z,r,ij}(t))) \dot{\ell}_{z,r,ij}(t) \cdot \dot{\ell}_{z,r,ij}(t) \, \di t \nonumber
\\
&
\geq t_{ij} e(a_{r} (z+e_i))\xi_{r,ij}(z) \cdot \xi_{r,ij}(z) \nonumber
\\
& \qquad -  \|e(a_{r})\|_{L^\infty(\mathrm{B}_1(0))}\bigg(\int_0^{t_{ij}} |\xi_{r,ij}(z) -\dot{\ell}_{z,r,ij}(t)| \, \di t \nonumber
\\
&
 \qquad +  \int_{0}^{t_{ij}} | \dot{\ell}_{z, r, ij} (t) | \, | \xi_{r,ij}(z) -\dot{\ell}_{z,r,ij}(t)| \, \di t   +\frac{t_{ij}}{16(n+1)n} \bigg) \nonumber 
 \end{align}

Setting $\tilde{e}_{ij}:= (\tilde{e}_j-\tilde{e}_i)/|\tilde{e}_j-\tilde{e}_i|$ we have, by~\eqref{e:appendix23}, that 
\[
\begin{split}
&t_{0j} \to 1  \ \ \text{ for } 1\leq  j \leq n\\
&t_{ij} \to \sqrt{2} \ \ \text{ for } 1\leq i<  j \leq n\\
&\dot{\ell}_{z,r,ij}(t) \to \tilde{e}_{ij} \ \ \text{ for } 0\leq i < j \leq n 
\end{split}
\]
as $r \searrow 0$, uniformly w.r.t.~$t \in [0, t_{ij}]$ and $z \in Q(n)$. Hence, up to further redefine $r_x>0$, we infer from~\eqref{e:appendix-120} that
\begin{equation}
    \label{e:appendix24}
     |e(a_{r} (z+e_i)) \xi_{r,ij}(z) \cdot \xi_{r,ij}(z)| \leq  |w^j \cdot \xi_{r,ji}(z)-w^i \cdot \xi_{r,ij}(z)| + \frac{\|e(a_{r} ) \|_{L^\infty(\mathrm{B}_1(0))}}{2(n+1)n},
\end{equation}
for every $(z,w,r) \in Q(n) \times\mathbb{R}^{(n+1)\times n} \times (0,r_x]$ and every $0 \leq i < j \leq n$. Thanks to~\eqref{e:appendix21} and~\eqref{e:appendix24}, we may further estimate for $(z,w,r) \in Q(n) \times\mathbb{R}^{(n+1)\times n} \times (0,r_x]$,
\begin{align*}
    &|e(a_{r}(z))\tilde{e}_{ij} \cdot \tilde{e}_{ij}| \leq |e(a_{r} ( z + e_i) ) \xi_{r,ij} (z) \cdot \xi_{r,ij}(z)| \\
    \vphantom{\frac12}&\quad  + | [ e ( a_{r} ( z + e_i) ) - e (a_{r} ( z ) ) ] \xi_{r,ij} (z) \cdot \xi_{r,ij}(z) |
    + | e ( a_{r} ( z ) ) [ \xi_{r,ij}(z) \cdot \xi_{r,ij}(z) - \tilde{e}_{ij} \cdot \tilde{e}_{ij} ]|\\
    &  \leq |w^j \cdot \xi_{r,ji}(z)-w^i \cdot \xi_{r,ij}(z)| + \frac{\|e(a)\|_{L^\infty(\mathrm{B}_1(0))}}{2(n+1)n} + \frac{\|e(a)\|_{L^\infty(\mathrm{B}_1(0))}}{16(n+1)n}
    \\ 
    \vphantom{\frac12}&\quad +2\|e(a)\|_{L^\infty(\mathrm{B}_1(0))}|\xi_{r,ij}(z)-\tilde{e}_{ij}|,
\end{align*}
which together with convergence \eqref{e:appendix23} gives, up to possibly redefine once again $r_x>0$, that
\begin{equation}
    \label{e:appendix25}
    \begin{split}
    |e(a_{r} (z))\tilde{e}_{ij} \cdot \tilde{e}_{ij}| &\leq |w^j \cdot \xi_{r,ji}(z)-w^i \cdot \xi_{r,ij}(z)|
    + \frac{5\|e(a_{r})\|_{L^\infty(\mathrm{B}_1(0))}}{8(n+1)n}\,, 
    \end{split}
\end{equation}
whenever $(z,w,r) \in Q(n) \times\mathbb{R}^{(n+1)\times n} \times (0,r_x]$ and $0 \leq i < j \leq n$. Noticing that
\[
|e(a_{r}(z))|  \leq c(n) \sum_{0 \leq i < j \leq n} |e(a_{r}(z))\tilde{e}_{ij} \cdot \tilde{e}_{ij}|\,,
\]
for some dimensional constant $c(n)>0$, combining \eqref{e:appendix24} with \eqref{e:appendix25} we infer the validity of \eqref{e:appendix19}. Using \eqref{e:appendix19} in \eqref{e:appendix22} we obtain for $(z,w,r) \in Q(n) \times\mathbb{R}^{(n+1)\times n} \times (0,r_x]$
\begin{equation*}
\|e(a_{r})\|_{L^\infty(\mathrm{B}_1(0))} \leq \frac{32c(n)}{11} \, E_{r,z}(w)
\end{equation*}
 which is exactly~\eqref{e:rip5}.
\end{proof}


\appendix

\section{Proofs of Propositions \ref{c:relje} and \ref{p:r=rxi}}
\label{appendix}

\begin{proof}[Proof of Proposition~\ref{c:relje}]
 For $\mathcal{H}^{n-1}$-a.e.~$x \in R$ we set  
\[ 
R^\pm(x) := \{z\in \mathbb{R}^n : \, \pm (z-x) \cdot \nu_{R}(x) > 0 \}\,.
\]
We divide the proof into two steps.

\underline{\em Step 1}. Let us first suppose that $R$ is a $C^1$-manifold of dimension $n-1$. In order to ease the notaion we let  $v:=\tau(u_\xi)$. We can rewrite~\eqref{e:corollary-relje} in integral terms as
\begin{equation}
    \label{e:relje4}
    \lim_{r \searrow 0}  \mint_{{\rm B}_r(x) \cap R^\pm(x)}  |v(z)-\arctan(a)| \, \di z =0 \ \ \text{ and }\ \  a=\aplim_{s \to t^{\pm\sigma(x)}}  \hat{u}^{\xi}_{y}(t) \,, 
\end{equation}
whenever $x=\varphi(y+t\xi)$. If we perform the change of variable induced by $\varphi$, the right-hand side of \eqref{e:relje4} becomes
\begin{equation}
    \label{e:relje2}
    \lim_{r \searrow 0} D(x,r) \mint_{\varphi^{-1}({\rm B}_r(x) \cap R^\pm(x))}    |v(\varphi(z)) -\arctan(a)|\,|\text{J}\varphi(z)| \, \di z = 0\,,
\end{equation}
where
\[
D(x,r):=\frac{\mathcal{L}^n(\varphi^{-1}({\rm B}_r(x) \cap R^\pm(x)))}{\mathcal{L}^n({\rm B}_r(x) \cap R^\pm(x))}.
\]
Since Area Formula gives
\[ 
\lim_{r \searrow 0} \, D(x,r) =  J \varphi^{-1}(x)\,,
\]
by virtue of~\eqref{e:relje2} we can reduce ourselves to prove that
\begin{equation}
    \label{e:relje3}
    \lim_{r \searrow 0}  \mint_{\varphi^{-1}({\rm B}_r(x) \cap R^\pm(x))} |v(\varphi(z))-\arctan(a)| \, \di z=0 \ \ \text{ and }\ \  a=\aplim_{s \to t^{\pm\sigma(x)}}\hat{u}_{y}(s)\,.
\end{equation}
It is not difficult to verify that for every $x'$ in the domain of $\varphi$ we have 
\[
[D\varphi(x')]^{\top} \nu_{R} (\varphi(x')) \in \text{Tan}^\bot(\varphi^{-1}(R),x') \,.
\]
Using this last information together with the following identity
\begin{equation*}
[D\varphi(x')]^{\top} \nu_{R} (\varphi(x')) \cdot \xi =  \nu_{R} (\varphi(x')) \cdot [D\varphi(x')]\xi = \nu_{R} (\varphi(x')) \cdot \xi_{\varphi} (\varphi(x')) \,,
\end{equation*}
we infer that the set $\varphi^{-1}(\{x \in R \cap U : \, \nu_{R}(x) \cdot \xi_{\varphi} (x) \neq 0\})$ has no vertical part with respect to the direction~$\xi$. Now let us set
\[
R_0 :=\{x \in R \cap U :  \, \nu_{R} (x) \cdot \xi_\varphi (x) \neq 0\}\,.
\] 
Applying the Implicit Function Theorem, we find a covering if $\varphi^{-1}(R_0)$ made of at most countably many open cylinders $C_i$ such that $C_i \cap \varphi^{-1}(R_0)$ is the graph of a $C^1$-regular functions with values in $\mathbb{R}$ and defined on $\pi_\xi(C_i)$ (being $\pi_\xi \colon \mathbb{R}^n \to \xi^\bot$ the orthogonal projection). Since for every $i \in \mathbb{N}$ we have from hypothesis $v(\varphi(\cdot)) \in L^1(C_i)$ and $D_\xi v(\varphi(\cdot)) \in \mathcal{M}_b(C_i)$, we are in position to apply \cite[Theorem 5.1]{dal} to find $v^\pm_R \in L^1(\varphi^{-1}(R_0) \cap C_i;\mathcal{H}^{n-1})$ satisfying
\begin{align}
    \label{e:relje5}
    & v^\pm_R(x') = \aplim_{s \to t'^{\pm}} \, v( \varphi(y'+s\xi)) \,,
    \\
    \label{e:relje5.1.1}
    & \lim_{r \searrow 0} \, \frac{1}{r^n}\int_{{\rm B}_r(x') \cap C_i^\pm} |v(\varphi(z)) - v^\pm_R(x')| \, \di z =0 \, ,
\end{align}
for $\mathcal{H}^{n-1}$-a.e.~$y' \in \pi_\xi(C_i)$ and every $t' \in R^\xi_{y'}$, whenever $x'=y'+t'\xi$ and  $C_i^{+}$ and~$C^{-}_{i}$ are the supgraph and subgraph of $\varphi^{-1}(R_0) \cap C_i$, respectively. Using identity~\eqref{e:sliceide} and defining $u^\pm_R \colon R \to \mathbb{R}$ as $u^\pm_R(x):= v^\pm_R(\varphi^{-1}(x))$, we can make use of \eqref{e:relje5}--\eqref{e:relje5.1.1} to infer
\[
\arctan^{-1}(u^\pm_R(x))= \aplim_{s \to t^{' \pm \sigma(x) }}\hat{u}^{\xi}_{y'}(s), \ \ \text{for $\mathcal{H}^{n-1}$-a.e. $y' \in \pi_\xi(C_i)$ and every $t' \in R^\xi_y$},
\]
whenever one between the approximate limits in \eqref{e:corollary-relje} exists at $x= \varphi(y'+t'\xi)$. 

To conclude, we need to prove that, defining $a:=\arctan^{-1}(u^\pm_R(x))$, the first equality in~\eqref{e:relje3} is satisfied for $\mathcal{H}^{n-1}$-a.e.~$y' \in \pi_\xi(C_i)$ and every $t' \in R^\xi_{y'}$ whenever $x \in \varphi(y'+t'\xi)$. Letting $x=\varphi(x')$ and $x'=y'+t'\xi$, we find a constant~$c>0$ depending only on~$x'$ such that
\begin{align}
\label{e:phi-1}
\mint_{{}_{\scriptstyle \varphi^{-1}({\rm B}_r(x) \cap R^\pm(x))}} \!\!\!\!\!\!\!\! |v(\varphi(z))-u^\pm_{R}(x)| \, \di z &=
\mint_{{}_{\scriptstyle \varphi^{-1}({\rm B}_r(x) \cap R^\pm(x))}} \!\!\!\!\!\!\!\! |v(\varphi(z))-v^\pm_{R}(x')| \, \di z \\ &= \mint_{{}_{\scriptstyle \varphi^{-1}({\rm B}_r(x)) \cap \varphi^{-1}(R^\pm(x))}} \!\!\!\!\!\!\!\! |v(\varphi(z))-v^\pm_{R}(x')| \, \di z\nonumber\\
&\leq \frac{c}{r^n} \int_{{}_{\scriptstyle {\rm B}_{cr}(x') \cap C_i^{\pm}}} \!\!\!\!\!\!\!\! |v(\varphi(z))-v^\pm_{R}(x')| \, \di z \nonumber\\
&\qquad + c\, \frac{\mathcal{L}^n[(\varphi^{-1}(R^\pm(x)) \setminus C_i^{\pm})\cap {\rm B}_{cr}(x')]}{r^n}\,.\nonumber
\end{align}
By exploiting that~$R$ is a $C^{1}$-manifold and that in particular $\text{Tan}(R,x)$ exists for every $x \in R$, it is not difficult to verify that 
\begin{equation}
\label{e:Rto0}
\lim_{r \to 0^{+}} \, \frac{\mathcal{L}^n[(\varphi^{-1}(R^\pm(x)) \setminus C_i^{\pm})\cap {\rm B}_{cr}(x')]}{r^n} = 0 \,.
\end{equation}
Using equality \eqref{e:Rto0} together with \eqref{e:relje5.1.1} in \eqref{e:phi-1} yields immediately the desired conclusion. This concludes the proof in the case of a $C^1$-regular manifold.

\underline{\em Step 2}. Let us now consider the of a countably $(n-1)$-rectifiable set $R$. By~\cite[Theorem 3.2.29]{fed} we can choose countably many $C^1$-regular manifolds~$\{R_{i}\}_{i \in \mathbb{N}}$ of dimension $n-1$ and such that 
\begin{equation}
\label{e:relje99999}
\mathcal{H}^{n-1} \Big( R \setminus \bigcup_{i \in \mathbb{N}} R_i \Big) = 0\,.
\end{equation}
Hence, for every $i=1,2,\dotsc$ we deduce from Step~1 the validity of \eqref{e:corollary-relje} with $R$ replaced by $R_i$. Finally, we infer \eqref{e:corollary-relje} since \eqref{e:relje99999} together with the lipschitzianity of the map $P$ imply $\mathcal{H}^{n-1}(P(R \setminus \bigcup_i R_i))=0$.
\end{proof}

\begin{proof}[Proof of Proposition \ref{p:r=rxi}.]
For every $x \in \Om$, let us define $  \psi_{x} \EEE \colon \mathbb{S}^{n-1} \to \mathbb{S}^{n-1}$ as $  \psi_{x} \EEE (\xi) :=  \frac{\xi_{\varphi}(x)}{| \xi_{\varphi} (x) |} $ for every $\xi \in \mathbb{S}^{n-1}$. We claim that for every $x \in \Om$ and every Borel subset~$B$ of~$\mathbb{S}^{n-1}$ it holds
\begin{equation}
\label{e:claimg}
\mathcal{H}^{n-1} (B) =0 \ \Longrightarrow \ \mathcal{H}^{n-1} (  \psi_{x} \EEE^{-1}(B)) = 0\,.
\end{equation}
 Indeed, by the Area formula (see, e.g.,~\cite[Section~2.10]{afp}) we have that for every $B \in   \mathcal{B}( \mathbb{S}^{n-1}) \EEE $,
\[
\int_{g^{-1}(B)} |\text{J}_{\tau}   \psi_{x} \EEE (\xi)|\, \di \mathcal{H}^{n-1}(\xi) = \int_{B} \mathcal{H}^0(\{  \psi_{x} \EEE^{-1}(\eta)\})\, \di \mathcal{H}^{n-1}(\eta)\,,
\]
 where~$\text{J}_{\tau}  \psi_{x} \EEE$ denotes the tangential Jacobian of~$  \psi_{x} \EEE$. Condition   (4) \EEE of Definition~\ref{d:CP}
implies that
\begin{equation}
\label{e:limg}
\mathcal{H}^{n-1}(  \psi_{x} \EEE^{-1}(B)) = \lim_{t \searrow 0} \mathcal{H}^{n-1} \big(   \psi_{x} \EEE^{-1}(B) \cap \{|\text{J}_\tau g|>t\} \big)\,.
\end{equation}
Hence, for every $t>0$ it holds
\begin{align}
\label{e:hg-1}
\mathcal{H}^{n-1} \big(   \psi_{x} \EEE^{-1}(B) \cap \{|\text{J}_\tau   \psi_{x} \EEE | > t \} \big) &\leq \frac{1}{t}\int_{  \psi_{x} \EEE^{-1}(B)} |\text{J}_{\tau}   \psi_{x} \EEE (\xi)|\, \di \mathcal{H}^{n-1}(\xi)\\
&=  \frac{1}{t} \int_{B} \mathcal{H}^0( \{   \psi_{x} \EEE^{-1}(\eta)\})\, \di \mathcal{H}^{n-1}(\eta)\,. \nonumber
\end{align}
From~\eqref{e:limg} and~\eqref{e:hg-1} we deduce~\eqref{e:claimg}. 

We notice that the set
\[
N := \{(x,\xi) \in R \times \mathbb{S}^{n-1} : \, \nu_{R}(x) \cdot \xi_{\varphi}(x)=0  \}
\]
is Borel (whenever $\nu_{R} \colon R \to \mathbb{S}^{n-1}$ is Borel). Thus, by Fubini's theorem and by~\eqref{e:claimg} we get that
\[
\begin{split}
\big( \mathcal{H}^{n-1} \restr R \otimes \mathcal{H}^{n-1} \restr \mathbb{S}^{n-1}\big) (N)&=\int_{R } \mathcal{H}^{n-1} \big(\{\xi \in \mathbb{S}^{n-1} : \, \nu_{R}(x) \cdot \xi_{\varphi}(x)=0\} \big)\, \di \mathcal{H}^{n-1}(x)\\
&=\int_{R } \mathcal{H}^{n-1} \big(   \psi_{x} \EEE^{-1} ( \nu_{R}(x)^\bot \cap \mathbb{S}^{n-1}) \big)\, \di \mathcal{H}^{n-1}(x)=0\,.
\end{split}
\]
Therefore, again by Fubini's theorem we infer that
\[
\begin{split}
\int_{\mathbb{S}^{n-1}} \mathcal{H}^{n-1}(R\setminus R^\xi)\, \di \mathcal{H}^{n-1}(\xi)&=\int_{\mathbb{S}^{n-1}} \mathcal{H}^{n-1}\big( \{x \in R :\, \nu_{R}(x) \cdot \xi_{\varphi}(x)=0  \} \big)\, \di \mathcal{H}^{n-1}(\xi)\\
&= \big( \mathcal{H}^{n-1} \restr R \otimes \mathcal{H}^{n-1} \restr \mathbb{S}^{n-1} \big) (N) = 0 \,.
\end{split}
\]
This proves~\eqref{e:r=rxi}.
\end{proof}

\begin{proof}[Proof of Proposition \ref{p:prodmeas}.]
 Combining Propositions~\ref{c:relje} and~\ref{p:r=rxi}  it is not difficult to verify the validity of the following property
\begin{equation*}
    \aplim_{\substack{z \to x \\ \pm(z-x) \cdot \nu_{R} (x) >0}} u_\xi(z)  = \aplim_{s \to t^{\pm\sigma(x)}_x}\hat{u}^\xi_{P_\xi(x)}(s) \ \ \text{for $\mathcal{H}^{n-1}$-a.e. $\xi \in \mathbb{S}^{n-1}$, $\mathcal{H}^{n-1}$-a.e. $x \in R$},
\end{equation*}
 whenever at least one between the approximate limits exists. Therefore, equality \eqref{e:nrelje100} can be obtained as a straightforward consequence of Fubini's theorem once that we prove the $(\mathcal{H}^{n-1} \restr R \otimes \mathcal{H}^{n-1} \restr \mathbb{S}^{n-1})$-measurability of~$\Delta$. We claim that the set~$\Delta$ is a Borel subset of~$R \times \mathbb{S}^{n-1}$. By eventually considering the composition $\arctan(u_\xi)$ we may suppose that the $u_\xi$ take values in $[-\pi/2,\pi/2]$ for every $\xi \in \mathbb{S}^{n-1}$.


We introduce the functions 
\begin{align*}
c^+(x,\xi)&:=\limsup_{r \searrow 0} \, \sigma(x)\mint_{0}^{r} \hat{u}^\xi_{P_\xi(x)}(t^\xi_x+\sigma(x) t ) \, \di t\,,\\
c^-(x,\xi)&:=\liminf_{r \searrow 0}\, \sigma(x)\mint_{0}^{r} \hat{u}^\xi_{P_\xi(x)}(t^\xi_x + \sigma(x)t ) \, \di t\,,\\
d^+(x,\xi)&:=\limsup_{r \searrow 0}\, -\sigma(x)\mint_{0}^{r} \hat{u}^\xi_{P_\xi(x)}(t^\xi_x-\sigma(x)t ) \, \di t\,,\\
d^-(x,\xi)&:=\liminf_{r \searrow 0}\, -\sigma(x)\mint_{0}^{r} \hat{u}^\xi_{P_\xi(x)}(t^\xi_x-\sigma(x)t ) \, \di t\,.
\end{align*}
We notice that the Borel measurability of~$c^{\pm}$ and~$d^{\pm}$ in~$R \times \mathbb{S}^{n-1}$ follows by the argument used in the proof of Lemma~\ref{l:meas10000}. We infer that the set
\begin{align*}
\bigg\{(x,\xi) \in R  \times \mathbb{S}^{n-1}: &  \, c^+(x,\xi)=c^-(x,\xi)   \in \bigg(-\frac{\pi}{2},\frac{\pi}{2}\bigg) \EEE \quad \text{and}\quad \\
& \limsup_{r \to 0^+} \, \sigma(x)\mint_{0}^{r} |\hat{u}^\xi_{P_\xi(x)}(t^{\xi}_{x} +\sigma(x)t )-c^+(x,\xi)| \, \di t=0 \bigg\}
\end{align*}
is Borel measurable in~$R \times \mathbb{S}^{n-1}$. We notice that this set coincides with the set 
\begin{align*}
\bigg\{(x,\xi) \in R  \times \mathbb{S}^{n-1}: \,
\aplim_{t \to t^{\sigma(x)}_x}\hat{u}^\xi_{P_\xi(x)}(t) = c^{+}(x, \xi)   \in \bigg(-\frac{\pi}{2},\frac{\pi}{2}\bigg) \EEE\bigg\}.
\end{align*}
For the same reason, the set
\begin{align*}
\bigg\{ (x,\xi) \in R  \times \mathbb{S}^{n-1}: \, \aplim_{t \to t^{-\sigma(x)}_x}\hat{u}^\xi_{P_\xi(x)}(t) =   d^{+} (x, \xi)  \in \bigg(-\frac{\pi}{2},\frac{\pi}{2}\bigg) \EEE \bigg\}
\end{align*}
is Borel measurable. In the same way we obtain that the functions (recall that~$u_{\xi}$ is assumed to be bounded)
\begin{align*}
f^+(x,\xi)&:=\limsup_{r \searrow 0}\mint_{{\rm B}_r(x) \cap R^+(x)} u_\xi(z) \, \di z\,,\\
f^-(x,\xi)&:=\liminf_{r \searrow 0}\mint_{{\rm B}_r(x) \cap R^+(x)} u_\xi(z) \, \di z\,,\\
g^+(x,\xi)&:=\limsup_{r \searrow 0}\mint_{{\rm B}_r(x) \cap R^-(x)} u_\xi(z) \, \di z\,,\\
g^-(x,\xi)&:=\liminf_{r \searrow 0}\mint_{{\rm B}_r(x) \cap R^-(x)} u_\xi(z) \, \di z
\end{align*}
are Borel measurable in $R  \times \mathbb{S}^{n-1}$. Hence, arguing as above it turns out that the sets
\begin{align*}
& \bigg\{ (x,\xi) \in R  \times \mathbb{S}^{n-1}: \,
\aplim_{\substack{z \to x \\ (z-x) \cdot \nu_R(x) >0 }} u_\xi(z)  = f^{+}(x, \xi)   \in \bigg(-\frac{\pi}{2},\frac{\pi}{2}\bigg) \EEE \bigg\},\\
& \bigg\{  (x,\xi) \in R  \times \mathbb{S}^{n-1}: \aplim_{\substack{z \to x \\ (z-x) \cdot \nu_R(x) <0 }} u_\xi(z)  = g^{+}(x, \xi)   \in \bigg(-\frac{\pi}{2},\frac{\pi}{2}\bigg) \EEE \bigg\}
\end{align*}
are Borel measurable. From the Borel measurability of~$c^{\pm}$, $d^{\pm}$, $f^{\pm}$ and~$g^{\pm}$ we infer that~$\Delta$ is Borel measurable in~$R \times \mathbb{S}^{n-1}$. This concludes the proof. 
\end{proof}

\section*{Acknowledgments}
The work of the authors was partially funded by the Austrian Science Fund (\textbf{FWF}) through the project P35359-N. S.A. was also supported by the FWF project ESP-61. E.T. further acknowledges the support of the FWF projects Y1292 and F65. 

\bibliographystyle{siam}
\bibliography{biblio}









\end{document}